\theoremstyle{plain}
\newtheorem{lem}{Lemma}[section]
	\newtheorem{thm}{Theorem}[section]
	\newtheorem{rem}[thm]{Remark}
    \newcommand{\nrm}[1]{\left\| #1 \right\|}
\newcommand{\bm}{\boldsymbol}
   \newcommand\dt {{\Delta t}}
\newcommand{\be}{\begin{equation}}
\newcommand{\ee}{\end{equation}}
\newcommand{\bse}{\begin{subequations}}
\newcommand{\ese}{\end{subequations}}
\def\benl{\begin{eqnarray*}}
\def\eenl{\end{eqnarray*}}
\def\be{\bm{e}}
\def\bx{\bm{x}}
\def\bmu1{\bm{\mu_1}}
\def\ln{{\rm ln}}
\newcommand{\ben}{\begin{eqnarray}}
\newcommand{\een}{\end{eqnarray}}
\newcommand{\beq}{\begin{equation}}
\newcommand{\eeq}{\end{equation}}
\newcommand{\bea}{\begin{array}}
\newcommand{\eea}{\end{array}}
\newcommand{\bef}{\begin{figure}(H)_h}
\newcommand{\eef}{\end{figure}}
\crefname{hypothesis}{Hypothesis}{Hypotheses}
\title{Unique solvability and error analysis of the Lagrange multiplier approach for gradient flows}
\author{
Qing Cheng \thanks{Department of Mathematics, Tongji University, Shanghai 200092, China (qingcheng$@$tongji.edu.cn). Key Laboratory of Intelligent Computing and Applications (Tongji University), Ministry of Education, China. }
\and		
Jie Shen
\thanks{Eastern Institute of Technology, Ningbo, Zhejiang 315200, P. R. China (jshen@eitech.edu.cn)}
\and
Cheng Wang\thanks{Department of Mathematics, University of Massachusetts, North Dartmouth, MA  02747, USA (cwang1@umassd.edu)}
}
\begin{document}
\graphicspath{ {Figures/} }
\maketitle

\begin{abstract}
The unique solvability and error analysis of the original  Lagrange multiplier approach proposed in \cite{ChengQ2020a} for gradient flows is studied in this paper. We identify a necessary and sufficient  condition that  must be satisfied for the nonlinear algebraic equation arising from  the original  Lagrange multiplier approach  to admit a unique solution in the neighborhood of its exact solution, and propose a modified Lagrange multiplier approach so that the computation can continue even if the aforementioned condition is not satisfied. Using Cahn-Hilliard equation as an example, we prove rigorously the unique solvability  and establish optimal error estimates of a second-order Lagrange multiplier scheme assuming this condition and that the time step is sufficient small. We also present numerical results to demonstrate that the  modified Lagrange multiplier approach is much more robust and can use much larger time step than the original  Lagrange multiplier approach.

\end{abstract}

\begin{keywords}
gradient flow; unique solvability; Lagrange multiplier approach; energy stable; error analysis
\end{keywords}

\begin{AMS}
65M70; 65K15; 65N22
\end{AMS}

\section{Introduction}

We consider the numerical approximation of a general gradient flow given by
	\begin{equation}\label{gflow}
		\partial_t \phi=-\mathcal{G}\frac{\delta E}{\delta \phi},
	\end{equation}
	where $E(\phi)=\frac 12(\mathcal{L}^{1/2}\phi,\mathcal{L}^{1/2}\phi)+(F(\phi),1)$, with $\mathcal{L}$ and $\mathcal{G}$ being positive definite operators in a suitable Hilbert space with inner product $(\cdot,\cdot)$, and $F(\phi)$ is a nonlinear potential function. An important property of \eqref{gflow} is an associated energy dissipation law:
	\begin{equation}\label{diss}
		\frac{d}{dt}E =-(\mathcal{G}\frac{\delta E}{\delta \phi}, \frac{\delta E}{\delta \phi}).
	\end{equation} 
	It  is highly desirable to design numerical schemes which can satisfy a discrete version of \eqref{diss}.
	
	In recent years, a great deal of efforts have been devoted to construct efficient and accurate energy dissipative schemes for various gradient flows in the form of  \eqref{gflow}; we refer to \cite{bao2004computing, du2008numerical, du2006simulating, MR4458898, MR4450101, shen19} and the references therein for  more details. For example, the convex splitting method~\cite{aristotelous13b, chen19b, Dong2021a, Dong2022a, dong19b, elliott93, eyre98, Yuan2021a}, which treats the convex part implicitly and the concave part explicitly, ensures the unique solvability and unconditional energy stability at a theoretical level. On the other hand, the price of this numerical approach is associated with a nonlinear solver at each time step, due to the fact that the nonlinear terms in the gradient flow usually correspond to a convex energy. Moreover, some higher order versions of this approach, in both the second and third accuracy orders, have been extensively studied~\cite{cheng2019a, cheng2022a, cheng16a, diegel16, guo16, guo21, yan18}, and the stability analysis for a modified energy functional, composed of the original free energy and a few numerical correction terms, has been reported. Again, a nonlinear solver has to be implemented in these higher order energy stable schemes, which has always been a huge numerical challenge. 
	
	To overcome the difficulty associated with a nonlinear solver in the numerical implementation, many linear approach efforts have been made for various gradient flows. In particular, the stabilization method is applied to the Cahn-Hilliard equation~\cite{LiD2017, LiD2017b, LiD2016a, shen10a}, in which an artificial regularization term is added to ensure the energy stability, either in terms of the original free energy, or a modified energy functional, usually under a global Lipshitz condition on the nonlinear part of the free energy. On the other hand, the IEQ approach proposed in \cite{YANG2017_3phase} gives a linear, and unconditionally energy stable (with respect to a modified energy) scheme, but it  requires solving a linear system with variable coefficients. The original scalar auxiliary variable (SAV) approach proposed in \cite{shen18a} leads to a linear, decoupled and unconditionally energy stable (with respect to a modified energy) scheme, which is very efficient and easy to implement, while it is not energy dissipative with respect to the original energy. On the other hand, the original Lagrange multiplier approach proposed in \cite{ChengQ2020b} leads to a linear, decoupled and unconditionally energy stable (with respect to the original energy) scheme, combined with a nonlinear algebraic equation for the Lagrange multiplier. In principle, this approach has essentially all the desired attributes for solving gradient flows; however, it is not clear whether the nonlinear algebraic equation admits a unique solution in the desired range.
	
	 Although there are ample numerical results indicating that the original Lagrange multiplier approach works well in many applications, but  there are cases where exceedingly small time steps are needed or  one is unable to find a suitable solution of this  nonlinear algebraic equation  \cite{MR4242420, ChengQ2020a}. Therefore, it is very important to identify condition(s) which can ensure the unique solvability, and modify the original Lagrange multiplier approach so that the computation can continue even if theses condition(s) are not satisfied.

We observe from  \eqref{gflow-SAVN-3}, which is the last equation in the   original Lagrange multiplier scheme \eqref{gflow-SAVN-1}-\eqref{gflow-SAVN-3}, that the Lagrange multiplier $\eta^{n+1/2}$ can not be uniquely determined if $(F(\phi^{n+1})-F(\phi^n),1)=0$. Therefore, it is necessary to make the following
assumption on  the exact solution: 
\begin{equation} 
 S_n:=	\frac{d}{dt}   \int_\Omega \, F (\Phi)  \, d \bx|_{t=t^n}\ne 0.
	\label{assumption-H-0} 
\end{equation} 
This assumption, of course, can not be satisfied  {\it a priori}  for the whole time interval.  Hence, it is necessary to modify the  Lagrange multiplier approach  so that the computation can continue even if \eqref{assumption-H-0}  is not satisfied at some time.

The main purpose of this work is to take the Cahn-Hilliard equation as an example of gradient flow to study the unique solvability of the nonlinear algebraic equation in the original Lagrange multiplier approach, and to carry out its error analysis. The main contributions of this work include:
\begin{itemize}
	\item We prove rigorously that if the assumption \eqref{assumption-H-0} is satisfied, then the original  Lagrange multiplier  scheme  \eqref{scheme-SAVN-1}-\eqref{scheme-SAVN-2} admits a unique solution in the interval $[1-\Delta t^{1/2}, 1-\Delta t^{1/2}]$.
	\item We propose a modified Lagrange multiplier approach to deal with the case when $|S_n|\ll 1$, and provide numerical results to show that the modified Lagrange multiplier approach is much more robust and can use much larger time steps than the original  Lagrange multiplier approach.
	\item Given a tolerance $\gamma$, and assume $|S_n|\ge \gamma$ in the time interval $[0,T]$, we  carry out an error analysis and establish optimal error estimates under the condition that the time step $\Delta t\le (\frac{\gamma}{4})^4$.
\end{itemize}
In fact, the unique solvability of the nonlinear algebraic equation in the  Lagrange multiplier approach turns out to be very challenging. First of all, it is observed that the implicit part of the numerical scheme \eqref{scheme-SAVN-1}-\eqref{scheme-SAVN-2} does not correspond to a globally monotone functional in terms of the numerical solution at the next time step. To overcome this difficulty, we have to apply certain local analysis technique to obtain the unique solvability, viewed as a perturbation of the exact solution at each time step. To achieve this goal, an {\it a-priori}  assumption has to be made at the previous time step, in terms of the convergence estimate. With the help of this {\it a-priori} assumption, the unique solvability can then be carefully proved under the  assumption  \eqref{assumption-H-0} on the exact solution. Subsequently, to recover the   {\it  priori}  assumption in the unique solvability analysis, we derive an optimal rate convergence analysis at the next time step. By a  mathematical induction argument, we are able to complete  the proof of unique solvability and error analysis. 

	The rest of this paper is organized as follows. In Section~\ref{sec:numerical scheme},  we recall a  second order  scheme for the gradient flows using the original Lagrange multiplier approach, and then present a modified   Lagrange multiplier approach to deal with the case when   \eqref{assumption-H-0} is not satisfied.  A numerical example is given to validate the efficiency of the improved Lagrange multiplier approach.  In Section~\ref{sec:solvability}, we consider Cahn-Hilliard equation as an example and establish the unique solvability of the nonlinear system of algebraic equations, under an {\it a-priori}  assumption on the previous time step. Afterward, an error analysis is presented in Section~\ref{sec:convergence}, and the {\it a-priori}  assumption is theoretically recovered. Finally, we provide some concluding remarks in Section~\ref{sec:conclusion}.

\section{The Lagrange multiplier approach and a modified version}  \label{sec:numerical scheme} 
We denote $\mu = \frac{\delta E}{\delta \phi}= \mathcal{L}\phi +  F'(\phi)$. A second order modified Crank-Nicolson scheme  has been proposed for the general gradient flow \eqref{gflow}, based on  the original Lagrange multiplier approach \cite{ChengQ2020a}:  
\begin{eqnarray} 
  && 
  \frac{\phi^{n+1} - \phi^n}{\dt} =-\mathcal{G}\mu^{n+1},   
   \label{gflow-SAVN-1}   
\\
  &&
    \mu^{n+1}
  =  \mathcal{L}(\frac 34\phi^{n+1}+\frac 14 \phi^{n-1}) + \eta^{n+1/2} F' (\phi^{*,n}),  
  \label{gflow-SAVN-2}   
  \\
  &&
  \Big( F (\phi^{n+1} ) - F (\phi^n) , 1 \Big) 
  = \eta^{n+1/2}  \Big( F' (\phi^{*,n}) ,  \phi^{n+1} - \phi^n \Big),
  \label{gflow-SAVN-3}   
\end{eqnarray} 
where $\phi^{*,n}=\frac32\phi^n-\frac12\phi^{n-1},\; \eta^{n+1/2}=\frac{\eta^{n+1}+\eta^n}{2}$. The energy  stability result for the above scheme \eqref{gflow-SAVN-1}-\eqref{gflow-SAVN-3} could be established, following a similar idea as in \cite{ChengQ2020a}; see the following theorem. 

 \begin{thm}  
 The numerical scheme \eqref{gflow-SAVN-1}-\eqref{gflow-SAVN-3} is unconditional stable and  satisfies the following energy dissipative law
 \begin{equation}
 \frac{E^{n+1}-E^n}{\delta t} = -(\mathcal{G}\mu^{n+1}, \mu^{n+1})-(\mathcal{L}(\phi^{n+1}-2\phi^n+\phi^{n-1}), \phi^{n+1}-2\phi^n+\phi^{n-1}),
 \end{equation}
 where the energy $E^{n+1}$ is defined as 
 \begin{equation}
 E^{n+1}= \frac 12 (\mathcal{L}\phi^{n+1},\phi^{n+1}) +\frac 18(\mathcal{L}(\phi^{n+1}-\phi^n), \phi^{n+1}-\phi^n) + (F(\phi^{n+1}),1).
 \end{equation}
 \end{thm}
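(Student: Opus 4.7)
The plan is to mimic the standard energy-dissipation derivation for Crank--Nicolson--type gradient flow schemes, using the Lagrange multiplier constraint \eqref{gflow-SAVN-3} to handle the nonlinear term. First, I would test \eqref{gflow-SAVN-1} against $\Delta t\,\mu^{n+1}$ to obtain
\begin{equation*}
(\phi^{n+1}-\phi^n,\mu^{n+1}) \;=\; -\Delta t\,(\mathcal{G}\mu^{n+1},\mu^{n+1}).
\end{equation*}
Next, I would substitute the expression for $\mu^{n+1}$ from \eqref{gflow-SAVN-2}, which splits the left-hand side into a linear piece $(\phi^{n+1}-\phi^n,\mathcal{L}(\tfrac34\phi^{n+1}+\tfrac14\phi^{n-1}))$ and a nonlinear piece $\eta^{n+1/2}(F'(\phi^{*,n}),\phi^{n+1}-\phi^n)$.

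The nonlinear piece is handled immediately by the Lagrange multiplier relation \eqref{gflow-SAVN-3}, which replaces it by the clean energy difference $(F(\phi^{n+1})-F(\phi^n),1)$. This is precisely the point of introducing the multiplier $\eta^{n+1/2}$: it absorbs the Taylor truncation error of the extrapolated $F'(\phi^{*,n})$ against $\phi^{n+1}-\phi^n$ and turns it into an exact difference of the original free energy, which is what distinguishes the present approach from standard IEQ/SAV schemes.

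The core algebraic step is the linear piece. I would rewrite
\begin{equation*}
\tfrac34\phi^{n+1}+\tfrac14\phi^{n-1} \;=\; \tfrac12(\phi^{n+1}+\phi^n) + \tfrac14(\phi^{n+1}-2\phi^n+\phi^{n-1}),
\end{equation*}
so that testing against $\phi^{n+1}-\phi^n$ with $\mathcal{L}$ self-adjoint produces, via the identity $(a-b,\mathcal{L}(a+b))=(\mathcal{L}a,a)-(\mathcal{L}b,b)$, the difference $\tfrac12[(\mathcal{L}\phi^{n+1},\phi^{n+1})-(\mathcal{L}\phi^n,\phi^n)]$ from the first term. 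For the second term, setting $x=\phi^{n+1}-\phi^n$ and $y=\phi^n-\phi^{n-1}$ and using the polarization identity $(x,\mathcal{L}(x-y))=\tfrac12(\mathcal{L}x,x)-\tfrac12(\mathcal{L}y,y)+\tfrac12(\mathcal{L}(x-y),x-y)$ yields the telescoping $\tfrac18[(\mathcal{L}x,x)-(\mathcal{L}y,y)]$ that combines with the $\tfrac18(\mathcal{L}(\phi^{n+1}-\phi^n),\phi^{n+1}-\phi^n)$ correction in $E^{n+1}$, plus the numerical dissipation term $(\mathcal{L}(\phi^{n+1}-2\phi^n+\phi^{n-1}),\phi^{n+1}-2\phi^n+\phi^{n-1})$ appearing on the right-hand side.

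Collecting all pieces and dividing by $\Delta t$ gives the claimed identity, and unconditional stability follows since $\mathcal{G}$ and $\mathcal{L}$ are positive definite, so both dissipation contributions on the right are non-positive. The only delicate point is the bookkeeping of the factors $\tfrac12,\tfrac14,\tfrac18$ in the polarization step; aside from this, the argument is a direct translation of the proof in \cite{ChengQ2020a}, and no hypothesis beyond self-adjointness and positivity of $\mathcal{L},\mathcal{G}$ is needed---in particular, the solvability condition \eqref{assumption-H-0} plays no role here, as energy stability is a formal consequence of the scheme whenever a solution $(\phi^{n+1},\eta^{n+1/2})$ exists.
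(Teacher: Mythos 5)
Your proposal is correct and follows essentially the same route as the paper: test \eqref{gflow-SAVN-1} against $\mu^{n+1}$, substitute \eqref{gflow-SAVN-2}, absorb the nonlinear term via the multiplier relation \eqref{gflow-SAVN-3}, and reduce the linear term with exactly the identity \eqref{in:e:stable} (which you derive by decomposition and polarization rather than quoting). The only caveat, shared with the paper's own statement, is the bookkeeping you flag: carrying the constants through gives the second dissipation term with a factor $\tfrac{1}{8\Delta t}$ rather than $1$.
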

 \begin{proof}
 Taking inner product of \eqref{gflow-SAVN-1}, \eqref{gflow-SAVN-2} with $\mu^{n+1}$, $\frac{\phi^{n+1} - \phi^n}{\dt}$ respectively, we obtain
 \begin{equation}\label{stable:1}
  ( \frac{\phi^{n+1} - \phi^n}{\dt}, \mu^{n+1} ) =-(\mathcal{G}\mu^{n+1}, \mu^{n+1}),
 \end{equation}
 and 
 \begin{equation}\label{stable:2}
  ( \frac{\phi^{n+1} - \phi^n}{\dt}, \mu^{n+1} ) = (\mathcal{L}(\frac 34\phi^{n+1}+\frac 14 \phi^{n-1}) + \eta^{n+1/2} F' (\phi^{*,n}), \frac{\phi^{n+1} - \phi^n}{\dt}). 
 \end{equation}
Notice the equality
\begin{equation}\label{in:e:stable}
\begin{split}
& (\mathcal{L}(\frac 34\phi^{n+1}+\frac 14 \phi^{n-1}), \phi^{n+1}-\phi^n)= \frac 18\{(\mathcal{L}(\phi^{n+1}-\phi^n), \phi^{n+1}-\phi^n)- (\mathcal{L}(\phi^{n}-\phi^{n-1}), \phi^{n}-\phi^{n-1}) \\&+ (\mathcal{L}(\phi^{n+1}-2\phi^n+\phi^{n-1}), \phi^{n+1}-2\phi^n+\phi^{n-1})\}+\frac 12\{ (\mathcal{L}\phi^{n+1},\phi^{n+1})- (\mathcal{L}\phi^{n},\phi^{n})\}.
 \end{split}
\end{equation} 
 Combining  \eqref{stable:1} and \eqref{stable:2}   and using \eqref{gflow-SAVN-3} and \eqref{in:e:stable}, we derived the desired  energy dissipative law. 
 \end{proof}

	 The implementation process of the above scheme can be outlined as follows. First, we define
\begin{equation}\label{phimu}
	 \phi^{n+\frac 12}=\phi_1^{n+\frac 12}+\eta^{n+\frac 12}\phi_2^{n+\frac 12},\quad \mu^{n+\frac 12}=\mu_1^{n+\frac 12}+\eta^{n+\frac 12}\mu_2^{n+\frac 12}.
\end{equation} 
Substituting $(\phi^{n+\frac 12},\mu^{n+\frac 12})$ into  \eqref{gflow-SAVN-1}-\eqref{gflow-SAVN-3}, we are able to obtain $\phi_1^{n+\frac 12}$ and $\phi_2^{n+\frac 12}$ from the following two linear systems
\begin{equation}\label{g:1}
\frac{\phi_1^{n+\frac 12} - \phi^n}{\dt} =-\mathcal{G}\mu_1^{n+\frac 12}, \quad 
\mu_1^{n+\frac 12} =  \mathcal{L}(\frac 34\phi_1^{n+1} +\frac 14 \phi^{n-1}),
\end{equation}
and 
\begin{equation}\label{g:2}
\frac{\phi_2^{n+\frac 12}}{\dt} =-\mathcal{G}\mu_2^{n+\frac 12},\quad
\mu_2^{n+\frac 12} =  \mathcal{L}(\frac 34\phi_2^{n+1})  + F' (\phi^{*,n}). 
\end{equation}
It is obvious that $\phi_1^{n+\frac 12}$ and $\phi_2^{n+\frac 12}$ can be solved uniquely from \eqref{g:1}-\eqref{g:2}. Subsequently, a substitution of \eqref{phimu} into \eqref{gflow-SAVN-3} leads to the following 
 nonlinear algebraic equation for $\eta^{n+1/2}$:
\begin{equation}\label{g:nonlinear-2}
g_n(\eta):= \int_\Omega \, \Big( F (\phi_1^{n+\frac 12} + \eta  \phi_2^{n+\frac 12}) - F (\phi^n)  \Big) \, d \bx 
  - \eta  \int_\Omega \, F' (\phi^{*,n}) (\phi_1^{n+\frac 12} + \eta  \phi_2^{n+\frac 12} - \phi^n) \, d \bx = 0.  
\end{equation}
As mentioned in the introduction, \eqref{g:nonlinear-2}  may not be uniquely solvable near $\eta=1$ if the assumption \eqref{assumption-H-0} is not satisfied. Hence, we need to modify the  Lagrange multiplier approach so that the computation can continue even if \eqref{assumption-H-0} is not satisfied. 

Below we introduce a modified Lagrange multiplier approach for gradient flow \eqref{gflow}. For a given  tolerance $\gamma\ll 1$, we proceed as follows.

We first compute  $\phi_1^{n+\frac 12}$ and $\phi_2^{n+\frac 12}$ from  \eqref{g:1} and \eqref{g:2}, respectively. Then, we set $\tilde \phi^{n+1}=\phi_1^{n+\frac 12} +   \phi_2^{n+\frac 12}$. If $e_{n+1}= |\frac{\int_{\Omega}F(\tilde \phi^{n+1})-F(\phi^n)dx}{\Delta t}|>\gamma$, we continue with the original Lagrange multiplier approach; otherwise 
we set $\eta^{n+1/2}=1$  and  $\phi^{n+1}=\tilde \phi^{n+1}$. 

More precisely, the modified Lagrange multiplier algorithm is outlined below. 

\medskip
\noindent{\bf Modified Lagrange multiplier approach:}\\
\rule[4pt]{14.3cm}{0.05em}\\
\textbf{Given}  Solutions at time steps $n$ and $n-1$; parameters $\gamma$, and the preassigned  time step $\Delta t$.
\begin{description}
	\item[Step 1] Compute  $\phi_1^{n+\frac 12}$ and $\phi_2^{n+\frac 12}$ from  \eqref{g:1} and \eqref{g:2}, and set $\tilde \phi^{n+1}=\phi_1^{n+\frac 12} +   \phi_2^{n+\frac 12}$.
	\item[Step 2] Calculate $e_{n+1}= \frac{\int_{\Omega}F(\tilde\phi^{n+1})-F(\phi^n)dx}{\Delta t} $.
	\item[Step 3] \textbf{if} $e_{n+1}>\gamma$, \textbf{then}
	\item[Step 4] Determine $\eta^{n+1/2}$ from \eqref{g:nonlinear-2}.
	\item[Step 5] \textbf{else}\\
Set $\phi^{n+1}=\tilde \phi^{n+1}$.
	\item[Step 6] \textbf{endif}
\end{description}
\rule[12pt]{14.3cm}{0.05em}\\

We now provide a numerical example to demonstrate the effectiveness of this modified approach. We consider the Cahn-Hilliard equation 
\begin{equation}
\phi_t  +  \varepsilon^2\Delta^2\phi -\Delta ((\phi^2-1)\phi)= 0 , 
\end{equation}
with the initial condition 
\begin{equation}\label{ini:3d}
	u(t=0)=0.3+0.01\,{\rm rand}(x,y), \quad 
	\mbox{${\rm rand}(x,y)$: uniform random distribution in $[-1,1]$} . 
\end{equation}
The interface width parameter is set to be $\varepsilon^2=0.06$. 

In Figure~\ref{Fig:1}(a-c), we plot the evolution of  energy, iterations and $\eta$ with respect to time by using the modified Crank-Nicolson scheme based on the original Lagrange multiplier approach. We observe that even with a very small time step $\Delta t=10^{-7}$, the scheme based on the original Lagrange multiplier approach failed to converge at about $t=1.65\times 10^{-3}$.  In Figure~\ref{Fig:1}(d-f), we plot the evolution of  energy, iterations and $\eta$ with respect to time by using the  scheme based on the modified Lagrange multiplier approach with $\gamma=\Delta t$ and $\Delta t=10^{-3}$. For the sake of comparison, we also plot the energy evolution by using a second-order SAV scheme with $\Delta t=10^{-5}$. We observe from Figure~\ref{Fig:1}(d) that the energy curves obtained by both methods overlap and decrease monotonically, indicating that the modified Lagrange multiplier approach leads to correct results ever at a relatively large time step  $\Delta t=10^{-3}$. We also observe from Figure~\ref{Fig:1}(e) that the modified approach is activated (i.e., $e_{n+1}\le \gamma$) in a large time interval, while only one iteration is needed for solving the nonlinear algebraic equation \eqref{g:nonlinear-2} when $e_{n+1}\ge \gamma$. These results indicate that the modified Lagrange multiplier approach is very effective.

	 \begin{figure}[htbp]
	\centering
	\subfigure[Energy with $\Delta t=10^{-7}$ ]{
	\includegraphics[width=0.25\textwidth,clip==]{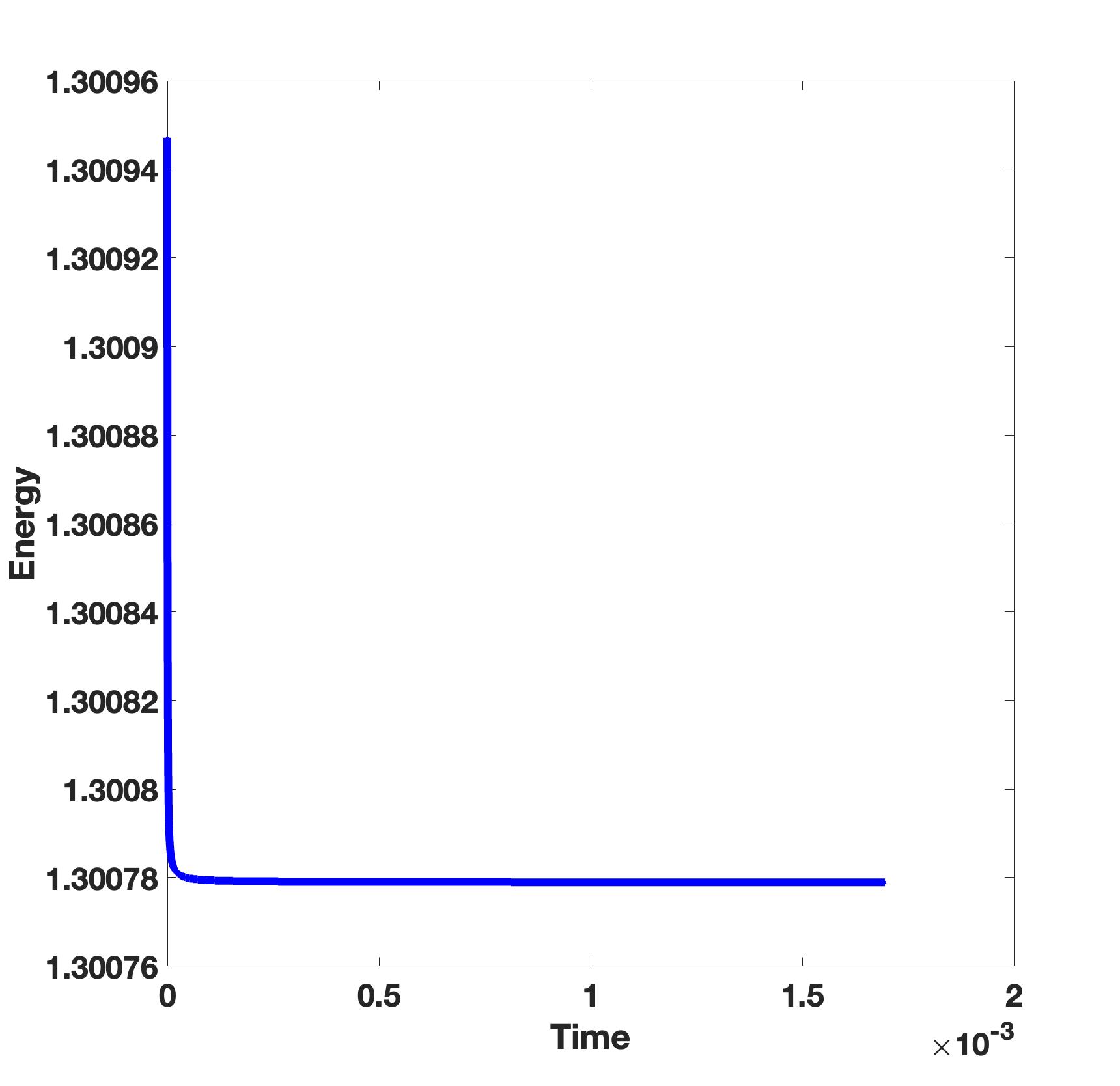}}
	\subfigure[Iterations with  $\Delta t=10^{-7}$]{
	\includegraphics[width=0.25\textwidth,clip==]{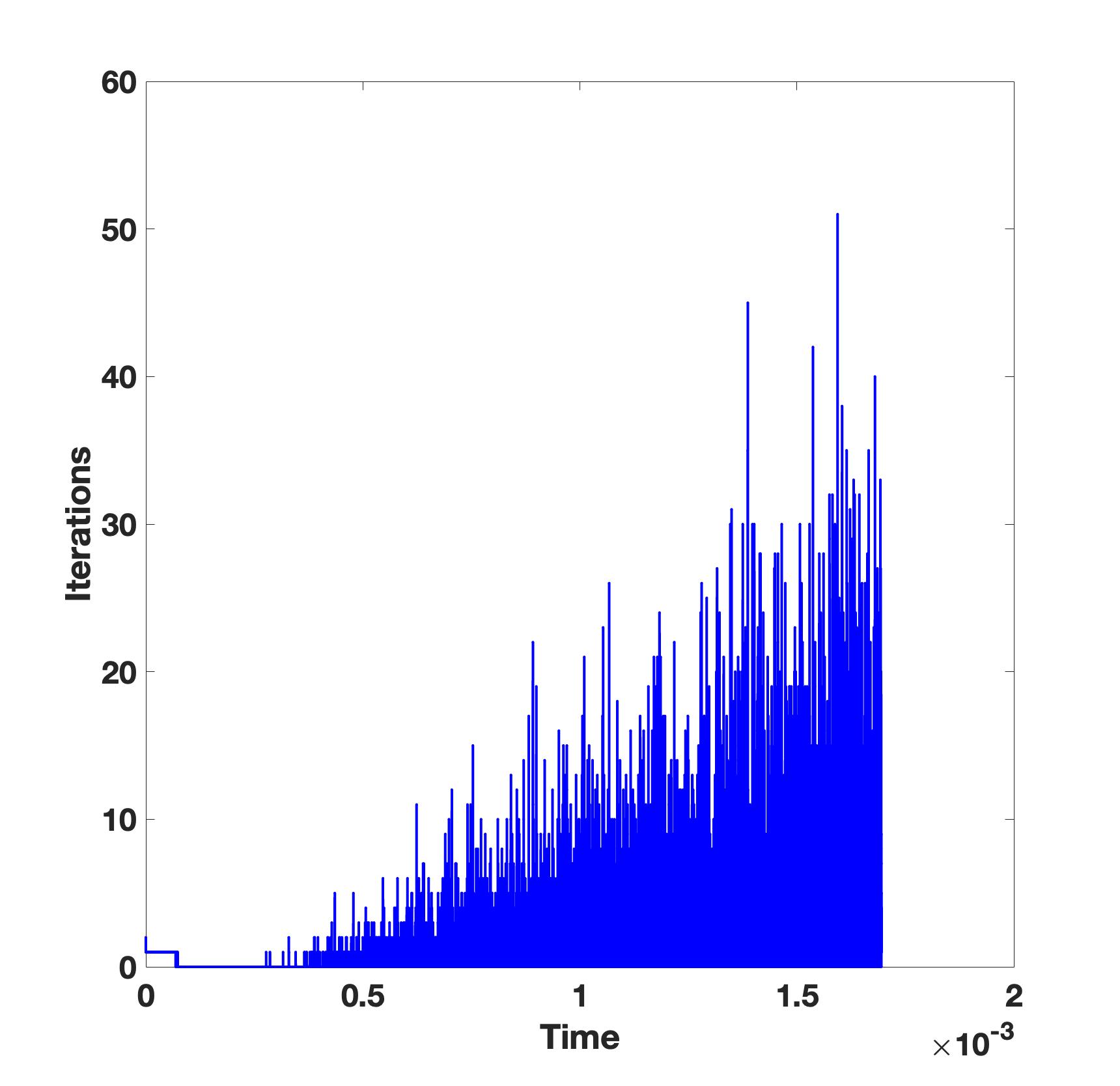}}
	\subfigure[$\eta$ with $\Delta t=10^{-7}$]{
	\includegraphics[width=0.25\textwidth,clip==]{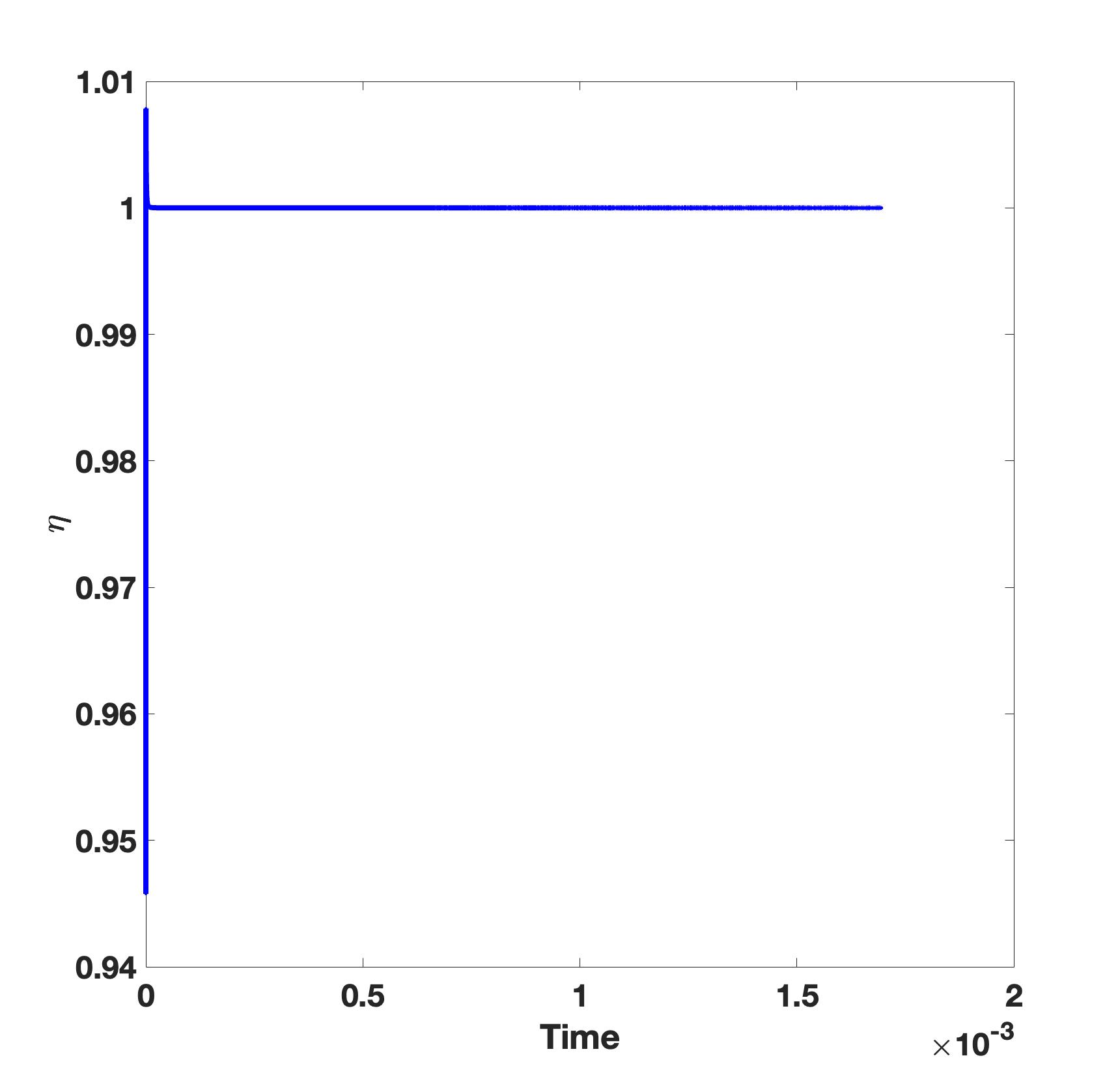}}
	\subfigure[Energy with  $\Delta t=10^{-3}$]{
	\includegraphics[width=0.25\textwidth,clip==]{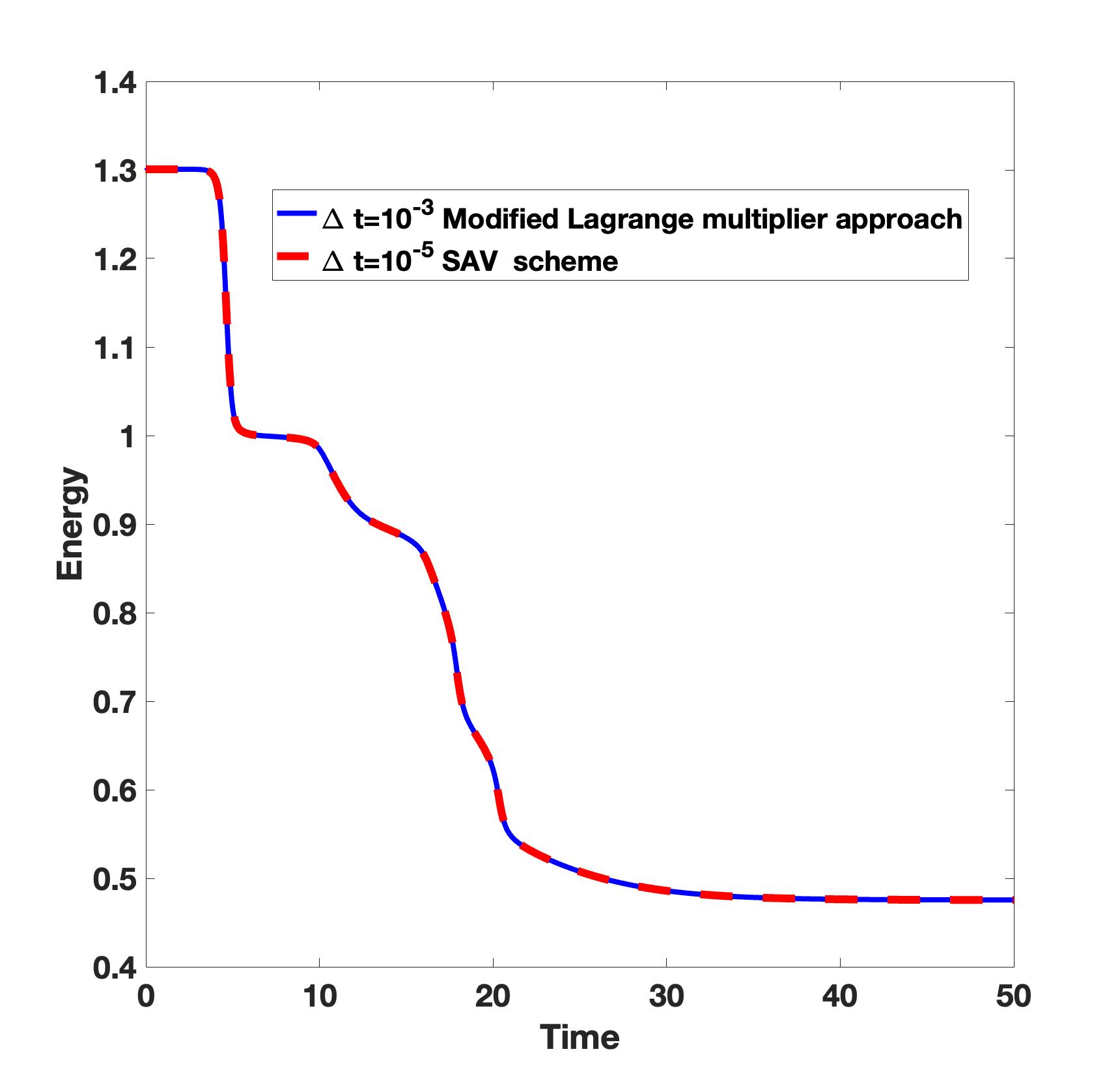}}
	\subfigure[Iterations with  $\Delta t=10^{-3}$]{
	\includegraphics[width=0.25\textwidth,clip==]{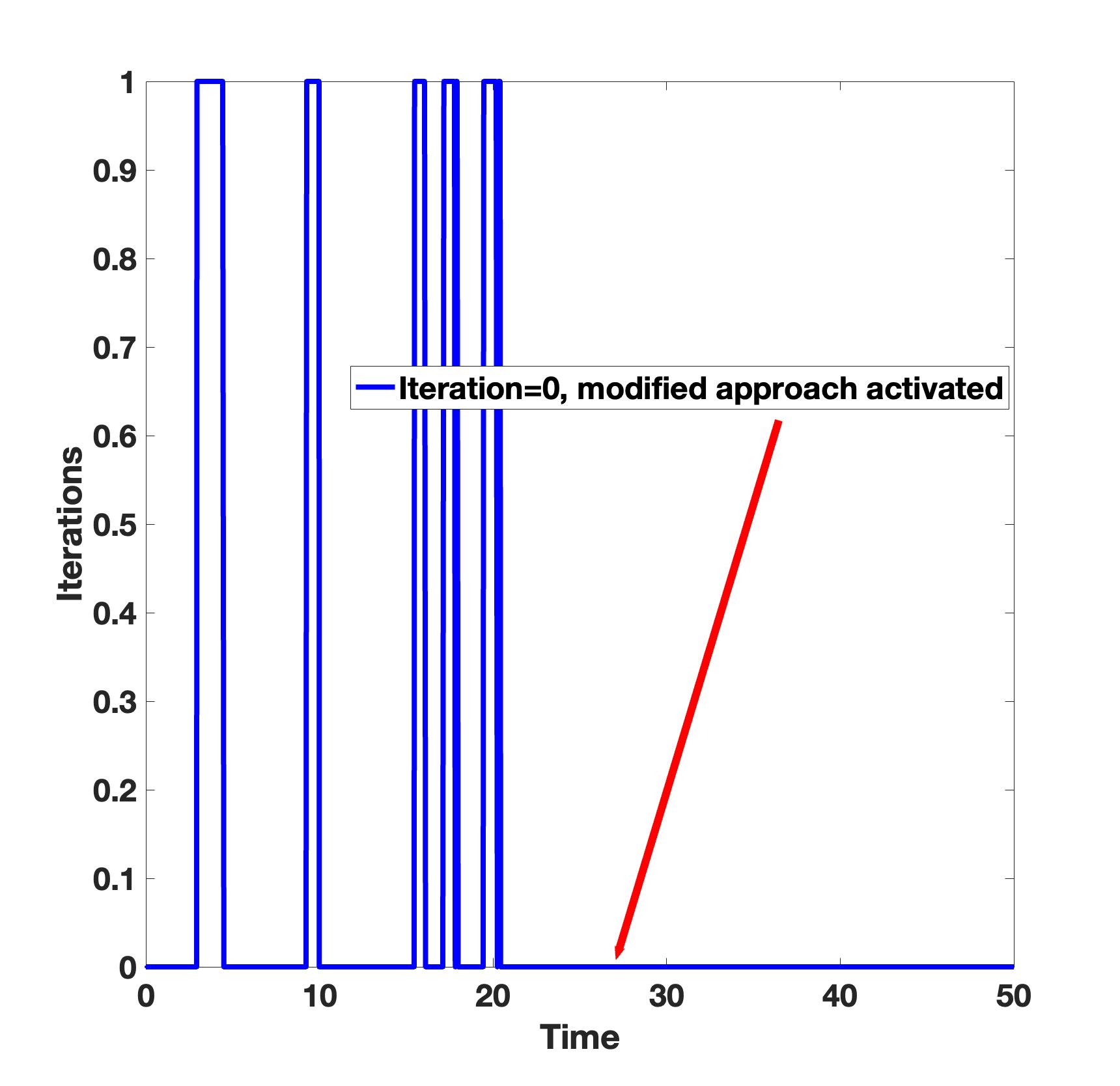}}
	\subfigure[$\eta$ with $\Delta t=10^{-3}$]{
	\includegraphics[width=0.25\textwidth,clip==]{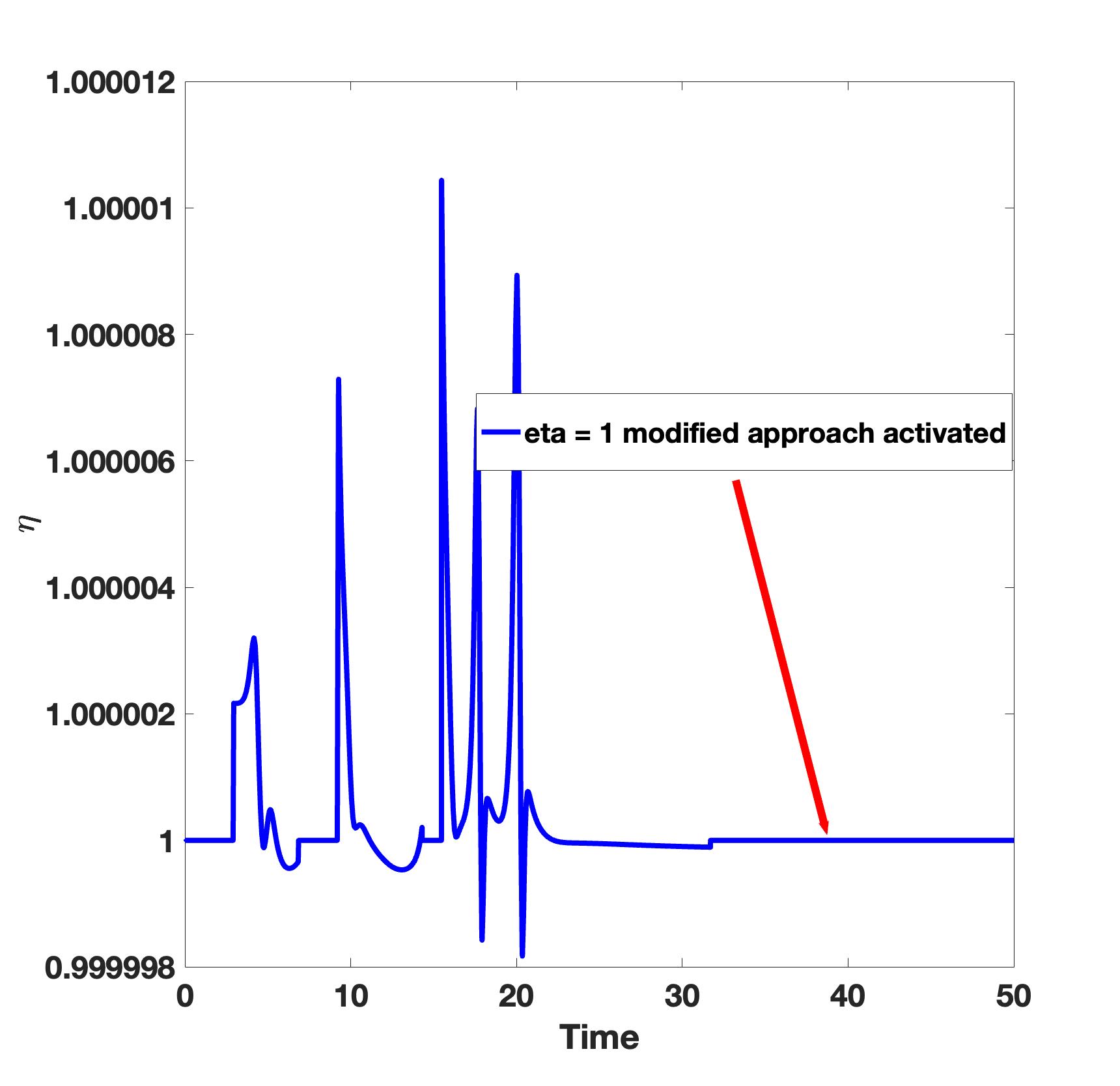}}
	\caption{ (a-c): The evolution of energy, iterations and $\eta$  using the original Lagrange multiplier approach with $\Delta t=10^{-7}$. (d-f): The evolution of energy, iterations and $\eta$  using modified Lagrange multiplier approach with $\Delta t=10^{-3}$. } \label{Fig:1}
\end{figure}

\section{The unique solvability of  \eqref{nonlinear-2}} \label{sec:solvability}
  
  In this section, we provide the unique solvability analysis of  \eqref{nonlinear-2}. To fix the idea, we  consider the Cahn-Hilliard equation, a typical  gradient flow. In this physical model, the energy functional is given by
  \begin{equation} 
  	E (\phi) := \int_\Omega \left( \frac14 \phi^4 
  	- \frac12 \phi^2 + \frac14 + \frac{\varepsilon^2}{2} |\nabla \phi|^2  
  	\right) \,\mathrm{d}\bx \ , 
  	\label{energy-CH-1} 
  \end{equation} 
where  the constant $\varepsilon >0$ stands for the interface width parameter, and $\mathcal{G}=-\Delta$. In turn, the Cahn-Hilliard equation can be written as
\begin{equation} 
	\partial_t \phi = \Delta \mu = \Delta 
	\left( \phi^3 - \phi - \varepsilon^2 \Delta \phi \right),
	\label{equation-CH}
\end{equation}  
in which $\mu$ is the chemical potential 
  \begin{equation} 
  	\mu: = \frac{\delta E}{\delta \phi} = \phi^3 - \phi - \varepsilon^2 \Delta \phi.
  	\label{chem-pot-CH}
  \end{equation} 
For simplicity, periodic boundary conditions are imposed for both the phase variable, $\phi$, and the chemical potential, $\mu$. An extension to the case of homogeneous Neumann boundary condition would be straightforward. 

In more details, the scheme \eqref{gflow-SAVN-1}-\eqref{gflow-SAVN-3} could be represented as 
\begin{eqnarray} 
  && 
  \frac{\phi^{n+1} - \phi^n}{\dt} = \Delta \Bigl( F' (\phi^{*,n}) \eta^{n+1/2} 
  - \varepsilon^2  \Delta ( \frac34 \phi^{n+1} + \frac14 \phi^{n-1} ) \Bigr) ,   
   \label{scheme-SAVN-1}   
\\
  &&
  \Big( F (\phi^{n+1} ) - F (\phi^n) , 1 \Big) 
  = \eta^{n+1/2}  \Big( F' (\phi^{*,n}) ,  \phi^{n+1} - \phi^n \Big). 
  \label{scheme-SAVN-2}   
\end{eqnarray} 
Meanwhile, the Lagrange multiplier $\eta^{n+1/2}$ is a solution of the nonlinear algebraic equation
\begin{eqnarray} 
\hskip 1cm
  g_n (\eta): = \int_\Omega \, \Big( F (p^n + \eta \dt q^n) - F (\phi^n)  \Big) \, d \bx 
  - \eta  \int_\Omega \, F' (\phi^{*,n}) (p^n + \eta \dt q^n - \phi^n) \, d \bx = 0.  
  \label{nonlinear-2}   
\end{eqnarray} 
Setting $	\phi^{n+1} = p^n + \eta^{n+1/2} \dt q^n$ and plugging it into \eqref{scheme-SAVN-1}, we find that 
\begin{eqnarray} 
  &&  
	p^n = ( I + \frac34 \varepsilon^2 \dt \Delta^2 )^{-1}  
	( \phi^n - \frac14 \varepsilon^2 \dt \Delta^2  \phi^{n-1} ) ,  \label{nonlinear-1-1}   
	\\
	&& 
	q^n = ( I + \frac34 \varepsilon^2 \dt \Delta^2 )^{-1}  \Delta F' (\phi^{*,n})  .    
	\label{nonlinear-1-2} 
\end{eqnarray}

\subsection{The main result}
We aim to provide a theoretical analysis for the nonlinear algebraic equation \eqref{nonlinear-2}, by making use of certain localized estimates.     
The numerical error function is defined as 
\begin{equation} 
	e^k := \Phi^k - \phi^k ,  \quad \forall k \ge 0 ,  \label{error function-1} 
\end{equation} 
in which $\Phi^k$ is the exact solution to the original PDE (\ref{equation-CH}), and $\phi^k$ is denoted as the numerical solution of (\ref{scheme-SAVN-1})-(\ref{scheme-SAVN-2}). With an initial data with sufficient regularity, it is assumed that the exact solution $\Phi$ has regularity of class $\mathcal{R}$: 
\begin{equation}
	\Phi \in \mathcal{R} := C^3 (0,T; C^2) \cap C^2 (0,T; C^6) \cap L^\infty (0,T; H^8).
	\label{assumption:regularity.1}
\end{equation}
In turn, the following functional bounds are available for the exact solution: 
\begin{equation} 
	\| \partial_t^3 \Phi \|_{L^\infty (0,T^*; C^2) } 
	+ \| \Phi_{tt} \|_{L^\infty (0,T^*;  C^6) }  
	+ \| \Phi_t \|_{L^\infty (0,T^*;  C^6) }  \le A^* ,  \, \, \, 
	\| \Phi^k \|_{H^8} \le A^* , \, \, \forall k \ge 0 .  
	\label{exact-bound-1}
\end{equation}

To proceed with the nonlinear analysis for (\ref{nonlinear-2}), we begin with the following  {\it a priori}  assumption for the numerical error at the previous time steps: 
\begin{equation} 
	\| e^k \|_{H^4} \le \dt^{3/2} ,   \quad \| e^k \|_{H^6} \le \dt ,  \quad 
 \forall k \le n .  \label{a priori-1} 
\end{equation} 
In turn, the following $H^4$ and $H^6$ bounds are valid for the numerical solution at the previous time steps: 
\begin{eqnarray} 
	\| \phi^k \|_{H^6} \le \| \Phi^k \|_{H^6} + \| e^k \|_{H^4} \le A^* + \dt  
	\le A^* +1 := \tilde{A}_1,  \quad 
	\mbox{for} \, \, \, k= n, n-1 .  \label{a priori-2} 
\end{eqnarray} 
In particular, by the fact that $\phi^{*,n} = \frac32 \phi^n - \frac12 \phi^{n-1}$, we have the following   {\it a-priori}  bounds for $\phi^{*,n}$: 
\begin{eqnarray} 
	\| \phi^{*,n}  \|_{H^2} , \, \,  \| \phi^{*,n}  \|_{H^4} , \, \, \| \phi^{*,n} \|_{H^6} 
	\le 2 \tilde{A}_1 ,  \quad  
	\| \phi^{*,n}  \|_{L^\infty} \le C \tilde{A}_1  . 
	\label{a priori-3}  
\end{eqnarray} 

The  {\it a priori}  assumption (\ref{a priori-1}) will be recovered by the convergence estimate at the next time step, as will be demonstrated in the later analysis. 

The following theorem is the main result of this section. 

\begin{thm} \label{thm: solvability} 
  Suppose the exact solution $\Phi$ for the Cahn-Hilliard equation (\ref{equation-CH}), of regularity class $\mathcal{R}$, satisfies the  assumption (\ref{assumption-H-0}) with $S_n \ge 4\dt^{\frac{1}{4}}$. We also make the {\it a-priori}  assumption (\ref{a priori-1}). Then the nonlinear scalar equation (\ref{nonlinear-2})  has a unique solution in  $[1 - S_n^2/16, 1 + S_n^2/16]$. 
\end{thm}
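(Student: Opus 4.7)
The plan is a local analysis of $g_n(\eta)$ near $\eta = 1$, exploiting the lower bound $S_n \ge 4\dt^{1/4}$ to force $g_n'$ to stay bounded away from zero and $g_n$ to change sign across $[1-S_n^2/16,\,1+S_n^2/16]$. The key preliminary step is to recognize that $R := p^n + \dt\,q^n$ is the solution of the linear Crank--Nicolson scheme obtained from \eqref{scheme-SAVN-1} by fixing $\eta^{n+1/2} = 1$. Combining \eqref{a priori-1}--\eqref{a priori-3}, elliptic regularity for $(I + \tfrac34\varepsilon^2\dt\,\Delta^2)^{-1}$, the $H^8$ bound for $\Phi$ from \eqref{exact-bound-1}, and Sobolev embedding, I would first show $\|R - \Phi^{n+1}\|_{L^\infty} \le C\dt^{3/2}$, hence $R - \phi^n = \dt\,\Phi_t^{n+1/2} + O(\dt^{3/2})$ in $L^\infty$, and likewise both $\tfrac12(R+\phi^n) - \Phi^{n+1/2}$ and $\phi^{*,n} - \Phi^{n+1/2}$ are $O(\dt^{3/2})$.

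Using the Taylor identity $F(R) - F(\phi^n) = F'\bigl(\tfrac{R+\phi^n}{2}\bigr)(R-\phi^n) + O(|R-\phi^n|^3)$, the above consistency estimates give
\[
g_n(1) = \int_\Omega\Bigl[F'\bigl(\tfrac{R+\phi^n}{2}\bigr) - F'(\phi^{*,n})\Bigr](R-\phi^n)\,d\bx + O(\dt^3) = O(\dt^{5/2}),
\]
since the bracket is $O(\dt^{3/2})$ while $R-\phi^n = O(\dt)$. Direct differentiation of \eqref{nonlinear-2} yields
\[
g_n'(1) = -\int_\Omega F'(\phi^{*,n})(R-\phi^n)\,d\bx + \dt\int_\Omega q^n\bigl[F'(R)-F'(\phi^{*,n})\bigr]\,d\bx,
\]
and the consistency bound together with the definition of $S_n$ identifies the first integral as $-\dt\,S_n + O(\dt^{3/2})$ while the second is $O(\dt^2)$; thus $g_n'(1) = -\dt\,S_n + O(\dt^{3/2})$. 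Since $S_n \ge 4\dt^{1/4}$, this forces $|g_n'(1)| \ge \tfrac12\dt\,S_n$ for $\dt$ small.

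The second derivative $g_n''(\eta) = \dt^2\int(q^n)^2 F''(p^n+\eta\dt\,q^n)\,d\bx - 2\dt\int F'(\phi^{*,n})q^n\,d\bx$ satisfies $|g_n''(\eta)| \le C\dt$ uniformly on the target interval, so $g_n'(\eta) = g_n'(1) + O(\dt\,S_n^2) = -\dt\,S_n(1+o(1))$ is strictly negative throughout and $g_n$ is strictly monotone. Taylor expansion at $\eta = 1$ gives
\[
g_n(1\pm S_n^2/16) = \mp\tfrac{1}{16}\dt\,S_n^3 + O(\dt^{5/2}) + O(\dt^{3/2}S_n^2) + O(\dt\,S_n^4),
\]
and the assumption $S_n\ge 4\dt^{1/4}$ makes $\dt\,S_n^3$ dominate each remainder for $\dt$ small, so the two endpoint values have opposite signs. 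The intermediate value theorem combined with strict monotonicity then produces a unique root in the open interval.

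The main obstacle is the consistency bound $\|R - \Phi^{n+1}\|_{L^\infty} \le C\dt^{3/2}$: this requires carefully combining the $H^4$/$H^6$ control of $e^{n-1},e^n$ from \eqref{a priori-1}, elliptic regularity for $(I+\tfrac34\varepsilon^2\dt\,\Delta^2)$, and the $H^8$ smoothness of $\Phi$ to handle both the Crank--Nicolson truncation and the nonlinear discrepancy $F'(\phi^{*,n}) - F'(\Phi^{*,n})$. A further delicate point is accounting for constants so that every remainder in the expansion of $g_n(1\pm S_n^2/16)$ is strictly dominated by $\tfrac{1}{16}\dt\,S_n^3$; this is precisely where the specific exponent $1/4$ in $S_n\ge 4\dt^{1/4}$ enters, ensuring the half-width $S_n^2/16\ge \dt^{1/2}$ that matches the companion statement in the introduction.
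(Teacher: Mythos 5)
Your proposal is correct and shares the paper's overall architecture --- show $g_n(1)$ is small, show $|g_n'|$ is bounded below by roughly $|S_n|\dt$ near $\eta=1$, then conclude by strict monotonicity plus a sign change at the endpoints --- but the two key estimates are obtained by a genuinely different route. For $g_n(1)$, the paper's Lemma~\ref{lem 2} applies the mean value theorem twice to purely numerical quantities (a $\xi^{(1)}$ between $p^n+\dt q^n$ and $\phi^n$, then a $\xi^{(2)}$) and never compares $p^n+\dt q^n$ with the exact solution, settling for $|g_n(1)|\le A_{12}\dt^2$, which suffices here; your route through the consistency estimate $\|p^n+\dt q^n-\Phi^{n+1}\|_{L^\infty}\le C\dt^{3/2}$ together with the exact quartic midpoint identity yields the sharper $O(\dt^{5/2})$ (the paper only obtains an $O(\dt^3)$ refinement later, in Lemma~\ref{lem 5}, by different means). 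For the derivative, the paper's Lemma~\ref{lem 3} expands $g_n'(\eta)$ into six terms for every $\eta\in[1-\dt^{1/2},1+\dt^{1/2}]$, replaces the leading three by surrogates built from $\Phi^n$ using the self-adjointness of $(I+\frac34\varepsilon^2\dt\Delta^2)^{-1}\Delta$, and invokes the PDE identity $-\Delta F'(\Phi^n)+\varepsilon^2\Delta^2\Phi^n=-(\Phi_t)^n$ to extract $-S_n\dt$; you instead evaluate $g_n'(1)$ directly from the consistency estimate and propagate the bound across the interval via the uniform bound $|g_n''(\eta)|\le C\dt$, which is a cleaner way to handle the $\eta$-dependence than re-estimating for each $\eta$. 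Your approach buys a single unifying consistency lemma (whose proof is achievable exactly as you sketch, since $p^n+\dt q^n-\Phi^{n+1}=(I+\frac34\varepsilon^2\dt\Delta^2)^{-1}\bigl(-e^n+O(\dt^{3/2}\cdot\dt)\bigr)$ with $\|e^n\|_{H^4}\le\dt^{3/2}$); the paper's buys independence from any one-step error equation. Two caveats you share with the paper rather than introduce: the domination of the $O(\dt S_n^4)$ remainder by $\tfrac{1}{16}\dt S_n^3$ (equivalently $|g_n''|\cdot S_n^2/16\ll|g_n'(1)|$) tacitly requires $|S_n|\ll 1$, exactly as in the paper's closing step; and the sign convention $S_n\ge 4\dt^{1/4}$ versus the paper's ``without loss of generality $S_n<0$'' is an inconsistency inherited from the statement itself, resolved by reading the hypothesis as $|S_n|\ge 4\dt^{1/4}$.
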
   

We note that the numerical results presented in the last Section indicate that  the condition  $S_n \ge 4\dt^{\frac{1}{4}}$ (i.e. $\Delta t\le (S_n/4)^4$)  is most likely too pessimistic.

A few preliminary estimates are needed before we can proceed with  the proof of this theorem. 

\subsection{Some preliminary estimates }
For the sake of simplicity, we remove the dependent on $n$ from all constants ${A_k}$ below.

\begin{lem}  \label{lem 1} 
  Given $\phi^n$, $\phi^{n-1}$. Under the  assumption (\ref{assumption-H-0}) and the   {\it a-priori}  assumption (\ref{a priori-1}), we have the following estimates: 
\begin{eqnarray} 
  && 
    \| p^n - \phi^n \|_{L^\infty} \le A_1 \dt ,    \quad 
    \| p^n \|_{L^\infty} \le A_2 ,  \quad  \| q^n \|_{L^\infty} \le A_3 ,  \label{lem 1-0-1}  
\\
  && 
  \|  \phi^{*,n}  - \phi^n \|_{L^\infty} \le A_4 \dt ,  \quad  
  \|  \phi^{*,n}  \|_{L^\infty} \le A_5 ,   \quad 
  \|  \phi^{*,n}  - p^n \|_{L^\infty} \le A_6 \dt ,  \label{lem 1-0-2}  
\\
  && 
  \| \Delta \Big( F' ( p^n) - F' (\phi^n) \Bigr) \| \le A_7 \dt ,   \quad 
  \| \Delta \Big( F' ( \phi^{*,n}) - F' (\phi^n) \Bigr) \| \le A_8 \dt ,  \label{lem 1-0-3} 
\\
  &&   
  \| F' ( \phi^{*,n}) - F' (\phi^n) \|_{L^\infty} \le A_9 \dt , \quad 
  \| F' ( \Phi^n) - F' (\phi^n) \|_{L^\infty} \le A_{10} \dt , 
  \label{lem 1-0-4} 
\\
  && 
  \| \Delta \Big( F' ( \Phi^n) - F' (\phi^n) \Bigr) \| \le A_{11} \dt ,  \label{lem 1-0-5} 
\end{eqnarray} 
in which $A_j$ ($1 \le j \le 10$) only depends on $A^*$.    
\end{lem}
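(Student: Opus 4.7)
The plan is to establish all eleven estimates by combining three standard tools: the Sobolev embedding $H^s\hookrightarrow L^\infty$ valid for $s>d/2$ (so $H^4\hookrightarrow L^\infty$ in dimensions $d\le 3$); the boundedness of the resolvent $T:=(I+\tfrac34\varepsilon^2\dt\Delta^2)^{-1}$ on every periodic Sobolev space $H^s$ with a constant independent of $\dt$; and the a-priori control (\ref{a priori-2})--(\ref{a priori-3}) on $\phi^n,\phi^{n-1},\phi^{*,n}$ in $H^6$.

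First I would treat the estimates on $p^n$ and $q^n$. From $(I+\tfrac34\varepsilon^2\dt\Delta^2)p^n=\phi^n-\tfrac14\varepsilon^2\dt\Delta^2\phi^{n-1}$ I would rewrite
\begin{equation*}
p^n-\phi^n=-\tfrac34\varepsilon^2\dt\, T\Delta^2\phi^n-\tfrac14\varepsilon^2\dt\, T\Delta^2\phi^{n-1}.
\end{equation*}
Applying boundedness of $T$ on $L^\infty$ and the estimate $\|\Delta^2\phi^k\|_{L^\infty}\le C\|\phi^k\|_{H^6}\le C\tilde A_1$ yields $\|p^n-\phi^n\|_{L^\infty}\le A_1\dt$; then $\|p^n\|_{L^\infty}\le\|\phi^n\|_{L^\infty}+A_1\dt\le A_2$. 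For $q^n=T\Delta F'(\phi^{*,n})$, the chain rule gives $\Delta F'(\phi^{*,n})=3(\phi^{*,n})^2\Delta\phi^{*,n}+6\phi^{*,n}|\nabla\phi^{*,n}|^2-\Delta\phi^{*,n}$, each term bounded in $L^\infty$ by a polynomial in the $H^4$ norm of $\phi^{*,n}$ controlled through (\ref{a priori-3}); applying $T$ yields $\|q^n\|_{L^\infty}\le A_3$.

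Next I would treat the estimates involving $\phi^{*,n}$. Writing $\phi^{*,n}-\phi^n=\tfrac12(\phi^n-\phi^{n-1})$ and splitting into $\tfrac12(\Phi^n-\Phi^{n-1})+\tfrac12(e^{n-1}-e^n)$, the first piece is $O(\dt)$ in $L^\infty$ by (\ref{exact-bound-1}), and the second is $O(\dt^{3/2})$ by (\ref{a priori-1}) and $H^4\hookrightarrow L^\infty$, giving $\|\phi^{*,n}-\phi^n\|_{L^\infty}\le A_4\dt$. The remaining bounds in (\ref{lem 1-0-2}) follow immediately from (\ref{a priori-3}) and the triangle inequality. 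For the $F'$-difference estimates I would use the algebraic identity
\begin{equation*}
F'(a)-F'(b)=(a-b)\bigl(a^2+ab+b^2-1\bigr),
\end{equation*}
which controls $\|F'(a)-F'(b)\|_{L^\infty}$ by $(\|a\|_{L^\infty}^2+\|b\|_{L^\infty}^2+1)\|a-b\|_{L^\infty}$ and hence yields (\ref{lem 1-0-4}) once $\|\Phi^n-\phi^n\|_{L^\infty}\le C\dt^{3/2}$ is noted from $\|e^n\|_{H^4}\le\dt^{3/2}$. For the $L^2$ bounds on $\Delta(F'(a)-F'(b))$ in (\ref{lem 1-0-3}) and (\ref{lem 1-0-5}), I would apply the product rule with $h=a-b$ and $g=a^2+ab+b^2-1$,
\begin{equation*}
\Delta(hg)=g\,\Delta h+2\nabla g\cdot\nabla h+h\,\Delta g,
\end{equation*}
and bound each factor by a polynomial $L^\infty$-norm of $g$ (times an $L^2$ norm of the derivatives of $h$) or vice versa. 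The required $H^2$ estimates of the differences are $\|p^n-\phi^n\|_{H^2}\le C\dt$ (from $T$ bounded on $H^2$ and $\phi^k\in H^6$), $\|\phi^{*,n}-\phi^n\|_{H^2}\le C\dt$ (from (\ref{exact-bound-1}) and (\ref{a priori-1})), and $\|\Phi^n-\phi^n\|_{H^2}=\|e^n\|_{H^2}\le\|e^n\|_{H^4}\le\dt^{3/2}$.

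The only subtlety is bookkeeping: in expanding $\Delta(hg)$ one must ensure that the factor carrying the $O(\dt)$-smallness is placed in $L^2$ (when it is $h$ or its derivatives) while the polynomial factor $g$ sits in $L^\infty$, and that $g$'s $L^\infty$-norm and those of $\nabla g,\Delta g$ are controlled using $H^4$-bounds on $\phi^n,\Phi^n,p^n,\phi^{*,n}$ combined with Sobolev embedding. Once this pairing is done consistently, every constant $A_1,\dots,A_{11}$ depends only on $A^*$ (through $\tilde A_1=A^*+1$), and no $\dt$-dependent loss appears.
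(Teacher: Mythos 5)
Your proposal is correct and follows the same overall strategy as the paper: the resolvent representation of $p^n-\phi^n$ and $q^n$, the \emph{a-priori} $H^6$ control (\ref{a priori-2})--(\ref{a priori-3}), the expansion of $\Delta F'(\phi^{*,n})$, the splitting $\phi^{*,n}-\phi^n=\tfrac12(\Phi^n-\Phi^{n-1})+\tfrac12(e^{n-1}-e^n)$, and a factorization of $F'(a)-F'(b)$ to handle (\ref{lem 1-0-3})--(\ref{lem 1-0-5}) (the paper skips these last details, and elsewhere uses the mean value theorem $F'(a)-F'(b)=F''(\xi)(a-b)$ rather than your algebraic identity; the two are interchangeable here, and yours is the more convenient form when a Laplacian must be applied).

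The one place you genuinely deviate is in invoking the boundedness of $T=(I+\tfrac34\varepsilon^2\dt\Delta^2)^{-1}$ directly on $L^\infty$, uniformly in $\dt$, when estimating $\|p^n-\phi^n\|_{L^\infty}$ and $\|q^n\|_{L^\infty}$. This is true, but it is not a spectral triviality: the symbol is bounded by $1$, which gives the $L^2$ and $H^s$ bounds immediately, whereas the $L^\infty\to L^\infty$ bound requires a uniform $L^1$ estimate on the (sign-changing) kernel of $T$, i.e., a scaling plus Fourier-multiplier argument that you do not supply. The paper avoids this entirely by estimating $\|p^n-\phi^n\|$ and $\|\Delta(p^n-\phi^n)\|$ (and likewise $\|q^n\|$, $\|\Delta q^n\|$) in $L^2$, where the eigenvalue bound $\le 1$ suffices, and only then applying elliptic regularity and the embedding $H^2\hookrightarrow L^\infty$. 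Since you already list the $H^s$-boundedness of $T$ and the embedding $H^2\hookrightarrow L^\infty$ among your tools, this is a one-line repair rather than a gap, but as written that step rests on an unproved (if standard) fact. A second, smaller bookkeeping point: for the term $h\,\Delta g$ in (\ref{lem 1-0-3}) with $g=(p^n)^2+p^n\phi^n+(\phi^n)^2-1$, you need $\|\Delta g\|$ and $\|\nabla g\|_{L^4}$ controlled, hence an $H^2$--$H^4$ bound on $p^n$ itself; this follows from the uniform bound on the symbol $\dt|k|^4/(1+\tfrac34\varepsilon^2\dt|k|^4)\le \tfrac{4}{3\varepsilon^2}$ applied to the representation (\ref{nonlinear-1-1}), but it is worth stating since the naive triangle inequality would demand $H^8$ regularity of $\phi^{n-1}$, which the \emph{a-priori} assumption does not provide.
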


\begin{proof} 
By the representation formula (\ref{nonlinear-1-1}) for $p^n$, we see that 
\begin{eqnarray} 
  && 
   p^n - \phi^n = - \varepsilon^2 \dt ( I + \frac34 \varepsilon^2 \dt \Delta^2 )^{-1}  \Delta^2 ( \frac34 \phi^n + \frac14  \phi^{n-1} )  ,  
   \label{lem 1-1-1-a} 
\\
  && 
   \Delta ( p^n - \phi^n ) = - \varepsilon^2 \dt 
   ( I + \frac34 \varepsilon^2 \dt \Delta^2 )^{-1}  \Delta^3 ( \frac34 \phi^n + \frac14  \phi^{n-1} )  . 
   \label{lem 1-1-1-b}   
\end{eqnarray} 
Meanwhile, by the   {\it a priori}  $H^6$ estimate (\ref{a priori-2}) for the numerical solution $\phi^n$, $\phi^{n-1}$, we have 
\begin{eqnarray} 
   \| \Delta^2 ( \frac34 \phi^n + \frac14  \phi^{n-1} ) \| ,  \,  \,  
   \|  \Delta^3 ( \frac34 \phi^n + \frac14  \phi^{n-1} )  \| 
   \le  (\frac34 + \frac14 ) \tilde{A}_1 = \tilde{A}_ 1. 
   \label{lem 1-1-2}   
\end{eqnarray}  
On the other hand, the following inequality is always available, because all the eigenvalues associated with the operator $( I + \frac34 \varepsilon^2 \dt \Delta^2 )^{-1}$ is bounded by 1: 
\begin{eqnarray} 
  \| ( I + \frac34 \varepsilon^2 \dt \Delta^2 )^{-1} f  \|  \le \| f \| ,   \quad 
  \forall f \in L^2 (\Omega) . 
  \label{lem 1-1-3}   
\end{eqnarray}   
Then we arrive at 
\begin{eqnarray} 
  && 
   \| p^n - \phi^n \| \le \varepsilon^2 \dt 
  \|  \Delta^2 ( \frac34 \phi^n + \frac14  \phi^{n-1} ) \| 
  \le  \tilde{A}_1 \varepsilon^2  \dt ,  
    \label{lem 1-1-4-a} 
\\
  && 
   \| \Delta ( p^n - \phi^n ) \| \le  \varepsilon^2 \dt 
   \| \Delta^3 ( \frac34 \phi^n + \frac14  \phi^{n-1} )  \|  
    \le  \tilde{A}_1 \varepsilon^2  \dt .    
    \label{lem 1-1-4-b}   
\end{eqnarray} 
Therefore, an application of 3-D Sobolev embedding implies that 
\begin{eqnarray} 
  \| p^n - \phi^n \|_{L^\infty} \le C \| p^n - \phi^n \|_{H^2}  
  \le C ( \| p^n - \phi^n \| +   \| \Delta ( p^n - \phi^n ) \| ) 
  \le  C  \tilde{A}_1 \varepsilon^2  \dt ,   \label{lem 1-1-4-c}   
\end{eqnarray} 
with the elliptic regularity used in the second step. In turn, the first inequality of (\ref{lem 1-0-1}) has been proved by taking $A_1 = C \tilde{A}_1 \varepsilon^2$. 

The proof of the second inequality in (\ref{lem 1-0-1}) is more straightforward: 
\begin{eqnarray*} 
    \| p^n \|_{L^\infty}  \le \| \phi^n \|_{L^\infty} +  \| p^n - \phi^n \|_{L^\infty} 
    \le C \| \phi^n \|_{H^2} +  \| p^n - \phi^n \|_{L^\infty}  
    \le C \tilde{A}_1 + A_1 \dt \le  C \tilde{A}_1 + 1 , 
     \label{lem 1-1-5}   
\end{eqnarray*} 
provided that $A_1 \dt \le 1$. Again, the elliptic regularity has been recalled in the second step. As a result, the proof for second inequality in (\ref{lem 1-0-1}) is complete, by taking $A_2 = C \tilde{A}_1 + 1$. 

For the third inequality in (\ref{lem 1-0-1}), we begin with the following expansion, based on the fact that $F' (\phi) = \phi^3 - \phi$: 
\begin{eqnarray} 
   \Delta F' (\phi^{*,n}) = ( 3 (\phi^{*,n})^2 -1 ) \Delta (\phi^{*,n})  
   + 6 \phi^{*,n} | \nabla (\phi^{*,n}) |^2 .   \label{lem 1-1-6}   
\end{eqnarray}      
In turn, a combination of H\"older inequality and Sobolev embedding implies that 
\begin{eqnarray} 
   \| \Delta F' (\phi^{*,n}) \| &\le& ( 3 \| \phi^{*,n} \|_{L^\infty}^2 +1 ) \| \Delta (\phi^{*,n}) \|  
   + 6 \| \phi^{*,n} \|_{L^\infty} \cdot \| \nabla (\phi^{*,n}) \|_{L^4}^2  \nonumber 
\\
  &\le& 
    C ( \| \phi^{*,n} \|_{H^2}^2 +1 ) \| \Delta (\phi^{*,n}) \|  
   + C \| \phi^{*,n} \|_{H^2}^3 
   \le  C ( \| \phi^{*,n}  \|_{H^2}^3  +  \| \phi^{*,n} \|_{H^2} ) . 
     \label{lem 1-1-7}   
\end{eqnarray}   
A similar estimate could also be derived; the details are skipped for the sake of brevity: 
\begin{eqnarray} 
   \| \Delta^2 F' (\phi^{*,n}) \| 
   \le  C ( \| \phi^{*,n}  \|_{H^4}^3  +  \| \phi^{*,n} \|_{H^4} ) . 
     \label{lem 1-1-8}   
\end{eqnarray}   
As a consequence, we arrive at 
\begin{eqnarray*} 
  &&
  \| q^n \|  = \| ( I + \frac34 \varepsilon^2 \dt \Delta^2 )^{-1}  \Delta F' (\phi^{*,n}) \| 
  \le  \| \Delta F' (\phi^{*,n}) \|  
  \le  C ( \| \phi^{*,n}  \|_{H^2}^3  +  \| \phi^{*,n} \|_{H^2} )  ,     \label{lem 1-1-9-a}     
\\
  &&
  \| \Delta q^n \|  = \| ( I + \frac34 \varepsilon^2 \dt \Delta^2 )^{-1} 
   \Delta^2 F' (\phi^{*,n}) \| 
  \le  \| \Delta^2 F' (\phi^{*,n}) \|  
 \nonumber\\&& \le  C ( \| \phi^{*,n}  \|_{H^4}^3  +  \| \phi^{*,n} \|_{H^4} )  .   \label{lem 1-1-9-b}  
\end{eqnarray*} 
Furthermore, its combination with the   {\it a priori}  bound (\ref{a priori-3}) (for $\phi^{*,n}$) results in 
\begin{eqnarray*} 
  \| q^n \|  , \, \, \| \Delta q^n \|  \le  C (  \tilde{A}_1^3  +   \tilde{A}_1 )  ,    
    \label{lem 1-1-10}  
\end{eqnarray*} 
and an application of Sobolev embedding yields 
\begin{eqnarray*} 
  \| q^n \|_{L^\infty} \le C \| q^n \|_{H^2}  
  \le C ( \| q^n \|  + \| \Delta q^n \|  ) 
  \le  C (  \tilde{A}_1^3  +   \tilde{A}_1 )  . \label{lem 1-1-11}  
\end{eqnarray*} 
This finishes the proof of the third inequality in (\ref{lem 1-0-1}), by taking $A_3 =  C (  \tilde{A}_1^3  +   \tilde{A}_1 )$. 

  The rest inequalities in (\ref{lem 1-0-1})-(\ref{lem 1-0-5}) could be analyzed in the same fashion. The  details are skipped for simplicity of presentation. 
\end{proof} 

We aim to prove that the nonlinear equation (\ref{nonlinear-2}) has a unique solution in a neighborhood of $1$. An estimate of the value for $g_n (1)$ is given by the following lemma. 

\begin{lem}  \label{lem 2}
  Given $\phi^n$, $\phi^{n-1}$, under the  assumption (\ref{assumption-H-0}) and the   {\it a priori}  assumption (\ref{a priori-1}), we have $ | g_n (1) | \le A_{12} \dt^2$, with $A_{12}$ only depending on $A^*$.    
\end{lem}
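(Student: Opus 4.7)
The plan is to rewrite $g_n(1)$ in terms of the auxiliary quantity $r^n := p^n + \dt q^n$ (which equals $\phi^{n+1}$ in the case $\eta^{n+1/2}=1$) and then exploit the bounds from Lemma~\ref{lem 1}, which already show $\|r^n - \phi^n\|_{L^\infty} = O(\dt)$ and $\|F'(\phi^{*,n}) - F'(\phi^n)\|_{L^\infty} = O(\dt)$. In this notation,
$$g_n(1) = \int_\Omega \bigl[ F(r^n) - F(\phi^n) - F'(\phi^{*,n})(r^n - \phi^n) \bigr] \, d\bx.$$
The key observation is that the mismatch between the true linearization point $\phi^n$ and the prescribed one $\phi^{*,n}$, as well as the ``quadratic'' Taylor remainder of $F$, both carry two factors of $\dt$, so the integrand is $O(\dt^2)$ pointwise.

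Concretely, I would apply Taylor's theorem with integral remainder,
$$F(r^n) - F(\phi^n) = F'(\phi^n)(r^n - \phi^n) + R_1, \qquad R_1 := \int_0^1 \bigl[F'(\phi^n + s(r^n-\phi^n)) - F'(\phi^n)\bigr](r^n - \phi^n)\, ds,$$
so that the integrand becomes $\bigl[F'(\phi^n) - F'(\phi^{*,n})\bigr](r^n - \phi^n) + R_1$. For the first piece, Lemma~\ref{lem 1} gives $\|F'(\phi^n) - F'(\phi^{*,n})\|_{L^\infty} \le A_9 \dt$ and $\|r^n - \phi^n\|_{L^\infty} \le \|p^n - \phi^n\|_{L^\infty} + \dt\, \|q^n\|_{L^\infty} \le (A_1 + A_3)\dt$, so the $L^1(\Omega)$-norm of this product is at most $|\Omega|\,A_9(A_1 + A_3)\,\dt^2$. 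For $R_1$, since $F(\phi) = \tfrac14(\phi^2-1)^2$ is a quartic and since $\|\phi^n\|_{L^\infty}$, $\|r^n\|_{L^\infty}$ are controlled by Lemma~\ref{lem 1} (hence $|F''|$ is uniformly bounded on the segment joining $\phi^n$ to $r^n$), the mean-value theorem yields $|R_1| \le C\,\|r^n - \phi^n\|_{L^\infty}^2 \le C(A_1 + A_3)^2\dt^2$. Summing the two contributions gives $|g_n(1)| \le A_{12}\,\dt^2$ with $A_{12}$ depending only on $A^*$.

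The argument is essentially routine because all the analytic work has already been packaged into Lemma~\ref{lem 1}: the $L^\infty$ bounds on $p^n$, $q^n$, $p^n - \phi^n$, and $F'(\phi^{*,n}) - F'(\phi^n)$. The only point worth emphasizing is that because $F$ is polynomial of degree four, all its derivatives are uniformly bounded on the bounded set where the arguments live, so no delicate remainder estimates are needed. There is no substantive obstacle here; the lemma is really a bookkeeping consequence of the two $O(\dt)$ estimates in Lemma~\ref{lem 1}.
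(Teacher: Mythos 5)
Your proof is correct and follows essentially the same route as the paper: both arguments isolate the two $O(\dt)$ factors — the increment $p^n+\dt q^n-\phi^n$ and the $O(\dt)$ mismatch of the linearization point of $F'$ — using exactly the bounds packaged in Lemma~\ref{lem 1}. The only cosmetic difference is that the paper applies the mean value theorem twice to write the integrand as a single triple product $F''(\xi^{(2)})(\xi^{(1)}-\phi^{*,n})(p^n+\dt q^n-\phi^n)$, whereas you split it additively into the $[F'(\phi^n)-F'(\phi^{*,n})]$ term plus a quadratic Taylor remainder; both yield the same $O(\dt^2)$ bound with a constant depending only on $A^*$.
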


\begin{proof} 
  We begin with the expansion of $g_n(1)$: 
\begin{eqnarray} 
  g_n (1) = \int_\Omega \, \Big( F (p^n + \dt q^n) - F (\phi^n)   
  - F' (\phi^{*,n}) (p^n + \dt q^n - \phi^n) \Big) \, d \bx .  
  \label{lem 2-1}   
\end{eqnarray} 
An application of intermediate value theorem implies that 
\begin{eqnarray*} 
   F (p^n + \dt q^n) - F (\phi^n)  = F' (\xi^{(1)})  (p^n + \dt q^n - \phi^n) , \quad 
   \mbox{with $\xi^{(1)}$ between $p^n + \dt q^n$ and $\phi^n$} . 
    \label{lem 2-2}   
\end{eqnarray*} 
As a consequence, we get 
\begin{eqnarray} 
  &&
  F (p^n + \dt q^n) - F (\phi^n)  - F' (\phi^{*,n}) (p^n + \dt q^n - \phi^n)   \nonumber 
\\ 
  &=&  
    \Big( F' (\xi^{(1)}) - F' (\phi^{*,n}) \Big) (p^n + \dt q^n - \phi^n)   \nonumber 
\\
  &=&  
    F'' (\xi^{(2)}) ( \xi^{(1)} - \phi^{*,n})  (p^n + \dt q^n - \phi^n)   , \quad 
   \mbox{with $\xi^{(2)}$ between $\xi^{(1)}$ and $\phi^{*,n}$} . 
    \label{lem 2-3}   
\end{eqnarray} 
By the inequality (\ref{lem 1-0-1}), the following estimate is available: 
\begin{eqnarray} 
  \| p^n + \dt q^n - \phi^n \|_{L^\infty} 
  \le  \| p^n - \phi^n \|_{L^\infty}  +  \dt \| q^n \|_{L^\infty}  
  \le (A_1 + A_3) \dt .  \label{lem 2-4}     
\end{eqnarray}  
Moreover, since $\xi^{(1)}$ is between $p^n + \dt q^n$ and $\phi^n$, we see that 
\begin{eqnarray} 
  \| \xi^{(1)} - \phi^{*,n} \|_{L^\infty} 
  \le  \max \Big( \|  p^n + \dt q^n - \phi^{*,n} \|_{L^\infty} ,  
   \| \phi^n - \phi^{*,n} \|_{L^\infty} \Big) .  \label{lem 2-5}     
\end{eqnarray}  
Meanwhile, the following inequalities are available: 
\begin{eqnarray*} 
  \|  \phi^n - \phi^{*,n} \|_{L^\infty} 
  &=& \frac12 \| \phi^n - \phi^{n-1} \|_{L^\infty}  
 = \frac12 \| \Phi^n - \Phi^{n-1} \|_{L^\infty}   
  + \frac12 \| e^n - e^{n-1} \|_{L^\infty}    \nonumber 
\\
  &\le& 
  \frac12 A^* \dt +  C \dt^{3/2} 
\le  (\frac12 A^* + 1) \dt ,  \label{lem 2-6-a}   
\\
  \|  p^n + \dt q^n - \phi^{*,n} \|_{L^\infty} 
  &\le& \| p^n + \dt q^n - \phi^n \|_{L^\infty}  
  +  \| \phi^n - \phi^{*,n} \|_{L^\infty}   \nonumber 
\\
  &\le& 
  (A_1 + A_3  ) \dt + \frac12 A^* \dt +  C \dt^{3/2} \nonumber\\
&\le& 
 (A_1 + A_3  + \frac12 A^* + 1) \dt ,  \label{lem 2-6-b}   
\end{eqnarray*}   
in which the regularity assumption (\ref{exact-bound-1}) and the   {\it a priori}  assumption (\ref{a priori-1}) have been applied. This in turn yields 
\begin{eqnarray} 
  \| \xi^{(1)} - \phi^{*,n} \|_{L^\infty} 
  \le  (A_1 + A_3  + \frac12 A^* + 1) \dt  .  \label{lem 2-7}     
\end{eqnarray} 
To obtain a bound for $F'' (\xi^{(2)})$,  we observe that 
\begin{eqnarray} 
  \| F'' (\xi^{(2)}) \|_{L^\infty} \le \max \Big( \| F'' (p^n + \dt q^n) \|_{L^\infty} , 
  \| F'' (\phi^n) \|_{L^\infty} ,  \| F'' (\phi^{*,n}) \|_{L^\infty} \Big)  ,    \label{lem 2-8}     
\end{eqnarray} 
which comes from the range of $\xi^{(1)}$ and $\xi^{(2)}$. Meanwhile, by the fact that $F'' (\phi) = 3 \phi^2 -1$, the following bounds could be derived: 
\begin{eqnarray} 
  &&
   \| F'' (\phi^n) \|_{L^\infty}  \le 3 \| \phi^n \|_{L^\infty}^2 + 1 
   \le   C \| \phi^n \|_{H^2}^2 + 1 \le C \tilde{A}_1^2 + 1 ,   \label{lem 2-9-1}   
\\
  &&
   \| F'' (\phi^{*,n}) \|_{L^\infty}  \le 3 \| \phi^{*,n} \|_{L^\infty}^2 + 1 
   \le C \tilde{A}_1^2 + 1 ,   \label{lem 2-9-2}  
\\
  &&
   \| F'' (p^n + \dt q^n) \|_{L^\infty}  \le 3 \| p^n + \dt q^n \|_{L^\infty}^2 + 1 
   \le 3 (A_2 +1)^2 + 1 ,   \label{lem 2-9-3}       
\end{eqnarray}      
in which the   {\it a-priori}  bounds (\ref{a priori-2})-(\ref{a priori-3}) have been repeatedly applied, and the last step of (\ref{lem 2-9-3}) is based on the following estimate: 
\begin{eqnarray*} 
    \| p^n + \dt q^n \|_{L^\infty} \le  \| p^n \|_{L^\infty} + \dt \| q^n \|_{L^\infty}    
    \le A_2 + A_3 \dt \le A_2 + 1 ,  \quad \mbox{provided that $A_3 \dt \le 1$} . 
     \label{lem 2-9-4}       
\end{eqnarray*}  
Going back (\ref{lem 2-8}), we arrive at 
\begin{eqnarray} 
  \| F'' (\xi^{(2)}) \|_{L^\infty} \le A_{13} := \max \Big( C \tilde{A}_1^2 + 1 , 
  3 (A_2 +1)^2 + 1  \Big) .     \label{lem 2-10}     
\end{eqnarray} 
As a result, a substitution of (\ref{lem 2-4}), (\ref{lem 2-7}) and (\ref{lem 2-10}) into (\ref{lem 2-3}) yields 
\begin{eqnarray} 
  &&
  \| F (p^n + \dt q^n) - F (\phi^n)  - F' (\phi^{*,n}) (p^n + \dt q^n - \phi^n)  \|_{L^\infty}  \nonumber 
\\
  &\le&  
    \| F'' (\xi^{(2)}) \|_{L^\infty} \cdot \| \xi^{(1)} - \phi^{*,n} \|_{L^\infty}  
    \cdot \| p^n + \dt q^n - \phi^n \|_{L^\infty}  \nonumber 
\\
  &\le& 
  A_{14} \dt ,  \quad \mbox{with} \, \, \, 
  A_{14} =  (A_1 + A_3) (A_1 + A_3  + \frac12 A^* + 1) A_{13} .  
    \label{lem 2-11}   
\end{eqnarray}     
Finally, its combination with (\ref{lem 2-1}) implies the desired estimate:   
\begin{eqnarray*} 
     | g_n (1) | \le 
     \| F (p^n + \dt q^n) - F (\phi^n)  - F' (\phi^{*,n}) (p^n + \dt q^n - \phi^n)  \|_{L^\infty}  \cdot | \Omega |   \le A_{14} | \Omega | \dt . 
  \label{lem 2-12}   
\end{eqnarray*}  
The proof of Lemma~\ref{lem 2} is complete, by taking $A_{12} = A_{14} | \Omega|$. 
\end{proof} 

The derivative of $g_n (\eta)$ for $\eta$ around the value of $1$ is analyzed in the following lemma. 

\begin{lem}  \label{lem 3}
  Given $\phi^n$, $\phi^{n-1}$. Under the  assumption (\ref{assumption-H-0}) and the   {\it a priori}  assumption (\ref{a priori-1}), we have $ | g'_n (\eta) | \ge \frac{| S_n| }{2} \dt$ for any $1 - \dt^{1/2} \le \eta \le 1+ \dt^{1/2}$.    
\end{lem}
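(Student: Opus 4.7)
The plan is to establish $g'_n(\eta) = -\dt\, S_n + O(\dt^{3/2})$ uniformly on the interval and then invoke the lower bound $|S_n| \ge 4\dt^{1/4}$. A direct differentiation of \eqref{nonlinear-2} gives
\begin{equation*}
g'_n(\eta) = \dt \int_\Omega F'(p^n + \eta \dt q^n)\, q^n \, d\bx - \int_\Omega F'(\phi^{*,n})(p^n + \eta \dt q^n - \phi^n) \, d\bx - \eta \dt \int_\Omega F'(\phi^{*,n})\, q^n \, d\bx.
\end{equation*}
Evaluating at $\eta=1$ and combining the first and third terms yields
\begin{equation*}
g'_n(1) = \dt \int_\Omega [F'(p^n + \dt q^n) - F'(\phi^{*,n})]\, q^n \, d\bx - \int_\Omega F'(\phi^{*,n})(p^n + \dt q^n - \phi^n) \, d\bx.
\end{equation*}
The first piece is $O(\dt^2)$ by Lemma~\ref{lem 1}, since $p^n+\dt q^n$ and $\phi^{*,n}$ both differ from $\phi^n$ by $O(\dt)$ in $L^\infty$, while $\|q^n\|$ is uniformly bounded.

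For the second piece, I would use the scheme representation \eqref{nonlinear-1-1}--\eqref{nonlinear-1-2} to derive the identity $p^n + \dt q^n - \phi^n = \dt \Delta F'(\phi^{*,n}) - \varepsilon^2 \dt \Delta^2 \psi^n$ with $\psi^n := \tfrac34 (p^n+\dt q^n) + \tfrac14 \phi^{n-1}$. Two integrations by parts under periodic boundary conditions give
\begin{equation*}
\int_\Omega F'(\phi^{*,n})(p^n + \dt q^n - \phi^n) \, d\bx = -\dt \|\nabla F'(\phi^{*,n})\|^2 - \varepsilon^2 \dt \int_\Omega \Delta F'(\phi^{*,n})\, \Delta \psi^n \, d\bx.
\end{equation*}
On the continuous side, the Cahn-Hilliard equation $\Phi_t = \Delta F'(\Phi) - \varepsilon^2 \Delta^2 \Phi$ yields the analogous identity $S_n = -\|\nabla F'(\Phi^n)\|^2 - \varepsilon^2 \int_\Omega \Delta F'(\Phi^n)\,\Delta \Phi^n \, d\bx$. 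Subtracting and invoking the a-priori bound \eqref{a priori-1}, the regularity \eqref{exact-bound-1}, and the estimates of Lemma~\ref{lem 1} (the last producing $\|p^n - \phi^n\|_{H^2} \le C\dt$ via the resolvent $(I+\tfrac34\varepsilon^2\dt\Delta^2)^{-1}$), I would show $\|\phi^{*,n} - \Phi^n\|_{H^2} \le C\dt$ and $\|\psi^n - \Phi^n\|_{H^2} \le C\dt$, so that a product estimate gives $\int_\Omega F'(\phi^{*,n})(p^n + \dt q^n - \phi^n)\, d\bx = \dt S_n + O(\dt^2)$. Hence $g'_n(1) = -\dt S_n + O(\dt^2)$.

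To pass from $\eta=1$ to the full interval $[1-\dt^{1/2},1+\dt^{1/2}]$, I would differentiate once more to obtain
\begin{equation*}
g''_n(\eta) = \dt^2 \int_\Omega F''(p^n+\eta\dt q^n)(q^n)^2 \, d\bx - 2\dt \int_\Omega F'(\phi^{*,n})\, q^n \, d\bx.
\end{equation*}
By Lemma~\ref{lem 1} and the a-priori bounds, $|g''_n(\eta)| \le C\dt$ uniformly on the interval, so the mean value theorem gives $|g'_n(\eta) - g'_n(1)| \le C\dt \cdot |\eta-1| \le C\dt^{3/2}$. Combining yields $|g'_n(\eta)| \ge \dt |S_n| - C\dt^{3/2}$. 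Using $|S_n| \ge 4\dt^{1/4}$, which is equivalent to $\dt^{1/2} \le |S_n|^2/16$, the remainder is dominated by $\tfrac12 \dt |S_n|$ once $\dt$ is small enough (in terms of $A^*$), giving the desired bound $|g'_n(\eta)| \ge \tfrac{|S_n|}{2}\dt$.

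The principal difficulty is Step~3, identifying the discrete quantity $\int_\Omega F'(\phi^{*,n})(p^n+\dt q^n - \phi^n)\,d\bx$ with $\dt S_n$ up to $O(\dt^2)$. Every other piece is algebraic or falls directly out of Lemma~\ref{lem 1}; this step, however, requires one to recognize that the scheme structure itself produces an energy-dissipation identity mirroring the continuous one, and to control the mismatch between $\psi^n$ (which involves the unknown $p^n+\dt q^n$) and $\Phi^n$ in $H^2$, rather than merely in $L^\infty$. The fact that the resolvent $(I+\tfrac34\varepsilon^2\dt\Delta^2)^{-1}$ is nonexpansive in $L^2$ is what makes this $H^2$ bound on $p^n-\phi^n$ achievable without invoking any not-yet-established error estimate at time level $n+1$.
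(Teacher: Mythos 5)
Your proposal is correct and arrives at the same expansion $g_n'(\eta) = -S_n\dt + O(\dt^{3/2})$ as the paper, but by a genuinely different organization of the computation. The paper expands $g_n'(\eta)$ explicitly into six integrals $I_1,\dots,I_6$ using $F'(\phi)=\phi^3-\phi$, discards $I_4,I_5,I_6$ as higher order, and replaces each of $I_1,I_2,I_3$ by an exact-solution surrogate $I_j^*$ with $|I_j-I_j^*|\le C\dt^2$, the key device being the self-adjointness of $( I + \frac34 \varepsilon^2 \dt \Delta^2 )^{-1}\Delta$ together with the estimates of Lemma~\ref{lem 1}; the PDE identity $-\Delta F'(\Phi^n)+\varepsilon^2\Delta^2\Phi^n=-(\Phi_t)^n$ then extracts $-S_n\dt$ from $I_1^*+I_2^*+I_3^*$, and the $\eta$-dependence is isolated in a single term $I_8^*$ bounded by $C\dt^{3/2}$ via $|1-\eta|\le\dt^{1/2}$. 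You instead evaluate at $\eta=1$, group the derivative as $\dt\int_\Omega\bigl(F'(p^n+\dt q^n)-F'(\phi^{*,n})\bigr)q^n\,d\bx$ (manifestly $O(\dt^2)$ by the $L^\infty$ estimates) plus $-\int_\Omega F'(\phi^{*,n})(p^n+\dt q^n-\phi^n)\,d\bx$, and identify the latter with $-\dt S_n+O(\dt^2)$ by substituting the scheme's own update identity for $p^n+\dt q^n-\phi^n$ and integrating by parts so that the discrete computation mirrors the chain-rule evaluation of $S_n$; the passage to the whole interval is then handled by the bound $|g_n''(\eta)|\le C\dt$ and the mean value theorem. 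The two routes use the same ingredients (the $L^\infty$ and $H^2$ estimates of Lemma~\ref{lem 1}, the a priori assumption \eqref{a priori-1}, and the recognition of the PDE structure), and the smallness conditions coincide: your $C\dt^{3/2}$ remainder is absorbed by $\frac{|S_n|}{2}\dt$ exactly as the paper's $2\dt^{5/4}$ is, using $|S_n|\ge 4\dt^{1/4}$. Your grouping is arguably more transparent about \emph{why} the leading term is $-S_n\dt$ (it is a discrete analogue of the energy-dissipation identity for the nonlinear part of the energy), at the modest cost of needing $H^2$ control of $\psi^n-\Phi^n$ and $\phi^{*,n}-\Phi^n$, which, as you correctly note, follows from the nonexpansiveness of the resolvent and \eqref{a priori-1} without invoking any error estimate at level $n+1$.
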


\begin{proof} 
Without loss of generality, we assume that $S_n <0$. A careful calculation reveals the following expression for $g'_n (\eta)$, by making use of the fact that $F (\phi) = \frac14 (\phi^2-1)^2$: 
\begin{eqnarray} 
  &&
  g_n' (\eta) = \int_\Omega \, \Big( \dt (p^n)^3 q^n + 3 \eta \dt^2 (p^n)^2  (q^n)^2 
  + 3 \eta^2 \dt^3 p^n (q^n)^3 + \eta^3 \dt^4 (q^n)^4   
  - \dt p^n q^n \nonumber 
\\
  &&  \qquad  \qquad 
  - \eta \dt^2 (q^n)^2  
  - F' (\phi^{*,n}) (p^n - \phi^n)
  - 2 \eta \dt F' (\phi^{*,n}) q^n \Big) \, d \bx 
  := \sum_{j=1}^6 I_j ,     \mbox{with}  \label{lem 3-1-a}   
\\ 
  && 
  I_1 = \dt ( (p^n)^3 - p^n, q^n ) ,  \, \, \, I_2 =  ( - F' (\phi^{*,n}), p^n - \phi^n ) , \, \, \, 
  I_3 = - 2 \eta \dt ( F' (\phi^{*,n}) , q^n )  ,   \label{lem 3-1-b}     
\\ 
  && 
  I_4 =  \eta \dt^2 ( 3 (p^n)^2 -1 , (q^n)^2  ) ,   \, \, \, 
  I_5 = 3 \eta^2 \dt^3 ( p^n , (q^n)^3  )  , \, \, \, 
  I_6 =  \eta^3 \dt^4 ( (q^n)^4 , 1 ) .   \label{lem 3-1-c}     
\end{eqnarray} 

For the $I_4$ part, an application of the $L^\infty$ bound (\ref{lem 1-0-1}) for $p^n$ and $q^n$ gives the following estimate: 
\begin{eqnarray} 
\hskip 1cm
  | I_4 | \le \eta \dt^2 ( 3 \| p^n \|_{L^\infty}^2 +1 ) \cdot \| q^n \|_{L^\infty}^2 | \Omega| 
  \le ( 3A_2^2 +1) A_3^2 | \Omega |  \eta \dt^2 
  \le ( 4A_2^2 +2) A_3^2 | \Omega |  \dt^2 ,  \label{lem 3-2}  
\end{eqnarray} 
in which the last step is based on the fact that $1 - \dt^{1/2} \le \eta \le 1+ \dt^{1/2}$. Similar bounds could be obtained for $I_5$ and $I_6$; the  details are skipped for the sake of brevity: 
\begin{eqnarray} 
  | I_5 | \le 4 A_2 A_3^3 | \Omega |  \dt^3 ,   \quad 
 | I_6 | \le 2 A_4^3 | \Omega |  \dt^4 . \label{lem 3-3}  
\end{eqnarray} 

For the $I_1$ part, we observe the following transformation: 
\begin{eqnarray} 
  I_1 &=& \dt ( (p^n)^3 - p^n, q^n ) 
     = \dt \Big( (p^n)^3 - p^n, 
     ( I + \frac34 \varepsilon^2 \dt \Delta^2 )^{-1}  \Delta F' (\phi^{*,n})  \Big)   \nonumber 
\\
  &=& 
  \dt \Big( F' (\phi^{*,n}), 
     ( I + \frac34 \varepsilon^2 \dt \Delta^2 )^{-1}  \Delta ( (p^n)^3 - p^n ) \Big)  ,     
     \label{lem 3-4-1}        
\end{eqnarray}      
in which the last step comes from the fact that $( I + \frac34 \varepsilon^2 \dt \Delta^2 )^{-1}  \Delta$ is a self-adjoint operator. Meanwhile, we introduce an approximate integral value: 
\begin{eqnarray} 
  I_1^* := 
  \dt \Big( F' (\Phi^n), 
     ( I + \frac34 \varepsilon^2 \dt \Delta^2 )^{-1}  \Delta F' (\Phi^n)  \Big)  .      
     \label{lem 3-4-2}        
\end{eqnarray}  
On the other hand, the following estimates could be derived:   
\begin{eqnarray} 
  && 
  \| F' ( \phi^{*,n}) - F' (\Phi^n) \|_{L^\infty} 
  \le \| F' ( \phi^{*,n}) - F' (\phi^n) \|_{L^\infty} 
  + \| F' ( \Phi^n) - F' (\phi^n) \|_{L^\infty} \le (A_9 + A_{10} ) \dt  ,  \nonumber
\\
  && 
  \| F' ( \phi^{*,n}) - F' (\Phi^n) \| 
  \le \| F' ( \phi^{*,n}) - F' (\Phi^n) \|_{L^\infty}  \cdot | \Omega |^{1/2} 
  \le (A_9 + A_{10} ) | \Omega |^{1/2} \dt,   \nonumber
\\
  && 
  \| F' ( \phi^{*,n}) \|_{L^\infty} 
  \le \| \phi^{*,n} \|_{L^\infty}^3 + \| \phi^{*,n} \|_{L^\infty} 
  \le A_5^3 + A_5 ,  \nonumber  
\\
  && 
  \| F' ( \phi^{*,n}) \|  \le \| F' ( \phi^{*,n}) \|_{L^\infty}  \cdot | \Omega|^{1/2} 
    \le (A_5^3 + A_5) | \Omega|^{1/2}  ,  \nonumber    
\\  
  && 
  \| \Delta \Big( F' ( \Phi^n) - F' (p^n) \Bigr) \|    
  \le \| \Delta \Big( F' ( p^n) - F' (\phi^n) \Bigr) \| 
  + \| \Delta \Big( F' ( \Phi^n) - F' (\phi^n) \Bigr) \| \le (A_7 + A_{11}  ) \dt ,  
  \nonumber
\\
  && 
  \| \Delta F' ( \Phi^n)  \| \le C ( \| \Phi^n \|_{H^2}^3 + \| \Phi^n \|_{H^2} ) 
  \le C ( (A^*)^3 + A^* ) ,  \nonumber 
\end{eqnarray}     
in which in the inequalities in (\ref{lem 1-0-2})-(\ref{lem 1-0-5}) have been extensively applied. Using the above inequalities, we get 
\begin{equation} 
\begin{aligned} 
 | I_1 - I_1^* |  
  \le & \dt \| F' ( \phi^{*,n}) - F' (\Phi^n) \|  \cdot \| F' ( \phi^{*,n}) \|   
  + \dt \| \Delta \Big( F' ( \Phi^n) - F' (p^n) \Bigr) \|    
  \cdot \| \Delta F' ( \Phi^n)  \|   
\\
  \le & 
  A_{15} \dt ^2 ,   \mbox{with} \, 
  A_{15}:= (A_5^3 + A_5)  (A_9 + A_{10} ) | \Omega |     
  + C ( (A^*)^3 + A^* )  (A_7 + A_{11}  )  .   
\end{aligned}   
  \label{lem 3-4-9}   
\end{equation}   

A similar analysis could be performed for the $I_3$ part, which could be transformed as 
\begin{eqnarray} 
  I_3 &=& -2 \eta \dt ( F' (\phi^{*,n} , q^n ) 
     = \dt \Big( (\phi^{*,n})^3 - \phi^{*,n}, 
     ( I + \frac34 \varepsilon^2 \dt \Delta^2 )^{-1}  \Delta F' (\phi^{*,n})  \Big)   \nonumber 
\\
  &=& 
  \dt \Big( F' (\phi^{*,n}), 
     ( I + \frac34 \varepsilon^2 \dt \Delta^2 )^{-1}  
     \Delta ( (\phi^{*,n})^3 - \phi^{*,n} ) \Big)  .     
     \label{lem 3-5-1}        
\end{eqnarray}     
On the other hand, we introduce an approximate integral value: 
\begin{eqnarray} 
  I_3^* := -2 \eta I_1^* 
  = -2 \eta \dt \Big( F' (\Phi^n), 
     ( I + \frac34 \varepsilon^2 \dt \Delta^2 )^{-1}  \Delta F' (\Phi^n)  \Big)  .      
     \label{lem 3-5-2}        
\end{eqnarray}  
The following estimate could be derived in the same manner; the  details are left to interested readers: 
\begin{eqnarray} 
 | I_3 - I_3^* |  
  \le A_{16} \dt ^2,  \, \, 
  A_{16}:= (A_5^3 + A_5)  (A_9 + A_{10} ) | \Omega |     
  + C ( (A^*)^3 + A^* )  (A_9 + A_{11}  )  .  \label{lem 3-5-3}   
\end{eqnarray}     

  For the $I_2$ part, we begin with the following expression, which comes from (\ref{lem 1-1-1-a}): 
\begin{eqnarray} 
  I_2 =  ( - F' (\phi^{*,n}), p^n - \phi^n ) 
  =   \varepsilon^2 \dt \Big( F' (\phi^{*,n}),  
  ( I + \frac34 \varepsilon^2 \dt \Delta^2 )^{-1}  
  \Delta^2 ( \frac34 \phi^n + \frac14  \phi^{n-1} ) \Big) .   
   \label{lem 3-6-1}   
\end{eqnarray}  
Similarly, we also introduce an approximate integral value: 
\begin{eqnarray} 
  I_2^* :=   \varepsilon^2 \dt \Big( F' (\phi^{*,n}),  
  ( I + \frac34 \varepsilon^2 \dt \Delta^2 )^{-1}  
  \Delta^2 \Phi^n \Big) .   
   \label{lem 3-6-2}   
\end{eqnarray} 
To estimate the difference between $I_1$ and $I_2^*$, we have the following observations: 
\begin{eqnarray} 
  && 
   \| \Delta^2 ( \frac34 \phi^n + \frac14  \phi^{n-1} )  
   -    \Delta^2 ( \frac34 \Phi^n + \frac14  \Phi^{n-1} ) \|    
   = \| \Delta^2 ( \frac34 e^n + \frac14  e^{n-1} )  \|  
   \le \dt^{3/2} , \, \, \mbox{(by (\ref{a priori-1}))} ,   \nonumber
 \\
   && 
   \|  \Delta^2 ( \frac34 \Phi^n + \frac14  \Phi^{n-1} ) - \Delta^2 \Phi^n \|    
   = \frac14 \| \Delta^2 ( \Phi^n - \Phi^{n-1} )  \|  
   \le A^* \dt , \, \, \mbox{(by (\ref{exact-bound-1}))} ,   \nonumber  
\end{eqnarray}  
so that 
\begin{eqnarray} 
  && 
  \| \Delta^2 ( \frac34 \phi^n + \frac14  \phi^{n-1} ) - \Delta^2 \Phi^n \|  \le
    \| \Delta^2 ( \frac34 \phi^n + \frac14  \phi^{n-1} )  \nonumber\\
   &&-    \Delta^2 ( \frac34 \Phi^n + \frac14  \Phi^{n-1} ) \|    
   +  \|  \Delta^2 ( \frac34 \Phi^n + \frac14  \Phi^{n-1} ) - \Delta^2 \Phi^n \|    
   \le ( A^* + 1) \dt   .   \label{lem 3-6-5}        
\end{eqnarray}  
Moreover, because of the fact that, all the eigenvalues associated with the operator $( I + \frac34 \varepsilon^2 \dt \Delta^2 )^{-1}$ is bounded by 1, we see that 
\begin{eqnarray}  
  \| ( I + \frac34 \varepsilon^2 \dt \Delta^2 )^{-1} 
  \Delta^2 ( \frac34 \phi^n + \frac14  \phi^{n-1} ) 
  - ( I + \frac34 \varepsilon^2 \dt \Delta^2 )^{-1} \Delta^2 \Phi^n \|    
   \le ( A^* + 1) \dt   .   \label{lem 3-6-6}        
\end{eqnarray}  
In addition, the fact that $\| \Delta^2 \Phi^n \| \le A^*$ gives 
\begin{eqnarray*}  
  \| ( I + \frac34 \varepsilon^2 \dt \Delta^2 )^{-1} \Delta^2 \Phi^n \|    
   \le \| \Delta^2 \Phi^n \|  \le A^*   .   \label{lem 3-6-7}        
\end{eqnarray*}  
Then we arrive at the following estimate: 
\begin{eqnarray} 
 | I_2 - I_2^* |  
  &\le& \dt \| F' ( \phi^{*,n}) - F' (\Phi^n) \|  \cdot \| F' ( \phi^{*,n}) \|    \nonumber 
\\
  && 
  + \dt \| ( I + \frac34 \varepsilon^2 \dt \Delta^2 )^{-1} 
  \Delta^2 ( \frac34 \phi^n + \frac14  \phi^{n-1} -  \Phi^n ) \|  
  \cdot \| ( I + \frac34 \varepsilon^2 \dt \Delta^2 )^{-1} \Delta^2 \Phi^n \|   \nonumber 
\\
  &\le& 
  A_{17} \dt ^2 ,  \quad \mbox{with} \, \, 
  A_{17}:= (A_5^3 + A_5)  (A_9 + A_{10} ) | \Omega |     
  + A^* (A^* +1 )  .  \label{lem 3-6-8}   
\end{eqnarray}    

As a consequence, a substitution of (\ref{lem 3-2}), (\ref{lem 3-3}), (\ref{lem 3-4-9}), (\ref{lem 3-5-3}) and (\ref{lem 3-6-8}) into (\ref{lem 3-1-a})-(\ref{lem 3-1-b}) indicates that 
\begin{eqnarray} 
\hskip 1cm
  g_n' (\eta) = I_1^* + I_2^* + I_3^* + {\cal R}_1 ,  \quad 
  \, \, \, 
  | {\cal R}_1 | \le  \Big( ( 4A_2^2 +2) A_3^2 | \Omega |  
  + A_{15} + A_{16} + A_{17} + 1 \Big) \dt^2  ,  \label{lem 3-7-1}   
\end{eqnarray} 
provided that $4 A_2 A_3^3 | \Omega |  \dt \le \frac12$, $2 A_4^3 | \Omega |  \dt^2 \le \frac12$. In addition, a careful calculation reveals that 
\begin{eqnarray} 
   I_1^* + I_2^* + I_3^* 
   =  \dt \Big( F' (\Phi^n), 
     ( I + \frac34 \varepsilon^2 \dt \Delta^2 )^{-1}  
     \Big( ( 1- 2 \eta) \Delta F' (\Phi^n) + \varepsilon^2 \Phi^n \Big) \Big) . 
     \label{lem 3-7-2}          
\end{eqnarray} 
A further decomposition is made to facilitate the later analysis: 
\begin{eqnarray} 
   I_1^* + I_2^* + I_3^*  = I_7^* + I_8^* ,   \, \, \,  \mbox{with} &&  
  I_7^* = \dt \Big( F' (\Phi^n), 
     ( I + \frac34 \varepsilon^2 \dt \Delta^2 )^{-1}  
     \Big( - \Delta F' (\Phi^n) + \varepsilon^2 \Delta^2 \Phi^n \Big) \Big)  ,  \nonumber 
 \\
   &&
   I_8^* = 2 (1- \eta) \dt \Big( F' (\Phi^n), 
     ( I + \frac34 \varepsilon^2 \dt \Delta^2 )^{-1}  \Delta F' (\Phi^n) \Big) . 
     \label{lem 3-7-3}          
\end{eqnarray}   
An estimate for $I_8^*$ is straightforward: 
\begin{eqnarray*} 
  && 
   \| F' (\Phi^n) \|  , \, \,  \| \Delta F' (\Phi^n)  \| 
   \le C ( \| \Phi^n \|_{H^2}^3 + \| \Phi^n ||_{H^2} ) 
   \le C ((  A^*)^3 + A^* )  ,    \label{lem 3-7-4}    
\\
  && 
   \| ( I + \frac34 \varepsilon^2 \dt \Delta^2 )^{-1} \Delta F' (\Phi^n)  \| 
   \le \|  \Delta F' (\Phi^n)  \|    
   \le C ((  A^*)^3 + A^* )  ,    \label{lem 3-7-5}                   
\end{eqnarray*}  
so that 
\begin{eqnarray} 
\hskip 1cm
   | I_8^* | \le 2 | 1- \eta | \dt\cdot  \| F' (\Phi^n) \| 
     \cdot \| ( I + \frac34 \varepsilon^2 \dt \Delta^2 )^{-1}  \Delta F' (\Phi^n) \| 
     \le  C ((  A^*)^3 + A^* )^2 \dt^{3/2}  ,    
     \label{lem 3-7-6}          
\end{eqnarray}   
in which the fact that $| 1 - \eta | \le \dt^{1/2}$ has been applied in the last step. For the part $I_7^*$, we observe that $- \Delta F' (\Phi^n) + \varepsilon^2 \Delta^2 \Phi^n = - ( \Phi_t)^n$ (satisfied by the original PDE), so that a further decomposition is available:  
\begin{eqnarray} 
  &&
   I_7^* = - \dt \Big( F' (\Phi^n), 
     ( I + \frac34 \varepsilon^2 \dt \Delta^2 )^{-1}  ( \Phi_t)^n \Big) 
     := I_9^* + I_{10}^* ,  \quad \mbox{with}  \nonumber 
 \\
   && 
   I_9^* = - \dt \Big( F' (\Phi^n), (\Phi_t)^n \Big) ,  \quad 
   I_{10}^* = \frac34 \varepsilon^2 \dt^2 \Big( F' (\Phi^n), 
     ( I + \frac34 \varepsilon^2 \dt \Delta^2 )^{-1}  \Delta^2 ( \Phi_t)^n \Big)  . 
     \label{lem 3-7-7}         
\end{eqnarray} 
By (\ref{assumption-H-0}), an exact value is available for $I_9^*$: 
\begin{eqnarray} 
  I_9^* = - S_n \dt .  
  \label{lem 3-7-8}      
\end{eqnarray} 
The estimate for $I_{10}^*$ could be carried out as follows
\begin{eqnarray*} 
  &&
  \| F' (\Phi^n) \| \le C ((  A^*)^3 + A^* )  ,  \quad 
  \| \Delta^2 ( \Phi_t)^n \| \le C A^* ,  \quad \mbox{by (\ref{exact-bound-1}) } ,  
  \label{lem 3-7-9}   
\\
   &&
   \| ( I + \frac34 \varepsilon^2 \dt \Delta^2 )^{-1}  \Delta^2 ( \Phi_t)^n  \| 
   \le \| F' (\Phi^n) \|  \le C A^* ,   \label{lem 3-7-10}   
\end{eqnarray*} 
so that 
\begin{eqnarray} 
   | I_{10}^* | \le \frac34 \varepsilon^2 \dt^2  \| F' (\Phi^n) \| 
     \cdot \| ( I + \frac34 \varepsilon^2 \dt \Delta^2 )^{-1}  \Delta^2 ( \Phi_t)^n \|  
     \le C ((  A^*)^4 + A^2 ) \varepsilon^2 \dt^2 .    
     \label{lem 3-7-11}         
\end{eqnarray} 
Therefore, a substitution of (\ref{lem 3-7-6}), (\ref{lem 3-7-7}), (\ref{lem 3-7-8}), (\ref{lem 3-7-11}) into (\ref{lem 3-7-3}) results in 
\begin{eqnarray} 
  && 
   I_1^* + I_2^* + I_3^*  = - S_n \dt + {\cal R}_2 ,   \nonumber 
\\
  && 
   \mbox{with}  \, \, \,  
   | {\cal R}_2 | \le C ((  A^*)^4 + A^2 ) \varepsilon^2 \dt^2    
       + C ((  A^*)^3 + A^* )^2 \dt^{3/2}  \le \dt^{5/4} ,  \label{lem 3-7-12}         
\end{eqnarray}        
provided that $\dt$ is sufficiently small. Its combination with (\eqref{lem 3-7-1}) leads to 
\begin{eqnarray*} 
   g_n' (\eta) = - S_n \dt + {\cal R} ,  \quad 
  \mbox{with} \, \, \,  {\cal R} = {\cal R}_1 + {\cal R}_2 , \,  \, \, 
   | {\cal R} | \le 2 \dt^{5/4} ,  \label{lem 3-7-13}         
\end{eqnarray*}        
for any $1 - \dt^{1/2} \le \eta \le 1 + \dt^{1/2}$. Subsequently, if we take $\dt$ small enough with $2 \dt^{1/4} \le \frac{S_n}{2}$, the following estimate is valid: 
\begin{equation*} 
  \frac{|S_n|}{2} \dt \le g_n' (\eta) \le \frac{3 |S_n|}{2} \dt ,  \, \, \, 
  \mbox{so that} \, \, \, | g_n' (\eta)  | \ge \frac{|S_n|}{2} \dt ,  \, \, \, 
  \mbox{for $1 - \dt^{1/2} \le \eta \le 1 + \dt^{1/2}$} . 
    \label{lem 3-7-14}         
\end{equation*}        
 This finishes the proof of Lemma~\ref{lem 3}.       
\end{proof}

\subsection{Proof of Theorem~\ref{thm: solvability}}

We can now proceed to the proof of Theorem~\ref{thm: solvability}. 
  Without loss of generality, we assume that $S_n <0$ and $g_n(1) > 0$, as the other cases could be analyzed in the same way. By Lemmas~\ref{lem 2} and \ref{lem 3}, we have
\begin{eqnarray} 
   0 \le g_n (1) \le A_{12} \dt^2 ,  \quad 
   g_n' (\eta) \ge - \frac{S_n}{2} \dt ,  \, \, \, 
   \mbox{for $1 - \dt^{1/2} \le \eta \le 1 + \dt^{1/2}$}  . 
   \label{solvability-1} 
\end{eqnarray} 
Then we conclude that, $g_n (\eta)$ is monotone and increasing over the interval $(1 - \delta^*, 1)$, with $\delta^* = - \frac{2 A_{12}}{S_n} \dt$, so that 
\begin{eqnarray} 
  g_n(1) \ge 0 , \, \, \, g_n (1 - \delta^*) \le g_n(1) + \frac{S_n}{2} \dt \cdot \delta^* 
   \le A_{12} \dt^2  -   A_{12} \dt^2 = 0 .  \label{solvability-2} 
\end{eqnarray}  
Therefore, there is a unique solution for $g_n (\eta) =0$ over the interval $(1 - \delta^*, 1)$. In addition, we have $\delta^* \le \dt^{1/2}$, if $\dt \leq \frac{S^4_n}{256} \leq \frac{S_n^2}{4A_{12}} $ with  $|S_n| \ll 1$. Since $g_n (\eta)$ is increasing over $[1- \dt^{1/2}, 1 + \dt^{1/2}]$, such a solution is unique in the interval $[1- \dt^{1/2}, 1 + \dt^{1/2}]$.     The proof is completed.

\section{Error analysis}\label{sec:convergence}
Given a tolerance $\gamma>0$, we shall assume
\begin{equation} 
	\Big| \int_\Omega \, F' (\Phi) \Phi_t \, d \bx \Big| \ge \gamma, \text{ for } 0\le t\le T,
	\label{assumptionb} 
\end{equation} 
which implies in particular that $|S_n|\ge \gamma$ for all $n\le T/\Delta t$.

Usually,  the above assumption is only satisfied for  $0\le t\le T_0\le T$,  but for the sake of simplicity we shall assume $T_0=T$. An error analysis is carried out in this section for the scheme  (\ref{scheme-SAVN-1})-(\ref{scheme-SAVN-2}) under the above assumption. For the time intervals where \eqref{assumptionb}  is not satisfied, we use the modified Lagrange multiplier approach by setting $\eta^{n+1/2}=1$ and computing $\phi^{n+1}$ by (\ref{scheme-SAVN-1}).


The main result of this section is stated in the following theorem. 

\begin{thm}
	\label{thm: convergence}
	Given initial data $\Phi^0 \in H^8_{\rm per}(\Omega)$, suppose the exact solution for the Cahn-Hilliard equation~(\ref{equation-CH}) is of regularity class $\mathcal{R}$, 
	and satisfies the  assumption (\ref{assumption-H-0}). 
	Then, provided that $\dt \le C\min \Big( (  \hat{C} )^{-2} ,  \hat{C}^2 \varepsilon^2, \frac{\gamma^4}4 \Big)$, we have
	\begin{equation}
		\| \Delta^2 e^m \| +  \Big( \varepsilon^2 \dt   \sum_{k=1}^{m} \| \Delta^3 e^k \|^2 \Big)^{1/2}  \le \hat{C} \dt^2,   \label{convergence-0}
	\end{equation}
	for all positive integers $m$, such that $t^m=m\dt \le T$, where $\hat{C}, C>0$  are some positive constants independent of $n$ and $\dt$.
\end{thm}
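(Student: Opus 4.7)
The plan is to prove \eqref{convergence-0} by strong induction on $m$, taking as hypothesis at step $n$ the a-priori bound \eqref{a priori-1} together with the refined estimate $\|\Delta^2 e^k\|\le \hat C\,\dt^2$ for all $k\le n$. These are consistent for $\dt$ small relative to $\hat C$ by elliptic regularity, so the induction base is supplied by the initial regularity $\Phi^0\in H^8$ and a standard startup. The assumption $|S_n|\ge \gamma\ge 4\dt^{1/4}$, which follows from $\dt\le\gamma^4/4$, allows me to invoke Theorem~\ref{thm: solvability}: the nonlinear algebraic equation \eqref{nonlinear-2} admits a unique $\eta^{n+1/2}\in[1-\dt^{1/2},1+\dt^{1/2}]$ and hence the scheme \eqref{scheme-SAVN-1}--\eqref{scheme-SAVN-2} is uniquely solvable at step $n+1$.

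To derive the error equation I plug $\Phi^k$ into the scheme; Taylor expansion around $t^{n+1/2}$, together with the regularity \eqref{assumption:regularity.1}, produces a consistency residual $\tau^{n+1}$ with $\|\Delta^j\tau^{n+1}\|\le C\dt^2$ for the small values of $j$ needed below. Subtracting from \eqref{scheme-SAVN-1} yields
\begin{equation*}
\frac{e^{n+1}-e^n}{\dt}+\varepsilon^2\Delta^2\bigl(\tfrac34 e^{n+1}+\tfrac14 e^{n-1}\bigr)=\Delta\bigl[F'(\Phi^{*,n})-\eta^{n+1/2}F'(\phi^{*,n})\bigr]+\tau^{n+1}.
\end{equation*}
The nonlinear difference splits as $\bigl(F'(\Phi^{*,n})-F'(\phi^{*,n})\bigr)+\bigl(1-\eta^{n+1/2}\bigr)F'(\phi^{*,n})$. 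The first piece is controlled by the mean-value theorem together with the $L^\infty$ bounds from Lemma~\ref{lem 1}, so the only genuinely new ingredient is a sharp estimate of $|1-\eta^{n+1/2}|$.

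The bound $|1-\eta^{n+1/2}|\le 2A_{12}\dt/|S_n|$ that drops out of Lemmas~\ref{lem 2}--\ref{lem 3} is only $O(\dt)$ and, after multiplication by $\dt$ and summation over $m=O(1/\dt)$ steps, yields only first-order total error. The key is to refine $g_n(1)$ to order $\dt^3$ by using the midpoint Taylor expansion
\begin{equation*}
F(\phi^{n+1})-F(\phi^n)=F'\bigl(\tfrac{\phi^{n+1}+\phi^n}{2}\bigr)(\phi^{n+1}-\phi^n)+\tfrac{1}{24}F'''(\zeta)(\phi^{n+1}-\phi^n)^3,
\end{equation*}
which rewrites $g_n(1)=\int_\Omega\bigl(F'(\tfrac12(\phi^{n+1}+\phi^n))-F'(\phi^{*,n})\bigr)(\phi^{n+1}-\phi^n)\,d\bx+O(\dt^3)$. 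The discrete second difference $\tfrac12(\phi^{n+1}+\phi^n)-\phi^{*,n}=\tfrac12(\phi^{n+1}-2\phi^n+\phi^{n-1})$ is $O(\dt^2)$ in $L^\infty$: the exact-solution contribution is $O(\dt^2)$ by \eqref{exact-bound-1}, and the numerical-error contribution $\tfrac12(e^{n+1}-2e^n+e^{n-1})$ is controlled by the induction hypothesis (via Sobolev embedding on $\|\Delta^2 e^k\|\le\hat C\dt^2$). Combined with Lemma~\ref{lem 3} this produces $|1-\eta^{n+1/2}|\le(C/\gamma)\dt^2$, which is the accuracy order actually needed. With this upgrade, I would apply $\Delta^2$ to the error equation, test with $\Delta^2\bigl(\tfrac34 e^{n+1}+\tfrac14 e^{n-1}\bigr)$ so that the dissipative term produces the clean $\varepsilon^2\|\Delta^3(\cdot)\|^2$ contribution, absorb the nonlinear and multiplier contributions by Cauchy--Schwarz and Young's inequality, and invoke discrete Gr\"onwall to arrive at $\|\Delta^2 e^m\|^2+\varepsilon^2\dt\sum_{k=1}^m\|\Delta^3 e^k\|^2\le \hat C^2\dt^4$; this recovers the refined hypothesis at step $n+1$ and completes the induction under the stated smallness $\dt\le C\min(\hat C^{-2},\hat C^2\varepsilon^2,\gamma^4/4)$.

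The principal obstacle is precisely this $O(\dt)\to O(\dt^2)$ refinement of the multiplier bound. The bound in Lemmas~\ref{lem 2}--\ref{lem 3} is exactly strong enough for solvability but falls one order short for second-order convergence, leaving an un-absorbable $(1-\eta^{n+1/2})\Delta F'(\phi^{*,n})$ of size $O(\dt)$. The midpoint trick above is the mechanism by which the cancellation between $F(\phi^{n+1})-F(\phi^n)$ and $F'(\phi^{*,n})(\phi^{n+1}-\phi^n)$ becomes third order, and it is what forces the tight coupling between the a-priori assumption at step $n$ (already needing $O(\dt^2)$ rather than merely $O(\dt^{3/2})$ for $\|\Delta^2 e^k\|$) and the accuracy of $\eta^{n+1/2}$ at step $n+1$; synchronizing these two scales through the mathematical-induction argument is the core technical difficulty.
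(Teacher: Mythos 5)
Your overall architecture matches the paper's: solvability via Theorem~\ref{thm: solvability}, the exact midpoint identity $F(x)-F(y)-F'(\tfrac{x+y}{2})(x-y)=\tfrac{x+y}{8}(x-y)^3$ (your Taylor form with $F'''$ reduces to exactly this for the quartic $F$) to push $g_n(1)$ from $O(\dt^2)$ to $O(\dt^3)$ and hence $|1-\eta^{n+1/2}|$ to $O(\dt^2)$, followed by a high-order energy estimate, Gr\"onwall, and induction. You have also correctly identified the central difficulty.

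However, there is a genuine gap at precisely that central point: your bound on the discrete second difference is circular. The quantity appearing in $g_n(1)$ is $\tfrac12(\tilde\phi^{n+1}-2\phi^n+\phi^{n-1})$ with $\tilde\phi^{n+1}=p^n+\dt q^n$, and you propose to control its error part $\tfrac12(e^{n+1}-2e^n+e^{n-1})$ ``by the induction hypothesis.'' But the induction hypothesis at this stage covers only $k\le n$; the bound $\|\Delta^2 e^{n+1}\|\le\hat C\dt^2$ is exactly what the refined estimate of $\eta^{n+1/2}$ is needed to prove, so it cannot be invoked to establish that refined estimate. (Replacing $\tilde\phi^{n+1}$ by the true $\phi^{n+1}$ does not help, since $\phi^{n+1}$ itself depends on the as-yet-unestimated $\eta^{n+1/2}$.) The paper breaks this circle differently: it first establishes uniform-in-time $H^k$ bounds for the numerical solution and the increment bound $\|\phi^{m+1}-\phi^m\|_{H^\ell}\le D_\ell\dt$ (Lemmas~\ref{prop: energy stab}--\ref{prop: stab-time-1}), and then, in Lemma~\ref{lem 4}, writes $\tilde\phi^{n+1}-\phi^n$ via the resolvent representation of $p^n,q^n$ and $\phi^n-\phi^{n-1}$ via the scheme at the \emph{previous} step, so that the $O(\dt^2)$ bound on $\tfrac{p^n+\dt q^n+\phi^n}{2}-\phi^{*,n}$ uses only data at steps $\le n$ together with the rough estimate $|\eta^{n-1/2}-1|\le 2A_{12}\dt/\gamma$ already available from the solvability analysis. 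Your proposal omits this entire layer (the uniform $H^k$ bounds and the increment estimate), and without it the $O(\dt)\to O(\dt^2)$ upgrade of the multiplier bound does not go through.
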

The proof of this theorem will be carried out through a series of intermediate estimates which we describe below.

\subsection{Energy stability and the $H^1$ estimate} 

The following result could be proved in a straightforward way. 

\begin{lem}  \label{prop: energy stab} 
If the numerical system (\ref{scheme-SAVN-1})-(\ref{scheme-SAVN-2}) is solvable, it is energy stable with respect to the modified energy functional: $\tilde{E} (\phi^{n+1} , \phi^n) := E (\phi^{n+1}) + \frac18 \varepsilon^2 \| \nabla ( \phi^{n+1} - \phi^n ) \|^2$, i.e., the following energy inequality holds: 
\begin{equation} 
    \tilde{E} (\phi^{n+1} , \phi^n) \le \tilde{E} (\phi^n , \phi^{n-1}) ,  \quad \forall n \ge 0  .  
    \label{energy stab-0} 
\end{equation}     
\end{lem}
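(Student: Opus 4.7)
The plan is to deduce this statement as an immediate specialization of the general energy stability theorem already established in Section~\ref{sec:numerical scheme} for the scheme (\ref{gflow-SAVN-1})--(\ref{gflow-SAVN-3}), under the identifications $\mathcal{L} = -\varepsilon^2\Delta$, $\mathcal{G} = -\Delta$ and $F(\phi) = \tfrac14(\phi^2-1)^2$ appropriate to the Cahn--Hilliard equation (\ref{equation-CH}). Under these identifications, the quadratic energy $\tfrac12(\mathcal{L}^{1/2}\phi,\mathcal{L}^{1/2}\phi)$ becomes $\tfrac{\varepsilon^2}{2}\|\nabla\phi\|^2$ and the numerical correction $\tfrac{1}{8}(\mathcal{L}(\phi^{n+1}-\phi^n),\phi^{n+1}-\phi^n)$ becomes $\tfrac{\varepsilon^2}{8}\|\nabla(\phi^{n+1}-\phi^n)\|^2$, so the modified energy $E^{n+1}$ of the general theorem coincides with $\tilde E(\phi^{n+1},\phi^n)$ as defined here.

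To make the argument concrete, I would introduce the discrete chemical potential
\[
\mu^{n+1} := F'(\phi^{*,n})\,\eta^{n+1/2} - \varepsilon^2\Delta\Bigl(\tfrac{3}{4}\phi^{n+1} + \tfrac{1}{4}\phi^{n-1}\Bigr),
\]
so that (\ref{scheme-SAVN-1}) reads $\frac{\phi^{n+1}-\phi^n}{\dt}=\Delta\mu^{n+1}$. Testing this identity against $\mu^{n+1}$ in $L^2$ and using periodicity yields $\bigl(\tfrac{\phi^{n+1}-\phi^n}{\dt},\mu^{n+1}\bigr) = -\|\nabla\mu^{n+1}\|^2$. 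Next I would test the same time increment against the definition of $\mu^{n+1}$. The nonlinear piece equals $\tfrac{\eta^{n+1/2}}{\dt}\bigl(F'(\phi^{*,n}),\phi^{n+1}-\phi^n\bigr)$, and by the Lagrange multiplier equation (\ref{scheme-SAVN-2}) this collapses to the exact increment $\tfrac{1}{\dt}\bigl(F(\phi^{n+1})-F(\phi^n),1\bigr)$. The diffusion piece, after integration by parts, becomes $\tfrac{\varepsilon^2}{\dt}\bigl(\nabla(\phi^{n+1}-\phi^n),\nabla(\tfrac34\phi^{n+1}+\tfrac14\phi^{n-1})\bigr)$, and applying the algebraic identity (\ref{in:e:stable}) (specialized to $\mathcal{L}=-\varepsilon^2\Delta$) splits it into the telescoping difference contributing to $\tilde E(\phi^{n+1},\phi^n)-\tilde E(\phi^n,\phi^{n-1})$ plus the non-negative remainder $\tfrac{\varepsilon^2}{8\dt}\|\nabla(\phi^{n+1}-2\phi^n+\phi^{n-1})\|^2$.

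Combining the two identities gives
\[
\tilde E(\phi^{n+1},\phi^n) - \tilde E(\phi^n,\phi^{n-1}) + \tfrac{\varepsilon^2}{8}\|\nabla(\phi^{n+1}-2\phi^n+\phi^{n-1})\|^2 = -\dt\,\|\nabla\mu^{n+1}\|^2,
\]
from which (\ref{energy stab-0}) follows at once, since both the remainder on the left and the dissipation on the right are non-positive after rearranging. There is no genuine obstacle here: the proof is essentially bookkeeping, the crucial ingredient being that the Lagrange multiplier constraint (\ref{scheme-SAVN-2}) is designed precisely so that the nonlinear inner product $\eta^{n+1/2}\bigl(F'(\phi^{*,n}),\phi^{n+1}-\phi^n\bigr)$ is an \emph{exact} increment of $\int_\Omega F(\phi)\,d\bx$, which is the feature that enables dissipation of the original energy rather than a modified surrogate. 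The hypothesis that the nonlinear system is solvable is needed only to guarantee that $\eta^{n+1/2}$ exists and (\ref{scheme-SAVN-2}) may be invoked; no smallness condition on $\dt$ is required.
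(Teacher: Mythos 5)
Your proof is correct and follows essentially the same route as the paper: the multiplier constraint (\ref{scheme-SAVN-2}) turns the nonlinear pairing into the exact increment of $\int_\Omega F(\phi)\,d\bx$, and the algebraic identity for the $\tfrac34/\tfrac14$ average produces the telescoping modified energy plus a non-negative remainder. The only (cosmetic) difference is the choice of test function — you pair the scheme with $\mu^{n+1}$ as in the general Theorem 2.1, while the paper tests with $(-\Delta)^{-1}(\phi^{n+1}-\phi^n)$; since $\phi^{n+1}-\phi^n=\dt\,\Delta\mu^{n+1}$, the resulting dissipation terms $\dt\,\|\nabla\mu^{n+1}\|^2$ and $\tfrac1\dt\|\phi^{n+1}-\phi^n\|_{H^{-1}}^2$ coincide.
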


\begin{proof} 
Taking an inner product with (\ref{scheme-SAVN-1}) by $(-\Delta)^{-1} (\phi^{n+1} - \phi^n)$ gives  
\begin{equation} 
   ( F' (\phi^{*,n}) \eta^{n+1/2} , \phi^{n+1} - \phi^n )    
  - \varepsilon^2  \Big( \Delta ( \frac34 \phi^{n+1} + \frac14 \phi^{n-1} )  , 
    \phi^{n+1} - \phi^n \Big)   = - \frac{1}{\dt} \| \phi^{n+1} - \phi^n \|_{H^{-1}}^2 . 
    \label{energy stab-1} 
\end{equation}         
Meanwhile, the following estimates are available: 
\begin{eqnarray*} 
  &&
   ( F' (\phi^{*,n}) \eta^{n+1/2} , \phi^{n+1} - \phi^n )   
    = \Big( F (\phi^{n+1} ) - F (\phi^n) , 1 \Big)   ,  \quad 
    \mbox{(by \eqref{scheme-SAVN-2})} ,   \label{energy stab-2-1}    
\\
  && 
     - \Big( \Delta ( \frac34 \phi^{n+1} + \frac14 \phi^{n-1} )  , 
    \phi^{n+1} - \phi^n \Big)  
    =  \Big( \nabla ( \frac34 \phi^{n+1} + \frac14 \phi^{n-1} )  , 
    \nabla ( \phi^{n+1} - \phi^n)  \Big)     \nonumber 
\\
  && \qquad 
   \ge \frac12 ( \| \nabla \phi^{n+1} \|^2 - \| \nabla \phi^n \|^2 ) 
   + \frac18 ( \| \nabla ( \phi^{n+1} - \phi^n ) \|^2 
   - \| \nabla ( \phi^n - \phi^{n-1} ) \|^2  )   . 
   \label{energy stab-2-2} 
\end{eqnarray*}  
Going back (\ref{energy stab-1}), we arrive at the desired inequality (\ref{energy stab-0}).               
\end{proof} 

As a result of this result, we obtain a uniform bound of the original energy functional $E (\phi^n)$, for any $n \ge 1$: 
\begin{eqnarray*} 
   E (\phi^n ) \le \tilde{E} (\phi^n , \phi^{n-1})  \le ... 
   \le \tilde{E} (\phi^0, \phi^{-1}) = E (\phi^0) := C_0 ,  \quad 
   \mbox{by taking $\phi^{-1} = \phi^0$} .  \label{H1 est-1} 
\end{eqnarray*}     
Meanwhile, since $E (\phi^n) \ge \frac{\varepsilon^2}{2} \| \nabla \phi^n \|^2$, we get 
\begin{eqnarray*} 
   \frac{\varepsilon^2}{2} \| \nabla \phi^n \|^2 \le E (\phi^n ) \le C_0 ,  \quad 
   \mbox{so that}  \, \, 
   \| \nabla \phi^n \| \le (2 C_0)^{1/2} \varepsilon^{-1} .   
     \label{H1 est-2} 
\end{eqnarray*}
And also, the numerical solution (\ref{scheme-SAVN-1})-(\ref{scheme-SAVN-2}) is mass conservative, so that 
\begin{eqnarray*} 
  \overline{\phi^n} = \overline{\phi^{n-1}} = ... = \overline{\phi^0} := \beta_0 ,  \quad 
  \forall n \ge 0 . \label{H1 est-3} 
\end{eqnarray*}
In turn, an application of Poincar\'e inequality yields a uniform in time $H^1$ bound for the numerical solution:  
\begin{eqnarray} 
  \| \phi^n \|_{H^1} \le C \Big( | \overline{\phi^n} |  + \| \nabla \phi^n \|  \Big) 
  \le C_1 := C ( | \beta_0 | + (2 C_0)^{1/2} \varepsilon^{-1} ) , \quad 
  \forall n \ge 0 . \label{H1 est-4} 
\end{eqnarray}  
We notice that $C_1$ is uniform in time, while it depends on $\varepsilon^{-1}$ in a polynomial pattern.

\subsection{The $\ell^\infty(0,T; H^2)$ estimate for the numerical solution} 

\begin{lem}  \label{prop: H2 est} 
Under the  assumption (\ref{assumption-H-0}) with $\gamma \ge 4\dt^{\frac 14}$ and the   {\it a priori}  assumption (\ref{a priori-1}), we have 
\begin{equation} 
    \| \phi^n \|_{H^2} \le C_2 ,  
    \quad \forall n \ge 1 ,  
    \label{H2 est-0} 
\end{equation}     
in which $C_2$ depends on $\varepsilon^{-1}$ in a polynomial pattern, while it is independent on $\dt$ and $T$. 
\end{lem}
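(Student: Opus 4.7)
The plan is to reduce the $H^2$ bound to a direct consequence of the a-priori assumption~(\ref{a priori-1}) together with the exact-solution regularity~(\ref{exact-bound-1}), in the same spirit as the $H^6$ bound~(\ref{a priori-2}) already derived in Section~\ref{sec:solvability}. Since the lemma explicitly includes both the regularity hypothesis on $\Phi$ and the a-priori assumption on $e^k$ for $k \le n$, the $H^2$ control on $\phi^n$ should follow essentially for free from a triangle-inequality argument in a suitable Sobolev norm, with no invocation of the scheme~(\ref{scheme-SAVN-1})--(\ref{scheme-SAVN-2}) itself.

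First I would split $\phi^n = \Phi^n - e^n$ and apply the triangle inequality to get
\[
\|\phi^n\|_{H^2} \le \|\Phi^n\|_{H^2} + \|e^n\|_{H^2}.
\]
For the exact-solution term, the regularity bound (\ref{exact-bound-1}) gives $\|\Phi^n\|_{H^2} \le \|\Phi^n\|_{H^8} \le A^*$. For the error term, the continuous embedding $H^4 \hookrightarrow H^2$ together with the a-priori assumption (\ref{a priori-1}) yields $\|e^n\|_{H^2} \le \|e^n\|_{H^4} \le \dt^{3/2}$. Combining the two estimates and choosing $\dt$ small enough that $\dt^{3/2} \le 1$, I would obtain $\|\phi^n\|_{H^2} \le A^* + 1 =: C_2$, exactly as desired.

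The claimed polynomial dependence of $C_2$ on $\varepsilon^{-1}$ then follows from the standard regularity theory for the Cahn-Hilliard equation, which guarantees that the constant $A^*$ in (\ref{exact-bound-1}) grows polynomially in $\varepsilon^{-1}$; independence from $\dt$ and from the final time $T$ is immediate, because the a-priori bound (\ref{a priori-1}) is uniform in both. Consequently, I do not expect any substantive obstacle within this particular proof: it is essentially a reformulation of the already-assumed a-priori bound in a weaker Sobolev norm. The real difficulty of the overall induction lies downstream, in recovering (\ref{a priori-1}) itself at the next time step through the optimal convergence estimate of Theorem~\ref{thm: convergence}; Lemma~\ref{prop: H2 est} merely packages the uniform $H^2$ (and hence, via 3-D Sobolev embedding, $L^\infty$) control on $\phi^n$ that will be needed to handle the cubic nonlinearity $F'(\phi) = \phi^3 - \phi$ in that subsequent error analysis.
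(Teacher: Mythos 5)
Your argument is correct for the statement as written, but it is a genuinely different (and much shorter) route than the paper's, and the difference matters downstream. You obtain the bound purely from the triangle inequality, $\| \phi^n \|_{H^2} \le \| \Phi^n \|_{H^2} + \| e^n \|_{H^2} \le A^* + \dt^{3/2}$, which is legitimate at every index covered by the {\it a-priori} assumption \eqref{a priori-1} --- indeed the paper uses exactly this device in \eqref{a priori-2} to get the $H^6$ bound $\tilde{A}_1 = A^*+1$. The paper's own proof of Lemma~\ref{prop: H2 est} never touches the error function: it invokes solvability (which is why the hypothesis $\gamma \ge 4 \dt^{1/4}$ appears, and which your proof does not need), starts from the uniform $H^1$ bound \eqref{H1 est-4} supplied by the energy stability of Lemma~\ref{prop: energy stab}, tests \eqref{scheme-SAVN-1} with $2\Delta^2(\phi^{n+1}-\phi^n)$, interpolates the nonlinear term between $H^1$ and $H^4$, and closes a discrete Gronwall recursion for a modified $H^2$ energy $G^n$. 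What that longer argument buys is twofold. First, it is the template for Lemma~\ref{prop: Hm est}, the uniform $H^k$ bound for all $k\ge 3$, where your shortcut genuinely fails: \eqref{a priori-1} controls $\| e^k \|_{H^4}$ and $\| e^k \|_{H^6}$ only, so the triangle inequality gives nothing for $\| \phi^n \|_{H^7}$ or $\| \phi^n \|_{H^8}$, yet the constant $C_8$ (i.e., $\| \Delta^4 \phi^n \| \le C_8$ in \eqref{lem 4-2-1}) is indispensable in Lemma~\ref{lem 4}. Second, the paper's $C_2$ is assembled from the initial energy and $\varepsilon^{-1}$, so its polynomial dependence on $\varepsilon^{-1}$ is explicit; your constant is $A^*+1$, and your appeal to ``standard regularity theory'' to claim $A^*$ grows polynomially in $\varepsilon^{-1}$ is not something the paper assumes or proves --- \eqref{exact-bound-1} is a bare regularity hypothesis with unspecified $\varepsilon$-dependence. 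Neither point invalidates your proof of this particular lemma, but the same one-line argument cannot be recycled for the companion $H^k$ estimate that the error analysis actually relies on.
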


\begin{proof} 
	First, it is noticed that, by the assumptions and Theorem 3.1, the scheme (\ref{scheme-SAVN-1})-(\ref{scheme-SAVN-2}) has a unique solution in $1 - \dt^{1/2} \le \eta \le 1+ \dt^{1/2}$.
 Taking an inner product with (\ref{scheme-SAVN-1}) by $2 \Delta^2 (\phi^{n+1} - \phi^n)$, we obtain 
\begin{eqnarray} 
  &&
  \| \Delta \phi^{n+1} \|^2 - \| \Delta \phi^n \|^2 
  + \| \Delta ( \phi^{n+1} - \phi^n ) \|^2  
  +  \varepsilon^2 \dt ( \Delta^2 \phi^{n+1}, 
  \Delta^2 ( \frac32 \phi^{n+1} + \frac12 \phi^{n-1} ) )   \nonumber 
\\
  &=& 
   2 \eta^{n+1/2} \dt ( \Delta F' (\phi^{*,n}) , \Delta^2 \phi^{n+1}  )   . 
    \label{H2 est-1} 
\end{eqnarray} 
The inner product associated surface diffusion could be analyzed as follows: 
	\begin{eqnarray}
 && (  \Delta^2 \phi^{n+1}  ,  \Delta^2 ( \frac32 \phi^{n+1} + \frac12 \phi^{n-1} ) ) 
   =  \frac32  \| \Delta^2 \phi^{n+1} \|^2   
   + \frac12  (  \Delta^2 \phi^{n+1}  , \Delta^2 \phi^{n-1} )
 	\nonumber  
	\\ 
&&\ge  \frac32  \|  \Delta^2 \phi^{n+1} \|^2  
- \frac12  \Big( \frac12  \| \Delta^2 \phi^{n+1} \|^2 
 +\frac12 \| \Delta^2 \phi^{n-1} \|^2  \Big)
	\nonumber 
\ge  \frac54  \|  \Delta^2 \phi^{n+1} \|^2 -  \frac14 \| \Delta^2 \phi^{n-1} \|^2 .
	\label{H2 est-2}    
	\end{eqnarray}
For the right hand side in \eqref{H2 est-1}, we recall the expansion (\ref{lem 1-1-6}) and apply the H\"older inequality: 
\begin{eqnarray} 
   \| \Delta F' (\phi^{*,n}) \| \le ( 3 \| \phi^{*,n} \|_{L^\infty}^2 +1 ) \| \Delta \phi^{*,n} \|  
   + 6 \| \phi^{*,n} \|_{L^\infty} \cdot \| \nabla \phi^{*,n} \|_{L^4}^2 . 
   \label{H2 est-3-1}    
\end{eqnarray}   
Meanwhile, the following estimates are available, based on Sobolev embedding and weighted inequalities, as well as the uniform in time estimate (\ref{H1 est-4}):  
\begin{eqnarray} 
  \| \phi^{*,n} \|_{L^\infty} 
  &\le& C \|  \phi^{*,n} \|_{H^1}^{5/6} \cdot  \|  \phi^{*,n} \|_{H^4}^{1/6}  
  \le C \|  \phi^{*,n} \|_{H^1}^{5/6} 
   \cdot  ( \|  \phi^{*,n} \|_{H^1} + \|  \Delta^2 \phi^{*,n} \| )^{1/6}   \nonumber 
\\
  &\le&
    C C_1^{5/6} \cdot  ( C_1 + \|  \Delta^2 \phi^{*,n} \| )^{1/6}  ,  \label{H2 est-3-2}     
\\
  \| \Delta \phi^{*,n} \|
  &\le&  \| \nabla \phi^{*,n} \|^{2/3} \cdot  \|  \Delta^2 \phi^{*,n} \|^{1/3}  
  \le C C_1^{2/3} \cdot \|  \Delta^2 \phi^{*,n} \|^{1/3}  ,  \label{H2 est-3-3}    
\\
  \| \nabla \phi^{*,n} \|_{L^4} 
  &\le&  C \|  \nabla \phi^{*,n} \|_{H^{3/4}} 
  \le C \|  \nabla \phi^{*,n} \|^{3/4} \cdot  \|  \Delta \phi^{*,n} \|_{H^4}^{1/4}  
  \le C C_1^{3/4} \cdot \|  \Delta^2 \phi^{*,n} \|^{1/4}  .  \label{H2 est-3-4}       
\end{eqnarray} 
Going back (\ref{H2 est-3-1}), we get   
\begin{eqnarray} 
   \| \Delta F' (\phi^{*,n}) \| \le 
   C C_1^3 + C C_1^{7/3} \cdot \|  \Delta^2 \phi^{*,n} \|^{2/3}    
   + C C_1^{2/3} \cdot \|  \Delta^2 \phi^{*,n} \|^{1/3}   . 
   \label{H2 est-3-5}    
\end{eqnarray}   
In turn, an application of Cauchy inequality implies that 
\begin{eqnarray} 
  &&
   2 \eta^{n+1/2} ( \Delta F' (\phi^{*,n}) , \Delta^2 \phi^{n+1}  )  
   \le 3  \| \Delta F' (\phi^{*,n}) \| \cdot \|  \Delta^2 \phi^{n+1}  \|   \nonumber 
\\
  &\le& 
    9 \varepsilon^{-2} \| \Delta F' (\phi^{*,n}) \|^2    
   + \frac14 \varepsilon^2  \|  \Delta^2 \phi^{n+1}  \|^2   \nonumber 
\\
  &\le& 
  C \varepsilon^{-2} ( C_1^6 + C_1^{14/3} \cdot \|  \Delta^2 \phi^{*,n} \|^{4/3}    
   + C_1^{4/3} \cdot \|  \Delta^2 \phi^{*,n} \|^{2/3}  ) 
   + \frac14 \varepsilon^2  \|  \Delta^2 \phi^{n+1}  \|^2   \nonumber 
\\
  &\le& 
  C_{\varepsilon}^{(1)}  
  +  \frac{\varepsilon^2}{10} \|  \Delta^2 \phi^{*,n} \|^2     
   + \frac14 \varepsilon^2  \|  \Delta^2 \phi^{n+1}  \|^2    \nonumber 
\\
  &\le& 
  C_{\varepsilon}^{(1)}  
  +  \frac{9 \varepsilon^2}{20} \|  \Delta^2 \phi^{n} \|^2     
  +  \frac{\varepsilon^2}{20} \|  \Delta^2 \phi^{n-1} \|^2       
   + \frac14 \varepsilon^2  \|  \Delta^2 \phi^{n+1}  \|^2  ,  \label{H2 est-3-6}      
\end{eqnarray}     
in which the Young's inequality has been extensively applied in the fourth step, and the last step is based on the fact that $\|  \Delta^2 \phi^{*,n} \|^2  \le  
\frac92 \|  \Delta^2 \phi^{n} \|^2  +  \frac12 \|  \Delta^2 \phi^{n-1} \|^2$. We also notice that $C_{\varepsilon}^{(1)} $ depends on $C_1$ and $\varepsilon^{-1}$ in a polynomial pattern.    

Subsequently, a substitution of (\ref{H2 est-2}) and (\ref{H2 est-3-6}) into (\ref{H2 est-1}) leads to 
\begin{equation*} 
  \| \Delta \phi^{n+1} \|^2 - \| \Delta \phi^n \|^2  
  +  \varepsilon^2 \dt \| \Delta^2 \phi^{n+1} \|^2  
  \le  
  C_{\varepsilon}^{(1)}  \dt 
  +  \frac{9 \varepsilon^2}{20} \dt \|  \Delta^2 \phi^{n} \|^2     
  +  \frac{3 \varepsilon^2}{10} \dt \|  \Delta^2 \phi^{n-1} \|^2   . 
    \label{H2 est-4-1} 
\end{equation*} 
In fact, this inequality could be rewritten as 
\begin{equation*} 
\begin{aligned} 
  &
  \| \Delta \phi^{n+1} \|^2 
  +  \frac45 \varepsilon^2 \dt \| \Delta^2 \phi^{n+1} \|^2  
  +  \frac{3}{10} \varepsilon^2 \dt \| \Delta^2 \phi^n \|^2    
  +  \frac15 \varepsilon^2 \dt \| \Delta^2 \phi^{n+1} \|^2  
  +  \frac{1}{20} \varepsilon^2 \dt \| \Delta^2 \phi^n \|^2     
\\
  \le&  
  \| \Delta \phi^n \|^2    
  +  \frac45 \varepsilon^2 \dt \| \Delta^2 \phi^n \|^2  
  +  \frac{3}{10} \varepsilon^2 \dt \| \Delta^2 \phi^{n-1} \|^2   
   + C_{\varepsilon}^{(1)}  \dt  . 
\end{aligned} 
  \label{H2 est-4-2} 
\end{equation*} 
With an introduction of a modified $H^2$ energy
\begin{eqnarray*} 
  G^n := \| \Delta \phi^n \|^2    
  +  \frac45 \varepsilon^2 \dt \| \Delta^2 \phi^n \|^2  
  +  \frac{3}{10} \varepsilon^2 \dt \| \Delta^2 \phi^{n-1} \|^2 , 
  \label{H2 est-4-3} 
\end{eqnarray*}     
we get 
\begin{eqnarray*} 
  G^{n+1} + B_0 \varepsilon^2 \dt G^{n+1} \le  G^n 
   + C_{\varepsilon}^{(1)}  \dt  ,  
    \label{H2 est-4-4} 
\end{eqnarray*} 
in which the following elliptic regularity has been used: 
\begin{eqnarray*} 
  B_1 \| \Delta f \|^2 \le \| \Delta^2 f \|^2 ,  \quad \mbox{and}  \, \, \, 
  B_0 = \min \Big( \frac{B_1}{10} , \frac18 \Big) .  \label{H2 est-4-5} 
\end{eqnarray*}   
An application of the induction argument implies that 
\begin{eqnarray*} 
   G^{n+1} \le  ( 1 + B_0 \varepsilon^2 \dt )^{-(n+1)} G^0 
   +  \frac{C_{\varepsilon}^{(1)} }{B_0 \varepsilon^2} .  \label{H2 est-4-6} 
\end{eqnarray*}   
Of course, we could introduce a uniform in time quantity $B_2^* := G^0 
   +  \frac{C_{\varepsilon}^{(1)} }{B_0 \varepsilon^2}$, so that $\| \Delta \phi^n \|^2 \le G^n \le B_2^*$ for any $n \ge 0$. In turn, an application of the elliptic regularity reveals that 
\begin{eqnarray*} 
     \| \phi^n \|_{H^2} \le C \Big( | \overline{\phi^k} | + \| \Delta \phi^k \| \Big) 
     \le C ( | \beta_0 | + ( B_2^* )^{1/2} ) := C_2 ,  \quad \forall n \ge 0 . \label{H2 est-4-7} 
\end{eqnarray*}   
in which the uniform in time constant $C_2$ depends on $\varepsilon^{-1}$ in a polynomial pattern. This finishes the proof of Proposition~\ref{prop: H2 est}. 
\end{proof} 

Using similar tools, a uniform-in-time $H^k$ bound for the numerical solution could be established, for any $k \ge 3$, by taking inner product with (\ref{scheme-SAVN-1}) by $2 (-\Delta)^k \phi^{n+1}$, and performing the associated estimates. The details are left to interested readers.

\begin{lem}  \label{prop: Hm est} 
Under the assumptions of Lemma ~\ref{prop: H2 est}, we have for any $k\ge 3$,
\begin{equation} 
    \| \phi^n \|_{H^k} \le C_k ,  
    \quad \forall n \ge 1  ,  
    \label{Hm est-0} 
\end{equation}     
in which $C_k$ depends on $\varepsilon^{-1}$ in a polynomial power, while it is independent on $\dt$ and $T$. 
\end{lem}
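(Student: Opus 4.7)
The plan is to proceed by induction on $k\ge 2$, with Lemma~\ref{prop: H2 est} serving as the base case. Assume uniform-in-$n$ bounds $\|\phi^m\|_{H^j}\le C_j$ for all $m\ge 0$ and $1\le j\le k-1$; the goal is to establish the analogous $H^k$ bound. Following the hint given in the paper, I would take the $L^2$ inner product of~\eqref{scheme-SAVN-1} with $2(-\Delta)^k\phi^{n+1}$ (interpreted in the Fourier sense for odd $k$ in the periodic setting). The time-difference term produces a telescoping identity involving $\|(-\Delta)^{k/2}\phi^{n+1}\|^2-\|(-\Delta)^{k/2}\phi^n\|^2+\|(-\Delta)^{k/2}(\phi^{n+1}-\phi^n)\|^2$, and the biharmonic dissipation, after the same $\tfrac{3}{4}/\tfrac{1}{4}$ splitting as in~\eqref{H2 est-2}, contributes a coercive term $\tfrac{5}{4}\varepsilon^2\dt\|(-\Delta)^{(k+2)/2}\phi^{n+1}\|^2-\tfrac{1}{4}\varepsilon^2\dt\|(-\Delta)^{(k+2)/2}\phi^{n-1}\|^2$.

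The main technical obstacle is estimating the nonlinear coupling $2\eta^{n+1/2}\dt(\Delta F'(\phi^{*,n}),(-\Delta)^k\phi^{n+1})$. I would expand $\Delta F'(\phi)=(3\phi^2-1)\Delta\phi+6\phi|\nabla\phi|^2$, integrate by parts to balance derivative orders between the two factors, and invoke Moser and Gagliardo--Nirenberg interpolation to bound the result in terms of $\|\phi^{*,n}\|_{H^{k-1}}$ (uniformly controlled by the inductive hypothesis) together with a controllably small fraction of the top seminorm $\|(-\Delta)^{(k+2)/2}\phi^{*,n}\|^2$. The latter is then handled via $\|(-\Delta)^{(k+2)/2}\phi^{*,n}\|^2\le\tfrac{9}{2}\|(-\Delta)^{(k+2)/2}\phi^n\|^2+\tfrac{1}{2}\|(-\Delta)^{(k+2)/2}\phi^{n-1}\|^2$ and absorbed into the weighted-$\dt$ terms at the two previous time steps, in direct analogy with the derivation leading to~\eqref{H2 est-3-6}. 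Young's inequality leaves a residual of the form $C_\varepsilon^{(k)}\dt+\tfrac{9}{20}\varepsilon^2\dt\|(-\Delta)^{(k+2)/2}\phi^n\|^2+\tfrac{1}{20}\varepsilon^2\dt\|(-\Delta)^{(k+2)/2}\phi^{n-1}\|^2+\tfrac{1}{4}\varepsilon^2\dt\|(-\Delta)^{(k+2)/2}\phi^{n+1}\|^2$, with $C_\varepsilon^{(k)}$ depending polynomially on $\varepsilon^{-1}$ and on $C_{k-1}$.

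The remainder of the argument mirrors the $H^2$ proof verbatim: introduce the modified $H^k$-energy
\begin{equation*}
G_k^n:=\|(-\Delta)^{k/2}\phi^n\|^2+\tfrac{4}{5}\varepsilon^2\dt\|(-\Delta)^{(k+2)/2}\phi^n\|^2+\tfrac{3}{10}\varepsilon^2\dt\|(-\Delta)^{(k+2)/2}\phi^{n-1}\|^2,
\end{equation*}
derive $(1+B_0\varepsilon^2\dt)G_k^{n+1}\le G_k^n+C_\varepsilon^{(k)}\dt$ using the elliptic regularity $\|(-\Delta)^{(k+2)/2}\phi\|^2\ge B_1\|(-\Delta)^{k/2}\phi\|^2$, iterate to obtain $G_k^n\le G_k^0+C_\varepsilon^{(k)}/(B_0\varepsilon^2)$ uniformly in $n$, and combine with mass conservation $\overline{\phi^n}=\beta_0$ and Poincar\'e's inequality to conclude $\|\phi^n\|_{H^k}\le C_k$ for all $n\ge 0$. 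The hardest part is the Moser--Gagliardo--Nirenberg bookkeeping in the second step, where the integration-by-parts split and the interpolation exponents must be chosen carefully so that the constant $C_\varepsilon^{(k)}$ depends only polynomially (and not exponentially) on $\varepsilon^{-1}$, preserving the qualitative nature of the bound as $k$ increases.
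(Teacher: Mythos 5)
Your proposal follows essentially the same route the paper indicates: the paper's own "proof" is a one-sentence sketch (test \eqref{scheme-SAVN-1} with $2(-\Delta)^k\phi^{n+1}$ and repeat the $H^2$ argument, details left to the reader), and your induction on $k$, the $\tfrac34/\tfrac14$ splitting of the biharmonic term, the interpolation-based absorption of the nonlinear term, and the modified $H^k$ energy are exactly the intended elaboration. The outline is correct and consistent with the paper's approach.
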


\subsection{Estimate for $\| \phi^{m+1} - \phi^m \|_{H^\ell}$ } 

The following estimate is needed in the later analysis. 

	\begin{lem} \label{prop: stab-time-1}  
Under the assumptions of Lemma ~\ref{prop: H2 est}, we have 
	\begin{eqnarray}
\max_{0\le m \le M-1} \| \phi^{m+1} - \phi^m \|_{H^\ell} \le D_{\ell} \dt  ,
	\label{time-stability-1-bound}
	\end{eqnarray}
where $D_\ell>0$ is a constant independent of $\dt$ and $T$, dependent on $\varepsilon^{-1}$ in a polynomial style. 
	\end{lem}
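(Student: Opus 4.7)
The plan is to exploit the explicit decomposition $\phi^{n+1} = p^n + \eta^{n+1/2}\dt\, q^n$ already used in the solvability analysis, combined with the uniform $H^k$ bounds from Lemma~\ref{prop: Hm est}. Writing
\begin{equation*}
\phi^{n+1} - \phi^n = (p^n - \phi^n) + \eta^{n+1/2}\dt\, q^n,
\end{equation*}
it suffices to produce $H^\ell$ bounds of order $O(\dt)$ for each of the two terms on the right.

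For the first term, I would use the identity \eqref{lem 1-1-1-a}, namely
\begin{equation*}
p^n - \phi^n = -\varepsilon^2\dt\,\bigl(I + \tfrac34\varepsilon^2\dt\Delta^2\bigr)^{-1}\Delta^2\bigl(\tfrac34\phi^n + \tfrac14\phi^{n-1}\bigr).
\end{equation*}
Because the Fourier multiplier $(I + \tfrac34\varepsilon^2\dt\Delta^2)^{-1}$ has symbol bounded by $1$ (in fact it commutes with any derivative operator and is a contraction on every $H^s$), one obtains
\begin{equation*}
\|p^n - \phi^n\|_{H^\ell} \le \varepsilon^2 \dt \bigl(\tfrac34\|\phi^n\|_{H^{\ell+4}} + \tfrac14\|\phi^{n-1}\|_{H^{\ell+4}}\bigr),
\end{equation*}
which is $O(\dt)$ thanks to the uniform $H^{\ell+4}$-bound $C_{\ell+4}$ from Lemma~\ref{prop: Hm est}.

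For the second term, the formula \eqref{nonlinear-1-2} gives $q^n = (I + \tfrac34\varepsilon^2\dt\Delta^2)^{-1}\Delta F'(\phi^{*,n})$, so again by the contraction property of the resolvent,
\begin{equation*}
\|q^n\|_{H^\ell} \le \|\Delta F'(\phi^{*,n})\|_{H^\ell} \le C\bigl(\|\phi^{*,n}\|_{H^{\ell+2}}^{3} + \|\phi^{*,n}\|_{H^{\ell+2}}\bigr),
\end{equation*}
where the last inequality follows from the expansion $\Delta F'(\phi) = (3\phi^2 - 1)\Delta\phi + 6\phi|\nabla\phi|^2$ followed by H\"older/Sobolev embedding, exactly as in \eqref{lem 1-1-7}. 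The $H^{\ell+2}$ norm of $\phi^{*,n} = \tfrac32\phi^n - \tfrac12\phi^{n-1}$ is controlled by $2C_{\ell+2}$ via Lemma~\ref{prop: Hm est}, giving a $\dt$-independent bound for $\|q^n\|_{H^\ell}$.

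Combining the two estimates and using the bound $|\eta^{n+1/2}| \le 1 + \dt^{1/2} \le 2$ from Theorem~\ref{thm: solvability} yields
\begin{equation*}
\|\phi^{n+1} - \phi^n\|_{H^\ell} \le D_\ell \dt
\end{equation*}
with a constant $D_\ell$ depending only on $\varepsilon^{-1}$ and on $C_{\ell+4}$. There is no serious obstacle here: the argument is essentially a higher-regularity version of the $L^\infty$ bounds \eqref{lem 1-0-1} from Lemma~\ref{lem 1}, and all the heavy lifting (the uniform $H^k$ control of $\phi^n$ that is not available at the time of Lemma~\ref{lem 1}) has already been carried out in Lemma~\ref{prop: Hm est}. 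The only mild care needed is to track the polynomial dependence on $\varepsilon^{-1}$ through the constants $C_{\ell+4}$ and the cubic nonlinearity, which is handled by the same Young's inequality splitting used in the proof of Lemma~\ref{prop: H2 est}.
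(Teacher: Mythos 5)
Your proof is correct and follows essentially the same route as the paper's: both reduce the bound to the uniform $H^{\ell+2}$/$H^{\ell+4}$ control of the numerical solution from Lemma~\ref{prop: Hm est}, combined with the algebra/Sobolev estimate for $\Delta F'(\phi^{*,n})$. The only cosmetic difference is that the paper reads the increment directly off the scheme as $\phi^{m+1}-\phi^m=\dt\,\Delta\mu^{m+1/2}$ and estimates $\|\Delta\mu^{m+1/2}\|_{H^\ell}$ without invoking the resolvent, whereas you route the same two estimates through the representation $\phi^{n+1}=p^n+\eta^{n+1/2}\dt\,q^n$ and the contraction property of $\bigl(I+\tfrac34\varepsilon^2\dt\Delta^2\bigr)^{-1}$.
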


\begin{proof}
The numerical scheme \eqref{scheme-SAVN-1} implies that 
 	\begin{equation}
 \| \phi^{m+1}-\phi^m \|_{H^\ell} = \dt \| \Delta \mu^{m+1/2}  \|_{H^\ell} , 
  \quad  \mu^{m+1/2} = F' (\phi^{*,m}) \eta^{m+1/2} 
  - \varepsilon^2  \Delta ( \frac34 \phi^{m+1} + \frac14 \phi^{m-1} )  . 
   \label{est-time-1-1} 
    \end{equation}    
By the expansion for $\mu^{m+1/2}$, the following estimates could be derived, helped by repeated applications of the H\"older inequality and Sobolev embedding: 
	\begin{align} 
\nrm{ \Delta F' (\phi^{*,m}) }_{H^\ell} \le& C \nrm{  F' (\phi^{*,m})  }_{H^{\ell+2} } 
   \le C  \nrm{ \phi^{*,m}  }_{H^{\ell+2} }^3    \nonumber 
\\
  \le& C  \left( \nrm{ \phi^m }_{H^{\ell+2} }^3 
+  \nrm{ \phi^{m-1} }_{H^{\ell+2} }^3  \right) \le C C_{\ell+2}^3 , 
	\label{est-time-1-3} 
	\\
\nrm{ \Delta^2 ( \frac34 \phi^{m+1} + \frac14 \phi^{m-1} )  }_{H^\ell} \le& C \left(  \nrm{ \phi^{m+1} }_{H^{\ell+4} } +  \nrm{ \phi^{m-1} }_{H^{\ell+4} }  \right) 
\le C C_{\ell+4} , 
	\label{est-time-1-5} 
	\end{align}
with the uniform-in-time estimates (\ref{Hm est-0}) used. Consequently, a substitution of (\ref{est-time-1-3})-(\ref{est-time-1-5}) into (\ref{est-time-1-1}) yields 
	\begin{equation*} 
\nrm{ \phi^{m+1}-\phi^m }_{H^\ell} \le  D_{\ell} \dt ,  \quad    
   D_{\ell} := C  \left(  C_{\ell+2}^3 + \varepsilon ^2 C_{\ell+4}  \right) .  
	\label{est-time-1-6} 
    \end{equation*} 
Note that $D_{\ell}$ depends on $\varepsilon^{-1}$ in a polynomial form, since both $C_{\ell+2}$ and $C_{\ell+4}$ do as well. This completes the proof of Proposition~\ref{prop: Hm est}. 
	\end{proof}

\subsection{A refined estimate for $\eta^{n+1/2}$} 

An $O (\dt^{1/2})$ estimate have been derived for $| \eta^{n+1/2} |$ in Theorem~\ref{thm: solvability}. 
 In fact, by the estimate (\ref{solvability-1}), we see that
 \begin{eqnarray} 
 	| \eta^{n+1/2} -1 |  \le \frac{A_{12} \dt^2}{\frac{ |S_n| }{2} \dt} 
 	=  \frac{2 A_{12} \dt}{|S_n|}\le  \frac{2 A_{12} \dt}{\gamma} . 
 	\label{eta est-prelim-2} 
 \end{eqnarray} 
Again, this rough estimate is not sufficient for an optimal rate of convergence. We aim to improve this estimate so that $\eta^{n+1/2} -1 = O (\dt^2)$. The following preliminary lemma is needed.

\begin{lem}  \label{lem 4} 
Under the assumptions of Lemma ~\ref{prop: H2 est},  we have 
	\begin{eqnarray}
  \nrm{ \frac{p^n + \dt q^n + \phi^n}{2} - \phi^{*,n} }  \le Q_0 \dt^2 ,  \label{lem 4-0} 
	\end{eqnarray}
where $Q_0$ is a constant independent of $\dt$, dependent on the exact solution $\Phi$ and $\varepsilon^{-1}$. 
\end{lem}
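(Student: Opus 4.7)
The plan is to manipulate the target quantity algebraically using the explicit representations (\ref{nonlinear-1-1})--(\ref{nonlinear-1-2}) of $p^n, q^n$ together with the scheme at the previous time step, rather than comparing directly to the exact solution. Set $\mathcal{P} := (I + \tfrac34 \varepsilon^2 \dt \Delta^2)^{-1}$, and introduce the ``virtual'' chemical potentials
\begin{equation*}
\tilde\mu^n := F'(\phi^{*,n}) - \varepsilon^2 \Delta\bigl(\tfrac34 \phi^n + \tfrac14 \phi^{n-1}\bigr), \quad \mu^{n-1/2} := \eta^{n-1/2} F'(\phi^{*,n-1}) - \varepsilon^2 \Delta\bigl(\tfrac34 \phi^n + \tfrac14 \phi^{n-2}\bigr).
\end{equation*}
Combining (\ref{nonlinear-1-1})--(\ref{nonlinear-1-2}) and using the operator identity $I - \mathcal{P} = \tfrac34\varepsilon^2\dt\,\mathcal{P}\Delta^2$, a direct computation gives $p^n + \dt q^n - \phi^n = \dt\,\mathcal{P}\Delta\tilde\mu^n$; the scheme (\ref{scheme-SAVN-1}) at the previous step gives $\phi^n - \phi^{n-1} = \dt\,\Delta \mu^{n-1/2}$. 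Noting $\phi^n - \phi^{*,n} = -\tfrac12(\phi^n - \phi^{n-1})$ and applying the identity $I-\mathcal{P}=\tfrac34\varepsilon^2\dt\,\mathcal{P}\Delta^2$ once more to insert a $\mathcal{P}$ in the second term, I obtain the decomposition
\begin{equation*}
\frac{p^n + \dt q^n + \phi^n}{2} - \phi^{*,n}
= \tfrac{\dt}{2}\,\mathcal{P}\Delta(\tilde\mu^n - \mu^{n-1/2}) - \tfrac{3\varepsilon^2 \dt^2}{8}\,\mathcal{P}\Delta^3 \mu^{n-1/2}.
\end{equation*}

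I then bound the two pieces in $L^2$. The second is immediately $O(\dt^2)$: $\mathcal{P}$ has $L^2$-operator norm at most one, and the uniform-in-$\dt$ $H^6$ bound of Lemma~\ref{prop: Hm est}, combined with $|\eta^{n-1/2}| \le 1+\dt^{1/2}$, yields $\|\Delta^3\mu^{n-1/2}\|\le\|\mu^{n-1/2}\|_{H^6}\le C$. For the first piece the key observation is that the $\tfrac34 \phi^n$ contributions cancel between $\tilde\mu^n$ and $\mu^{n-1/2}$, leaving
\begin{equation*}
\tilde\mu^n - \mu^{n-1/2} = [F'(\phi^{*,n}) - F'(\phi^{*,n-1})] + (1-\eta^{n-1/2})F'(\phi^{*,n-1}) - \tfrac{\varepsilon^2}{4}\Delta(\phi^{n-1}-\phi^{n-2}).
\end{equation*}
Each summand is $O(\dt)$ in $H^2$: the $F'$ difference by the standard Lipschitz/Sobolev-algebra bound together with $\|\phi^{*,n} - \phi^{*,n-1}\|_{H^2} \le C\dt$ (Lemma~\ref{prop: stab-time-1}) and the uniform $H^2$ bound of Lemma~\ref{prop: H2 est}; the factor $|1-\eta^{n-1/2}|$ by the rough estimate (\ref{eta est-prelim-2}); and the Laplacian increment by $\|\phi^{n-1}-\phi^{n-2}\|_{H^4}\le D_4\dt$ (again Lemma~\ref{prop: stab-time-1}). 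Hence $\|\Delta(\tilde\mu^n-\mu^{n-1/2})\|\le C\dt$ and the first piece is also $O(\dt^2)$, completing the estimate with $Q_0$ depending polynomially on $\varepsilon^{-1}$, $\gamma^{-1}$, and $A^*$.

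The main obstacle is that a naive Taylor expansion around the exact solution would only yield $O(\dt^{3/2})$ — the a-priori assumption (\ref{a priori-1}) does not control $\phi^n - \phi^{n-1}$ to better than $\dt^{3/2}$ through a direct comparison with $\Phi^n - \Phi^{n-1}$. The algebraic cancellation of the shared $\tfrac34 \phi^n$ term, which relies on the matching Crank--Nicolson weights in the current and previous step, bypasses this by reducing everything to consecutive-step increments of the numerical solution itself, estimated purely by Lemma~\ref{prop: stab-time-1}. A necessary prerequisite is that the Lagrange multiplier at the previous step already satisfies $|1-\eta^{n-1/2}|=O(\dt)$; this is precisely what (\ref{eta est-prelim-2}) provides, and is the reason that rough estimate is recalled immediately before the present lemma.
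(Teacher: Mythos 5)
Your proposal is correct and follows essentially the same route as the paper's proof: the paper writes the quantity as $\frac12\dt(\mathcal{P}-I)r^n+\frac12\bigl(\dt r^n-(\phi^n-\phi^{n-1})\bigr)$ with $r^n=\Delta\tilde\mu^n$, which is the same two-part decomposition as yours up to where the $I-\mathcal{P}=\frac34\varepsilon^2\dt\,\mathcal{P}\Delta^2$ identity is inserted, and it exploits exactly the same cancellation of the shared $\frac34\phi^n$ term via the previous-step scheme, controlled by the rough estimate \eqref{eta est-prelim-2}, the uniform $H^k$ bounds, and Lemma~\ref{prop: stab-time-1}. No gaps.
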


\begin{proof} 
A careful calculation reveals that 
\begin{eqnarray} 
  && 
  \frac{p^n + \dt q^n + \phi^n}{2} - \phi^{*,n} 
  = \frac{p^n - \phi^n + \dt q^n}{2} - \frac12 ( \phi^n - \phi^{n-1} )   \nonumber 
\\
  &=&
  \frac12 \dt ( I + \frac34 \varepsilon^2 \dt \Delta^2 )^{-1}  
  \Big( - \varepsilon^2  \Delta^2 ( \frac34 \phi^n + \frac14  \phi^{n-1} )  
    + \Delta F' (\phi^{*,n})  \Big) 
    - \frac12 ( \phi^n - \phi^{n-1} )   \nonumber 
\\
  &=& 
  \frac12 \dt \Big( ( I + \frac34 \varepsilon^2 \dt \Delta^2 )^{-1}  - I \Big) r^n  
  + \frac12 \Big( \dt r^n - (\phi^n - \phi^{n-1} ) \Big) ,     
    \label{lem 4-1-1}  
\\
  &&   \mbox{with} \, \, \, 
  r^n = - \varepsilon^2  \Delta^2 ( \frac34 \phi^n + \frac14  \phi^{n-1} )  
    + \Delta F' (\phi^{*,n})  , \nonumber \label{lem 4-1-2}    
\end{eqnarray} 
in which the representation formulas (\ref{nonlinear-1-2}), (\ref{lem 1-1-1-a}) have been applied. 

For the first part of (\ref{lem 4-1-1}), we see that $\Delta^2 r^n = - \varepsilon^2  \Delta^4 ( \frac34 \phi^n + \frac14  \phi^{n-1} )  + \Delta^3 F' (\phi^{*,n})$, and the following estimates could be derived by \eqref{Hm est-0} : 
\begin{eqnarray} 
  &&
   \| \Delta^4 \phi^n \| , \, \, \| \Delta^4 \phi^{n-1} \| \le C_8 ,   
   \label{lem 4-2-1}    
\\
  && 
  \| \Delta^3 F' (\phi^{*,n}) \| \le \| F' (\phi^{*,n}) \|_{H^6} 
  \le C (  \| \phi^{*,n} \|_{H^6}^3 +  \| \phi^{*,n} \|_{H^6} ) 
  \le C (C_6^3 + C_6) ,
  \label{lem 4-2-2}    
\end{eqnarray}    
in which the 3-D Sobolev embedding and H\"older inequality have been extensively applied, as well as the fact that $F' (\phi) = \phi^3 - \phi$. Then we arrive at 
\begin{eqnarray} 
  \| \Delta^2 r^n \| = \| - \varepsilon^2  \Delta^4 ( \frac34 \phi^n 
  + \frac14  \phi^{n-1} )  + \Delta^3 F' (\phi^{*,n}) \| 
  \le Q_1 := \varepsilon^2 C_8 + C (C_6^3 + C_6)  .  \label{lem 4-2-3}    
\end{eqnarray}   
This in turn leads to 
\begin{eqnarray*} 
    \Big\| \Big( ( I + \frac34 \varepsilon^2 \dt \Delta^2 )^{-1}  - I \Big) r^n \Big\| 
  =  \frac34 \varepsilon^2 \dt 
  \| ( I + \frac34 \varepsilon^2 \dt \Delta^2 )^{-1}  \Delta^2 r^n \|  
  \le \frac34 Q_1 \varepsilon^2 \dt  . \label{lem 4-2-4}   
\end{eqnarray*}   

For the second part of (\ref{lem 4-1-1}), we make use of the following expression of $\phi^n - \phi^{n-1}$ (which comes from the numerical scheme (\ref{scheme-SAVN-1})): 
\begin{align} 
   \phi^n - \phi^{n-1} =& \eta^{n-1/2} \dt \Delta F' (\phi^{*,n-1})  
  - \varepsilon^2  \dt \Delta^2 ( \frac34 \phi^n + \frac14 \phi^{n-2} )  ,  \quad 
  \mbox{so that}  \nonumber 
\\
  \dt r^n - (\phi^n - \phi^{n-1} ) 
  =&  ( \eta^{n-1/2} -1 ) \dt \Delta F' (\phi^{*,n-1})  
   + \dt \Delta ( F' (\phi^{*,n})  -  F' (\phi^{*,n-1}) )   \nonumber 
\\
  &
   + \frac14 \varepsilon^2  \dt \Delta^2 ( \phi^{n-1} - \phi^{n-2} )  .  
   \label{lem 4-3}      
\end{align} 
For the first part of the expansion (\ref{lem 4-3}),  from \eqref{H2 est-0} we have 
\begin{eqnarray}  
  \| \Delta F' (\phi^{*,n-1}) \| \le \| F' (\phi^{*,n-1}) \|_{H^2} 
  \le C (  \| \phi^{*,n-1} \|_{H^2}^3 +  \| \phi^{*,n-1} \|_{H^2} ) 
  \le C (C_2^3 + C_2) .  
    \label{lem 4-4-1}    
\end{eqnarray}    
Its combination with the preliminary rough estimate (\ref{eta est-prelim-2}) implies that 
\begin{eqnarray} 
   \|  ( \eta^{n-1/2} -1 ) \dt \Delta F' (\phi^{*,n-1})  \| 
   \le \frac{C A_{12} \dt^2 }{|S_n^*|}  (C_2^3 + C_2) .  \label{lem 4-4-2}    
\end{eqnarray}   
For the second part of the expansion (\ref{lem 4-3}), we begin with the following identify: 
\begin{eqnarray} 
  &&
   F' (\phi^{*,n})  -  F' (\phi^{*,n-1}) 
   = F'' (\xi^{(3)})(  \phi^{*,n} - \phi^{*,n-1})  ,   \nonumber 
 \\
   && \mbox{with} \, \, \, 
   F'' (\xi^{(3)}) =  (\phi^{*,n})^2  + \phi^{*,n} \phi^{*,n-1} + (\phi^{*,n-1})^2 - 1. 
    \label{lem 4-5-1}    
\end{eqnarray}  
In turn, extensive applications of H\"older inequality and Sobolev inequality indicate that 
\begin{eqnarray} 
  \| \Delta ( F' (\phi^{*,n})  -  F' (\phi^{*,n-1}) )  \| 
  &\le& C \|  F'' (\xi^{(3)}) \|_{H^2}^2 \| \phi^{*,n} - \phi^{*,n-1} \|_{H^2}  \nonumber 
\\
  &\le& 
  C \|  F'' (\xi^{(3)}) \|_{H^2}^2 ( \| \phi^n - \phi^{n-1} \|_{H^2}  
  +  \| \phi^{n-1} - \phi^{n-2} \|_{H^2} )    \nonumber 
\\
  &\le& 
  C (C_2^2 +1) \cdot D_2 \dt ,     
  \label{lem 4-5-2}    
\end{eqnarray}    
in which the $H^2$ estimate (\ref{H2 est-0}) and the discrete temporal derivative estimate (\ref{time-stability-1-bound}) have been applied in the last step. For the third part of the expansion (\ref{lem 4-3}), we make use of (\ref{time-stability-1-bound}) again, and get 
\begin{eqnarray} 
   \| \frac14 \varepsilon^2  \dt \Delta^2 ( \phi^{n-1} - \phi^{n-2} )  \| 
   \le \frac14 \varepsilon^2  \dt  \cdot D_4 \dt = \frac14 D_4 \varepsilon^2  \dt^2 . 
   \label{lem 4-6}    
\end{eqnarray} 
Therefore, a substitution of (\ref{lem 4-4-2}), (\ref{lem 4-5-2}) and (\ref{lem 4-6}) into (\ref{lem 4-3}) leads to the following estimate: 
\begin{equation} 
  \|  \dt r^n - (\phi^n - \phi^{n-1} )  \|  \le Q_2 \dt^2 ,  \quad \mbox{with} \, \, \, 
  Q_2 = \frac{C A_{12}}{|S_n^*|}  (C_2^3 + C_2) 
  + C (C_2^2 +1) D_2   + \frac14 D_4 \varepsilon^2 .  \label{lem 4-7}    
\end{equation}  

  Finally, a combination of (\ref{lem 4-1-1}), (\ref{lem 4-2-2}) and (\ref{lem 4-7}) yields the desired result (\ref{lem 4-0}), by taking $Q_0 = \frac38 Q_1 \varepsilon^2  + \frac12 Q_2$. This finishes the proof of Lemma~\ref{lem 4}.        
\end{proof} 

As a consequence, we are able to obtain a sharper estimate for $g(1)$, in comparison with Lemma~\ref{lem 2}. 

\begin{lem}  \label{lem 5} 
Under the assumptions of Lemma ~\ref{prop: H2 est},  we have $| g_n (1) | \le A_{18} \dt^3$, with $Q_3$ a constant independent of $\dt$, but dependent on the exact solution $\Phi$. 
\end{lem}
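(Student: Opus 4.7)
The plan is to replace the first-order Taylor argument of Lemma~\ref{lem 2}, which loses an $O(\dt)$ factor each time $F'(\xi^{(1)})$ is compared to $F'(\phi^{*,n})$, by a midpoint Taylor expansion which is exact through quadratic order. This gains one additional power of $\dt$, at the cost of having to compare the midpoint $\frac12(p^n + \dt q^n + \phi^n)$ with $\phi^{*,n}$, and this is precisely where Lemma~\ref{lem 4} enters.

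Concretely, I would start from
\[
g_n(1) = \int_\Omega \Big( F(p^n + \dt q^n) - F(\phi^n) - F'(\phi^{*,n})(p^n + \dt q^n - \phi^n) \Big)\, d\bx
\]
and apply the midpoint Taylor identity
\[
F(a) - F(b) = F'\!\Big(\tfrac{a+b}{2}\Big)(a-b) + \tfrac{1}{24}F'''(\zeta)(a-b)^3
\]
with $a = p^n + \dt q^n$ and $b = \phi^n$, so that the midpoint is $m^n := \frac12(p^n + \dt q^n + \phi^n)$. This gives
\[
g_n(1) = \int_\Omega \Big( F'(m^n) - F'(\phi^{*,n}) \Big)(p^n + \dt q^n - \phi^n)\, d\bx
+ \tfrac{1}{24}\int_\Omega F'''(\zeta)(p^n + \dt q^n - \phi^n)^3\, d\bx.
\]

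The cubic remainder is immediately $O(\dt^3)$: by \eqref{lem 2-4} we have $\|p^n + \dt q^n - \phi^n\|_{L^\infty} \le (A_1+A_3)\dt$, and $F'''(\phi) = 6\phi$ is uniformly bounded on the range of $\zeta$ by the $L^\infty$ bounds in Lemma~\ref{lem 1}. For the main term, I would apply Cauchy--Schwarz to obtain
\[
\Big| \int_\Omega (F'(m^n) - F'(\phi^{*,n}))(p^n + \dt q^n - \phi^n)\, d\bx \Big|
\le \|F'(m^n) - F'(\phi^{*,n})\| \cdot \|p^n + \dt q^n - \phi^n\|.
\]
Since $F'$ is locally Lipschitz with constant depending only on the $L^\infty$-norms of $m^n$ and $\phi^{*,n}$ (controlled by \eqref{lem 1-0-1}--\eqref{lem 1-0-2} and Lemma~\ref{prop: H2 est}), Lemma~\ref{lem 4} gives
\[
\|F'(m^n) - F'(\phi^{*,n})\| \le L \|m^n - \phi^{*,n}\| \le L Q_0 \dt^2,
\]
while $\|p^n + \dt q^n - \phi^n\| \le (A_1 + A_3)|\Omega|^{1/2}\dt$. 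Multiplying the two bounds yields the desired $O(\dt^3)$ estimate, and combining with the $O(\dt^3)$ bound on the cubic remainder completes the proof with $A_{18}$ a polynomial in $A^*$, $C_2$, $|\Omega|$, $Q_0$, and $\varepsilon^{-1}$.

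The only real subtlety is making sure the Lipschitz constant $L$ of $F'$ on the relevant range is uniform, which follows because $m^n$ and $\phi^{*,n}$ are both controlled in $L^\infty$ by the uniform-in-time bounds established in Lemmas~\ref{prop: H2 est}--\ref{prop: Hm est}; once that is in place, the gain of one extra power of $\dt$ is exactly the $O(\dt^2)$ improvement provided by Lemma~\ref{lem 4}. I would not expect any significant obstacle beyond keeping track of constants.
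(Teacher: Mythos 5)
Your proposal is correct and follows essentially the same route as the paper: the paper also splits $g_n(1)$ into the midpoint-Taylor remainder (written explicitly as $\tfrac{x+y}{8}(x-y)^3$ for the quartic $F$, which is exactly your $\tfrac{1}{24}F'''(\zeta)(a-b)^3$ term) plus the term comparing $F'$ at the midpoint $\tfrac12(p^n+\dt q^n+\phi^n)$ with $F'(\phi^{*,n})$, and it controls the latter via the intermediate value theorem together with the $O(\dt^2)$ bound of Lemma~\ref{lem 4}, just as you do. The only differences are cosmetic (the paper uses an $L^\infty$--$L^2$ H\"older bound where you invoke Cauchy--Schwarz plus a local Lipschitz constant), so no further comment is needed.
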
 

\begin{proof} 
  The expansion formula (\ref{lem 2-1}) of $g(1)$ indicates that 
\begin{align} 
  g_n (1) = J_1 + J_2 ,    \quad 
   & J_1 = \int_\Omega \, \Big( F (p^n + \dt q^n) - F (\phi^n)   
  - F' ( \frac{p^n + \dt q^n + \phi^n}{2} ) (p^n + \dt q^n - \phi^n) \Big) \, d \bx  ,  
  \nonumber 
\\
  & 
  J_2 =  
  \int_\Omega \, \Big( ( F' ( \frac{p^n + \dt q^n + \phi^n}{2} )  
  - F' (\phi^{*,n}) ) (p^n + \dt q^n - \phi^n) \Big) \, d \bx .  
  \label{lem 5-1}   
\end{align} 
For the $J_1$ part, we make use of the following identity 
\begin{equation*} 
\begin{aligned}  
  & 
   F (x) - F (y) - F' (\frac{x+y}{2}) (x-y) 
   = \frac{x+y}{8} ( x-y)^3 ,  \quad \forall x, y \in \mathrm{R} ,  
   \quad \mbox{so that}    
\\
  &  
   F (p^n + \dt q^n) - F (\phi^n)   
  - F' ( \frac{p^n + \dt q^n + \phi^n}{2} ) (p^n + \dt q^n - \phi^n)   
\\
  &
  = \frac18 (p^n + \dt q^n + \phi^n)  \cdot ( p^n - \phi^n + \dt q^n )^3 .  
\end{aligned} 
    \label{lem 5-2-2}      
\end{equation*}  
In turn, with an application of H\"older inequality, combined with the inequalities~(\ref{lem 1-0-1}), (\ref{lem 2-4}), we obtain 
\begin{eqnarray} 
  |   J_1 | 
  &\le& \frac18 \| p^n + \dt q^n + \phi^n \|_{L^\infty} \cdot 
  \| p^n - \phi^n + \dt q^n \|_{L^\infty}^3 \cdot | \Omega | \nonumber 
\\
  &\le& 
   \frac18 (A_2 + C C_2 + 1) (A_1 + A_3)^3 | \Omega | \dt^3 .   
  \label{lem 5-2-3}      
\end{eqnarray}   
For the $J_2$ part, we begin with the following application of intermediate value theorem: 
\begin{eqnarray*} 
   F' ( \frac{p^n + \dt q^n + \phi^n}{2} ) - F' (\phi^{*,n})  
   = F'' (\xi^{(4)}) \cdot ( \frac{p^n + \dt q^n + \phi^n}{2} - \phi^{*,n} ) ,   
   \label{lem 5-3-1}      
\end{eqnarray*}      
with $\xi^{(4)}$ between $\frac{p^n + \dt q^n + \phi^n}{2}$ and $\phi^{*,n}$. By the fact that $F'' (\phi) = 3 \phi^2 -1$ and the $L^\infty$ bounds for $\frac{p^n + \dt q^n + \phi^n}{2}$ and $\phi^{*,n}$, we get 
\begin{eqnarray*} 
  \| F'' (\xi^{(4)}) \|_{L^\infty}  \le C \Big( \| \frac{p^n + \dt q^n + \phi^n}{2} \|_{L^\infty}^2 
  + \| \phi^{*,n} \|_{L^\infty}^2 + 1 \Bigr) 
  \le C (A_2^2 + C_2^2 + 1) .  \label{lem 5-3-2}      
\end{eqnarray*}      
Then we arrive at 
\begin{eqnarray} 
  | J_2 | &\le& \| F'' (\xi^{(4)}) \|_{L^\infty}   
  \cdot \nrm{ \frac{p^n + \dt q^n + \phi^n}{2} - \phi^{*,n} }  
  \cdot  \| p^n + \dt q^n - \phi^n \|_{L^\infty} \cdot | \Omega |^{1/2}  \nonumber 
\\
  &\le& C (A_2^2 + C_2^2 + 1)   
  \cdot Q_0 \dt^2 \cdot ( A_1 + A_3 ) \dt  \cdot | \Omega |^{1/2}  \nonumber  
\\
  &=& C Q_0 | \Omega| (A_2^2 + C_2^2 + 1)  
  ( A_1 + A_3 ) \dt^3 ,   \label{lem 5-3-3}  
\end{eqnarray} 
in which the estimate (\ref{lem 4-0}) (in Lemma~\ref{lem 4}) has been applied in the second step. Therefore, a combination of (\ref{lem 5-1}), (\ref{lem 5-2-3}) and (\ref{lem 5-3-3}) yields the desired result, by taking $A_{18} = \frac18 (A_2 + C C_2 + 1) (A_1 + A_3)^3 | \Omega | + C Q_0 | \Omega| (A_2^2 + C_2^2 + 1)  ( A_1 + A_3 )$. This finishes the proof of Lemma~\ref{lem 5}. 
\end{proof} 

Next, we have to obtain a refined estimate for $| \eta^{n+1/2} -1|$. 
  
\begin{lem}  \label{thm: eta est} 
Under the assumptions of Lemma ~\ref{prop: H2 est},  we have $| \eta^{n+1/2} -1 | \le \frac{2 A_{18}}{\gamma} \dt^2$ with $A_{18}$ dependent on the exact solution $\Phi$. 
\end{lem}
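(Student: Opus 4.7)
The plan is to combine the refined value bound $|g_n(1)| \le A_{18} \dt^3$ from Lemma~\ref{lem 5} with the derivative lower bound $|g_n'(\eta)| \ge \frac{|S_n|}{2}\dt \ge \frac{\gamma}{2}\dt$ from Lemma~\ref{lem 3}, via a single application of the mean value theorem. Since Theorem~\ref{thm: solvability} already guarantees that the solution $\eta^{n+1/2}$ of $g_n(\eta)=0$ lies in the interval $[1-\dt^{1/2},\, 1+\dt^{1/2}]$, we may legitimately use the derivative bound on any point of the segment joining $\eta^{n+1/2}$ and $1$.

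Concretely, I would first write
\begin{equation*}
   -g_n(1) = g_n(\eta^{n+1/2}) - g_n(1) = g_n'(\xi)\,(\eta^{n+1/2}-1),
\end{equation*}
for some $\xi$ lying between $1$ and $\eta^{n+1/2}$, in particular $\xi \in [1-\dt^{1/2},\,1+\dt^{1/2}]$. Next I would invoke Lemma~\ref{lem 3} to obtain $|g_n'(\xi)| \ge \frac{|S_n|}{2}\dt$, then use the standing hypothesis $|S_n|\ge \gamma$ to convert this into $|g_n'(\xi)| \ge \frac{\gamma}{2}\dt$.

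Rearranging and applying Lemma~\ref{lem 5} gives
\begin{equation*}
   | \eta^{n+1/2} - 1 | \;=\; \frac{|g_n(1)|}{|g_n'(\xi)|} \;\le\; \frac{A_{18}\,\dt^3}{\tfrac{\gamma}{2}\dt} \;=\; \frac{2 A_{18}}{\gamma}\,\dt^2,
\end{equation*}
which is the desired bound.

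There is essentially no obstacle here beyond bookkeeping: all the heavy lifting has already been done in Lemmas~\ref{lem 3} and~\ref{lem 5}, and the localization to $[1-\dt^{1/2},\,1+\dt^{1/2}]$ was secured by Theorem~\ref{thm: solvability}. The only mild subtlety is ensuring that $\dt$ is small enough so that the smallness conditions of Lemmas~\ref{lem 3} and~\ref{lem 5} are satisfied simultaneously (in particular $2\dt^{1/4} \le \gamma/2$, which is guaranteed by the theorem's hypothesis $\dt \le \gamma^4/4$ up to adjusting constants), but this has already been absorbed into the standing assumptions on $\dt$.
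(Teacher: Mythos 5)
Your proposal is correct and follows essentially the same route as the paper: both arguments combine the refined bound $|g_n(1)|\le A_{18}\dt^3$ from Lemma~\ref{lem 5} with the derivative lower bound $|g_n'(\eta)|\ge \frac{\gamma}{2}\dt$ on $[1-\dt^{1/2},1+\dt^{1/2}]$ from Lemma~\ref{lem 3}, the paper packaging this as a sign change of the monotone function $g_n$ over $(1-\delta^{**},1)$ with $\delta^{**}=\frac{2A_{18}}{\gamma}\dt^2$ while you phrase it as a single mean value theorem quotient. The two are interchangeable, and your localization of $\xi$ via Theorem~\ref{thm: solvability} is exactly the point the paper relies on implicitly.
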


\begin{proof} 
  Again, it is assumed that  $g_n (1) > 0$, without loss of generality. Following the proof of Theorem~\ref{thm: solvability}, we see that 
\begin{eqnarray*} 
   0 \le g_n (1) \le A_{18} \dt^3 ,  \quad 
   g'_	n (\eta) \ge \frac{\gamma}{2} \dt ,  \, \, \, 
   \mbox{for $1 - \dt^{1/2} \le \eta \le 1 + \dt^{1/2}$}  . 
   \label{eta est-1} 
\end{eqnarray*} 
Then we conclude that, $g_n (\eta)$ is monotone and increasing over the interval $(1 - \delta^{**}, 1)$, with $\delta^{**} =  \frac{2 A_{18}}{\gamma} \dt^2$, so that 
\begin{eqnarray*} 
  g_n(1) \ge 0 , \, \, \, g_n (1 - \delta^{**}) \le g_n(1) - \frac{\gamma}{2} \dt \cdot \delta^{**} 
   \le A_{18} \dt^3  -   A_{18} \dt^3 = 0 .  \label{eta est-2} 
\end{eqnarray*}  
Therefore, there is a unique solution for $g_n (\eta) =0$ over the interval $(1 - \delta^{**}, 1)$. This finishes the proof of Theorem~\ref{thm: eta est}.    
\end{proof}

\subsection{Proof of Theorem 4.1}

First of all, it is clear that both the exact solution $\Phi(\cdot,t)$ and the numerical solution $\{\phi^k\}$ are mass conservative: 
\begin{eqnarray} 
   \overline{\Phi^k} = \overline{\Phi^0} ,  \quad  \overline{\phi^k} = \overline{\phi^0} ,  
   \, \, \, \forall k \ge 0 ,    
   \label{mass conserv-1} 
\end{eqnarray} 
in which $\Phi^k$ is the exact solution evaluated at the time instant $t^k$. Then we conclude that the numerical error function must have zero-average at each time step: 
\begin{eqnarray} 
  \overline{e^k} = 0 ,  \, \, \, \forall k \ge 0 ,  \quad 
  \mbox{by taking $\phi^0 = \Phi^0$} . 
\end{eqnarray} 
In turn, the following elliptic regularity estimates are available: 
\begin{eqnarray} 
  \| e^k \|_{H^{2m}} \le C \| \Delta^{m} e^k \|  ,  \quad \mbox{for $m = 1, 2, 3$}, 
  \quad \forall k \ge 0.  \label{mass conserv-3} 
\end{eqnarray}

First, the {\it a priori} assumption \eqref{a priori-1} is made. As a result, the unique solvability for $\eta^{n+1/2}$ (in the range of $1 - \dt^{1/2} \le \eta^{n+1/2} \le 1 + \dt^{1/2}$) is assured by Theorem~\ref{thm: solvability}, and a refined estimate in Lemma~\ref{thm: eta est} becomes available. 

The following truncation error analysis for the exact solution can be obtained by using a straightforward Taylor expansion: 
\begin{equation}  
  \frac{\Phi^{n+1} - \Phi^n}{\dt} = \Delta \Bigl( F' (\Phi^{*,n}) 
  - \varepsilon^2  \Delta ( \frac34 \Phi^{n+1} + \frac14 \Phi^{n-1} ) \Bigr) 
  + \tau^n ,   \quad \Phi^{*,n} = \frac32 \Phi^n - \frac12 \Phi^{n-1} , 
   \label{consistency-1}   
\end{equation} 
with $\| \Delta \tau^n \| \le C \dt^2$. 

In turn, subtracting the numerical scheme~(\ref{scheme-SAVN-1}) from the consistency estimate~(\ref{consistency-1}) yields
\begin{eqnarray}  
  \frac{e^{n+1} - e^n}{\dt} &=& (1 - \eta^{n+1/2} ) \Delta  F' (\Phi^{*,n})  \nonumber 
\\
  && 
  + \Delta \Bigl( \eta^{n+1/2} ( F' (\Phi^{*,n}) - F' (\phi^{*,n})  ) 
  - \varepsilon^2  \Delta ( \frac34 e^{n+1} + \frac14 e^{n-1} ) \Bigr) 
  + \tau^n .    
   \label{error equation-1}   
\end{eqnarray} 
Taking an inner product with (\ref{error equation-1}) by $2 \Delta^4 e^{n+1}$ gives 
\begin{equation}  
\begin{aligned} 
   &
  \| \Delta^2 e^{n+1} \|^2 - \| \Delta^2 e^n \|^2 +  \| \Delta^2 ( e^{n+1} - e^n ) \|^2  
  +  \varepsilon^2  \dt \Bigl( \Delta^3 ( \frac32 e^{n+1} + \frac12 e^{n-1} ) , 
    \Delta^3 e^{n+1} \Bigr)  
\\
  =& 2 (1 - \eta^{n+1/2} ) \dt ( \Delta^2  F' (\Phi^{*,n}) , \Delta^3 e^{n+1} ) 
  + 2 \eta^{n+1/2} \dt ( \Delta^2 ( F' (\Phi^{*,n}) - F' (\phi^{*,n}) ) , 
  \Delta^3 e^{n+1} )  
\\
  &
  + 2 \dt ( \Delta \tau^n ,  \Delta^3 e^{n+1} ) ,  
\end{aligned} 
   \label{convergence-1}   
\end{equation} 
in which the integration by parts have been extensively applied in the derivation. The surface diffusion term could be analyzed similarly  as in~(\ref{H2 est-2}): 
	\begin{eqnarray}
  &&(  \Delta^3 e^{n+1}  ,  \Delta^3 ( \frac32 e^{n+1} + \frac12 e^{n-1} ) ) 
   =  \frac32  \| \Delta^3 e^{n+1} \|^2   
   + \frac12  (  \Delta^3 e^{n+1}  , \Delta^3 e^{n-1} )
 	\nonumber  
	\\ 
&&\ge  \frac32  \|  \Delta^3 e^{n+1} \|^2  
- \frac12  \Big( \frac12  \| \Delta^3 e^{n+1} \|^2 
 +\frac12 \| \Delta^3 e^{n-1} \|^2  \Big)
	\nonumber 
	\ge  \frac54  \|  \Delta^3 e^{n+1} \|^2 -  \frac14 \| \Delta^3 e^{n-1} \|^2 .
	\label{convergence-2}    
	\end{eqnarray}
The local truncation error term could be bounded by the Cauchy inequality: 
\begin{eqnarray} 
  2 ( \Delta \tau^n ,  \Delta^3 e^{n+1} )  
  \le 2 \| \Delta \tau^n \| \cdot \|  \Delta^3 e^{n+1} \|  
  \le  \varepsilon^{-2} \| \Delta \tau^n \|^2 
  + \frac14 \varepsilon^2 \|  \Delta^3 e^{n+1} \|^2 . 
    \label{convergence-3}    
	\end{eqnarray}      

For the first nonlinear inner product, we recall the refined estimate, $| \eta^{n+1/2} -1 | \le Q_3 \dt^2$, as established in Lemma~\ref{thm: eta est}. In addition, the following estimate could be derived: 
\begin{eqnarray} 
   \Delta^2  F' (\Phi^{*,n}) 
   \le C ( \| \Phi^{*,n} \|_{H^4}^3 + \| \Phi^{*,n} \|_{H^4} ) 
   \le C ( (A^*)^3 + A^* ) , 
   \label{convergence-4-1}    
	\end{eqnarray}   
based on the fact that $F' (\phi) = \phi^3 - \phi$, and the bound~(\ref{exact-bound-1}) for the exact solution. Then we arrive at 
\begin{equation} 
\begin{aligned} 
  & 2 (1 - \eta^{n+1/2} )  ( \Delta^2  F' (\Phi^{*,n}) , \Delta^3 e^{n+1} )  
   \le 3  Q_3 \dt^2 \cdot \| \Delta^2  F' (\Phi^{*,n}) \| 
     \cdot \| \Delta^3 e^{n+1} \|   
\\
  &\le 
      C   ( (A^*)^3 + A^* ) Q_3 \dt^2  \| \Delta^3 e^{n+1} \|   
  \le
   C   ( (A^*)^3 + A^* )^2 Q_3^2 \varepsilon^{-2} \dt^4   
  + \frac14 \varepsilon^2 \|  \Delta^3 e^{n+1} \|^2 .  
\end{aligned} 
  \label{convergence-4-2}     
\end{equation}  
For the other nonlinear error term, we begin with the following application of intermediate value theorem: 
\begin{equation*} 
\begin{aligned}  
     F' (\Phi^{*,n}) - F' (\phi^{*,n}) 
     = F'' (\xi^{(5)}) e^{*,n},  \, \, \,  \mbox{with}  & \, \, \, 
      F'' (\xi^{(5)}) = (\Phi^{*,n})^2 + \Phi^{*,n} \phi^{*,n} + (\phi^{*,n})^2 -1 ,  
\\
   &  \, \, \, 
      e^{*,n} = \frac32 e^n - \frac12 e^{n-1} . 
\end{aligned} 
  \label{convergence-4-3}        
\end{equation*} 
This in turn leads to the following estimate: 
\begin{equation*} 
\begin{aligned} 
   \|  \Delta^2 ( F' (\Phi^{*,n}) - F' (\phi^{*,n}) )   \|  
    =& \|  \Delta^2 ( F'' (\xi^{(5)}) e^{*,n} )   \|  
   \le C ( \|  \Phi^{*,n} \|_{H^4}^2 + \|  \phi^{*,n} \|_{H^4}^2 + 1) 
   \cdot \|  e^{*,n} \|_{H^4}  
\\
   \le& 
   C ( (A^*)^2 + C_4^2 + 1) \cdot \|  \Delta^2 e^{*,n} \|  .   
\end{aligned} 
  \label{convergence-4-4}        
\end{equation*} 
Consequently, the following inequality is available: 
\begin{eqnarray} 
  && 
   2 \eta^{n+1/2} ( \Delta^2 ( F' (\Phi^{*,n}) - F' (\phi^{*,n}) ) , 
  \Delta^3 e^{n+1} )   
  \le 
    C ( (A^*)^2 + C_4^2 + 1)  \|  \Delta^2 e^{*,n} \|    
  \cdot \| \Delta^3 e^{n+1} \|    \nonumber  
\\
  &\le& 
    C ( (A^*)^2 + C_4^2 + 1)^2 \varepsilon^{-2}  \|  \Delta^2 e^{*,n} \|^2   
  + \frac14 \varepsilon^2 \| \Delta^3 e^{n+1} \|^2  \nonumber 
\\
  &\le& 
  C ( (A^*)^2 + C_4^2 + 1)^2 \varepsilon^{-2} 
  ( \|  \Delta^2 e^n \|^2  + \|  \Delta^2 e^{n-1} \|^2  ) 
  + \frac14 \varepsilon^2 \| \Delta^3 e^{n+1} \|^2  .
  \label{convergence-4-5}        
\end{eqnarray} 

Subsequently, a substitution of (\ref{convergence-2}), (\ref{convergence-3}), (\ref{convergence-4-2}) and (\ref{convergence-4-5}) into (\ref{convergence-1}) leads to 
\begin{equation*} 
\begin{aligned}   
  &
  \| \Delta^2 e^{n+1} \|^2 - \| \Delta^2 e^n \|^2 
  + \frac12 \varepsilon^2 \dt \|  \Delta^3 e^{n+1} \|^2 
  -  \frac14 \varepsilon^2 \dt \| \Delta^3 e^{n-1} \|^2  
\\
  \le& 
   C   ( (A^*)^3 + A^* )^2 Q_3^2 \varepsilon^{-2} \dt^5 
  +  Q_4 \varepsilon^{-2} 
  ( \|  \Delta^2 e^n \|^2  + \|  \Delta^2 e^{n-1} \|^2  )   
  + \varepsilon^{-2} \dt \| \Delta \tau^n \|^2 ,    
\end{aligned} 
   \label{convergence-5}   
\end{equation*} 
with $Q_4 = C ( (A^*)^2 + C_4^2 + 1)^2$. Therefore, an application of discrete Gronwall inequality yields the desired convergence estimate~(\ref{convergence-0}). 

Finally, we have to recover the   {\it a-priori}  assumption~(\ref{a priori-1}) at time step $t^{n+1}$ to close the induction argument. By the convergence estimate (\ref{convergence-0}), we get 
	\begin{equation*}
\| \Delta^2 e^{n+1} \| \le \hat{C} \dt^2 ,  \quad 
 \varepsilon^2 \dt \| \Delta^3 e^{n+1} \|^2  \le \hat{C}^2 \dt^4 , \, \, \, 
 \mbox{so that} \, \,  \| \Delta^3 e^{n+1} \| \le \hat{C} \varepsilon^{-1} \dt^{3/2} . 
  \label{a priori-4}
	\end{equation*}
Making use of the elliptic regularity estimate (\ref{mass conserv-3}), we have 
\begin{eqnarray*} 
  \| e^{n+1} \|_{H^4} \le C \| \Delta^2 e^{n+1} \|  \le Q_5 \hat{C} \dt^2 ,  \quad 
  \| e^{n+1} \|_{H^6} \le C \| \Delta^3 e^{n+1} \|  
  \le Q_6 \hat{C} \varepsilon^{-1} \dt^{3/2} .  \label{a priori-5} 
\end{eqnarray*} 
Therefore, the   {\it a-priori}  assumption~(\ref{a priori-1}) is valid at time step $t^{n+1}$, provided that 
\begin{equation*} 
  \dt \le \min \Big( ( Q_5 \hat{C} )^{-2} , Q_6^2 \hat{C}^2 \varepsilon^2,\frac{\gamma^4}{4} \Big) . 
  \label{a priori-6} 
\end{equation*}        
This finishes the complete induction argument for both the solvability analysis and convergence estimate. 

\section{Concluding remarks} \label{sec:conclusion} 
	The unique solvability and error analysis of the original Lagrange multiplier approach, proposed in \cite{ChengQ2020a} for gradient flows, is considered in this article. We first identified a necessary and sufficient  condition that  must be satisfied for the system of nonlinear algebraic equations, arising from  the original  Lagrange multiplier approach,  to admit a unique solution in the neighborhood of its exact solution. Afterward, we proposed a modified Lagrange multiplier approach to ensure  that the computation can continue even if the aforementioned condition was not satisfied. 
	
	However, a globally unique solvability of the nonlinear system of algebraic equations is not possible, due to the non-monotone feature of the implicit part in the numerical system.  Using the Cahn-Hilliard equation and a second-order modified Crank-Nicholson scheme with a Lagrange multiplier  as an example, we have provided a unique solvability analysis using  localized estimates in the neighborhood of its exact solution under  an   {\it a-priori}  assumption, and then carried out a rigorous error analysis, which in turn recovered the  {\it a-priori}  assumption using an inductive argument. 
	
	The results presented in this paper are  the first  unique solvability analysis and error estimate for the original  Lagrange multiplier approach. In a future work, we shall consider the theoretical analysis for a more general  Lagrange multiplier approach for gradient flows with global constraints proposed in \cite{ChengQ2020b}. Due to the multiple Lagrange multipliers involved in the latter approach, the analysis would be much more challenging, while the analytical techniques introduced in this paper may be helpful.

	\section*{Acknowledgements}
This work is supported in part by NSFC 12371409 (J. Shen), NSFC 12301522 (Q. Cheng) and  NSF DMS-2012269, DMS-2309548 (C.~Wang).

 \bibliographystyle{plain}
\bibliography{draft3}
\end{document}